\newtheorem{theorem}{Theorem}
\newtheorem{corollary}[theorem]{Corollary}
\newtheorem{lemma}[theorem]{Lemma}
\newtheorem{proposition}[theorem]{Proposition}
\newtheorem{remark}[theorem]{Remark}
\newtheorem{definition}[theorem]{Definition}
\newtheorem{claim}[theorem]{Claim}
\newtheorem*{rep@theorem}{\rep@title}
\newcommand{\newreptheorem}[2]{%
\newenvironment{rep#1}[1]{%
 \def\rep@title{#2 \ref{##1}}%
 \begin{rep@theorem}}%
 {\end{rep@theorem}}}
\DeclareMathOperator*{\argmax}{arg\,max}
\renewcommand{\P}[1]{\mathbb{P}\left(#1 \right)} 
\newcommand{\E}[1]{\mathbb{E}\left[ #1 \right]} 
\newcommand{\eps}{\varepsilon} 
\newcommand{\aln}[1]{\begin{align}  #1 \end{align}}
\newcommand{\alns}[1]{\begin{align*}  #1 \end{align*}}
\newcommand{\pbr}[1]{\left( #1 \right)} 
\newcommand{\sbr}[1]{\left[ #1\right]}
\newcommand{\cbr}[1]{\left\{ #1\right\}}
\newcommand{\T}{\mathcal{T}}
\newcommand{\A}{\mathcal{A}}
\newcommand{\G}{\mathcal{G}}
\renewcommand{\O}{\mathcal{O}}
\newcommand{\ov}{\mathsf{ov}}
\newcommand{\pihat}{\widehat{\pi}}
\newcommand{\pihatMO}{\widehat{\pi}_{\mathsf{MO}}}
\newcommand{\pistar}{\pi^*}
\newcommand{\mustar}{\mu^*}
\newcommand{\muhat}{\widehat{\mu}}
\newcommand{\muhatMO}{\widehat{\mu}_{\mathsf{MO}}}
\newcommand{\Mbar}{\overline{M}}
\newcommand{\mubar}{\overline{\mu}}
\newcommand{\id}{\mathsf{id}}
\newcommand{\tpi}{\widetilde{\pi}}
\newcommand{\tp}{\widetilde{p}}
\newcommand{\tG}{\widetilde{G}}
\newcommand{\tX}{\widetilde{X}}
\renewcommand{\deg}{\mathsf{deg}}
\newcommand{\core}{\mathsf{core}}
\newcommand{\dom}{\mathsf{dom}}
\newcommand{\ER}{\mathsf{ER}}
\newcommand{\CER}{\mathsf{CER}}
\newcommand{\WCER}{\mathsf{WCG}}
\newcommand{\SCER}{\mathsf{SCG}}
\newcommand{\calT}{\mathcal{T}}
\newcommand{\calN}{\mathcal{N}}
\newcommand{\calH}{\mathcal{H}}
\newcommand{\spl}{\lambda}
\newcommand{\B}{\mathcal{B}}
\definecolor{warningcol}{rgb}{.99,.1,.5}
\definecolor{todocol}{rgb}{.4,.4,.8}
\definecolor{sketchcol}{rgb}{.4,.4,.8}
\definecolor{outlinecol}{rgb}{.8,.4,.3}
\begin{document}

\title{Robust Graph Matching when Nodes are Corrupt}
\author{Taha Ameen and Bruce Hajek 
}
\affil{ \small University of Illinois Urbana-Champaign\\ Electrical and Computer Engineering and Coordinated Science Laboratory \\ Urbana, IL 61801, USA \\ e-mail: \texttt{tahaa3@illinois.edu, b-hajek@illinois.edu}}

\date{\today}

\markboth{Robust Graph Matching when Nodes are Corrupt}{}

\maketitle

\begin{abstract}
    Two models are introduced to investigate graph matching in the presence of corrupt nodes. The weak model, inspired by biological networks, allows one or both networks to have a positive fraction of molecular entities interact \textit{randomly} with their network. For this model, it is shown that no estimator can correctly recover a positive fraction of the corrupt nodes. Necessary conditions for any estimator to correctly identify and match all the uncorrupt nodes are derived, and it is shown that these conditions are also sufficient for the $k$-core estimator.
  
    The strong model, inspired by social networks, permits one or both networks to have a positive fraction of users connect \textit{arbitrarily}. For this model, detection of corrupt nodes is impossible. Even so, we show that if only one of the networks is compromised, then under appropriate conditions, the maximum overlap estimator can correctly match a positive fraction of nodes albeit without explicitly identifying them.
\end{abstract}

\tableofcontents

\section{Introduction} \label{sec: introduction}
Graph matching is the problem of finding the latent correspondence between two edge-correlated networks. It is a ubiquitous problem in machine learning and finds applications in social networks~\cite{narayanan2009deanonymizing, narayanan2008robust}, biological networks~\cite{singh2008global, kazemi2016proper}, natural language processing~\cite{haghighi2005robust} and computer vision~\cite{schellewald2005probabilistic}. Over a decade of progress has led to a sound understanding of the fundamental limits of graph matching in the case of correlated Erd{\H{o}}s-R{\'e}nyi graphs; an overview is provided in~\Cref{subsec: related-work}. 

Graph matching is a noisy version of the graph isomorphism problem, and is motivated by real-world networks often being correlated but non-identical. For instance, the interaction graphs of two social networks (such as Twitter and Flickr) are correlated because users are likely to connect with the same people in both networks. Indeed, it was shown in~\cite{narayanan2009deanonymizing} that the identities of some nodes in the Twitter graph, despite being anonymized, could be recovered simply by matching to the Flickr network. Another example is protein-protein interaction (PPI), where the interactome of an organism is constructed by connecting two interacting proteins with an edge. The interactomes of two closely related species are then correlated through a latent correspondence. Matching these interactomes allows the identification of conserved functional components between the two species~\cite{singh2008global, bandyopadhyay2006systematic}. 

All these networks are more complicated than correlated Erd{\H{o}}s-R{\'e}nyi graphs, and so designing robust algorithms is paramount in practice. In a sense, algorithms for graph matching may themselves be viewed as robust algorithms for graph isomorphism, with the extent of robustness quantified through tolerance to \textit{edge-corruptions}. In the present work, it is argued that robustness towards \textit{node-corruptions} is also an important factor to consider when designing algorithms. For instance, a user's Twitter account may get hacked, causing them to connect and disconnect arbitrarily with other users. Similarly, a protein in a PPI network may interact randomly with other proteins due to a variety of factors. For example, the popular \emph{Yeast two-hybrid} method constructs a PPI network by pairwise examining the interaction between two proteins by fusing them both to a transcription binding domain in the yeast cell~\cite{fionda2019networks}. However, if one of the proteins is itself an unknown transcription factor, then false positive interactions may be recorded. Conversely, if it fails to express, or is toxic to the cell, or requires post-translational modifications that do not take place in yeast cells, then false negatives can occur~\cite{koh2012analyzing}. 

These phenomena are better captured by node corruptions than edge corruptions. Here, the number of corrupted node pairs may even be quadratic in the size of the graph, but there is a spatial clustering of the noise: each corrupted edge has at least one end point in a subset of corrupted nodes.

\paragraph{Contributions} To our knowledge, this is the first work to consider fundamental limits of graph matching with node-corruptions. Two models are studied:
\begin{enumerate}
    \item \textit{Weakly corrupted graphs} ($\WCER$): The adversary selects a random set of nodes in each network and resamples all the edges adjacent to the set without observing the graphs. This models random behavior of unknown proteins in a PPI network.
    \item \textit{Strongly corrupted graphs} ($\SCER$): The adversary selects an arbitrary set of nodes in each network and rewires all the edges adjacent to the set after observing the graphs. This models malicious behavior of hacked users in social networks.
\end{enumerate}

For the $\WCER$ model, we show that no estimator correctly matches any positive fraction of corrupted nodes. Conversely, under appropriate conditions, the $k$-core estimator correctly matches almost all of the uncorrupted nodes and none of the corrupted nodes. Under a further condition that is also necessary, it identifies the corrupted nodes and correctly matches all the uncorrupted nodes. Our simulations suggest that there is a gap between these fundamental limits and the performance of commonly used computationally feasible algorithms.

For the $\SCER$ model, we show that an analogous detection of corrupted nodes is impossible. Even so, when only one of the networks is corrupted, the maximum overlap estimator outputs a matching that correctly matches a positive fraction of the uncorrupted nodes. An explicit lower bound on the fraction of correctly matched nodes as a function of the fraction of corrupted nodes is also derived.

\subsection{Related work} \label{subsec: related-work}
The problem of finding necessary and sufficient conditions for matching correlated random graphs was considered in~\cite{pedarsani2011privacy}. Ever since, a growing line of work has improved these results for exact recovery~\cite{cullina2016improved,cullina2017exact}, almost-exact recovery~\cite{cullina2019kcore, wu2022settling} and partial recovery~\cite{hall2023partial, ganassali2021impossibility, ding2022densesubgraph}. In parallel, other works have investigated computationally feasible algorithms~\cite{barak2019nearly, ding2021degreeprofile, fan2022spectral, mao2021loglog}, culminating in algorithms that run provably well in polynomial time when the graphs are far from isomorphic~\cite{mao2023constant,mao2023chandelier,ding2023polynomial}.

All these works study correlated Erd{\H{o}}s-R{\'e}nyi graphs, for which the fundamental limits of achievability and impossibility are now well understood. Subsequently, an emerging line of work is expanding the scope of the problem. For instance,~\cite{racz2021correlated}~and~\cite{gaudio2022exact} study the graph matching problem in correlated stochastic block models, and~\cite{racz2023matching} studies information theoretic limits of graph matching in inhomogeneous random graphs.

Recently, there is growing interest in studying robust variants of estimation problems in graphs when a positive fraction of nodes are corrupted. For example,~\cite{acharya2022robust} studies the problem of estimating the parameter $p$ of an Erd{\H{o}}s-R{\'e}nyi graph in the presence of such an adversary. More recently,~\cite{liu2022minimax}~and~\cite{hua2023reaching} study the community detection problem when nodes are corrupted. Finally, the model in~\cite{mitzenmacher2018reconciling} allows for a simpler version of node corruptions in graph matching, but studies worst-case performance when a sublinear fraction of nodes are corrupted. All these results provide insight into robustness of algorithms, and facilitate development of algorithms better suited for real-world networks.

\section{Preliminaries} \label{sec: preliminaries}

\paragraph{Notation} 
Let $[n]$ denote the set $ \{1,2,\cdots,n\}$ and let $\binom{[n]}{2}$ denote the set of unordered pairs $ \{\{u,v\}: u,v \in [n] \text{ and } u \neq v\}$. For a graph $G$ on $n$ nodes, assume that its node set $V(G)$ is $[n]$, and so its edge set $E(G)$ is a subset of $\binom{[n]}{2}$. In this work, graphs are undirected and unweighted, so denote $G\{i,j\}= 1$ if $\{i,j\} \in E(G)$ and $0$ otherwise. The graph $G$ is sampled from the Erd{\H{o}}s-R{\'e}nyi distribution, denoted $G \sim \mathsf{ER}(n,p)$, if $G$ has $n$ nodes and each edge in $G$ exists with probability $p$. Let $\pi$ be a permutation on $[n]$ and denote by $G^\pi$ the graph obtained by relabeling nodes in $G$ according to $\pi$, so that
\alns{ 
G\{i,j\} = G^{\pi}\cbr{\pi(i),\pi(j)} \ \forall \cbr{i,j}\in\binom{[n]}{2}.
}
Standard asymptotic notation ($O(\cdot), o(\cdot), \Theta(\cdot), \cdots$) is used throughout, and it is implicit that $n\to\infty$.

In this work, $\mathsf{Bern}(p)$ and $\mathsf{Bin}(n,p)$ denote respectively the Bernoulli and binomial distribution. The hypergeometric distribution is denoted by $\mathsf{HypGeom}(n,k,m)$. A random variable with this distribution counts the number of successes in a sample of $k$ elements drawn without replacement from a population of $n$ individuals, of which $m$ elements are considered successes.

\subsection{Correlated graphs and corruption models}
In all definitions below, $n$ is a positive integer and $p,s$ are in $[0,1]$. Further, $G_1$ and $G_2$ are graphs with $V(G) = [n]$ and $\pistar$ is a permutation on $[n]$. 

\begin{definition}[$\CER$ model] The tuple $(G_1,G_2,\pistar)$ is sampled from the correlated Erd{\H{o}}s-R{\'e}nyi distribution $\CER(n,p,s)$ if two graphs $G_1$ and $G_2'$ are obtained by independently subsampling each edge of a parent graph $G \sim \ER(n,p)$ with probability $s$. Independently, a permutation $\pistar$ is sampled uniformly at random, and $G_2$ is obtained as $G_2 = G_2'^{\pistar}$.
\end{definition}

Marginally, $G_1$ and $G_2$ each follow the $\mathsf{ER}(n,ps)$ distribution. However, the two graphs are edge-wise correlated according to a latent permutation $\pistar$. Next, two models of corruption are presented, motivated respectively by applications in protein-protein interaction and social network de-anonymization.

\begin{definition}[$\WCER$ and $\SCER$ Models]
    Let $(G_1,G_2,\pistar)$ be a sample from $\CER(n,p,s)$. Let $\gamma$ and $\spl$ be in $[0,1]$. Consider an adversary that selects two sets of nodes $\B_1 \subseteq V(G_1)$ and $\B_2 \subseteq V(G_2)$ such that $|\B_1| = \spl \gamma n$ and $|\B_2| = (1\!-\!\spl)\gamma n$. Let $\mathcal{E}_{\B_1}$ (resp. $\mathcal{E}_{\B_2}$) denote all the node pairs adjacent to $\B_1$ (resp. $\B_2$) in $\binom{[n]}{2}$:
    \alns{ 
    \mathcal{E}_{\B_1} &=\cbr{ \cbr{i,j} \in \binom{[n]}{2} : i \in \B_1 \text{ or } j \in \B_1}, \\
    \mathcal{E}_{\B_2} &=\cbr{ \cbr{i,j} \in \binom{[n]}{2} : i \in \B_2 \text{ or } j \in \B_2}.    
    }
    \begin{itemize}
        \item $\WCER$ model: The weak adversary selects $\B_1$ and $\B_2$ uniformly at random and independent of $G_1$ and $G_2$. It then assigns the edge status of each node pair in $\mathcal{E}_{\B_1}$ and $\mathcal{E}_{\B_2}$ independently from the $\mathsf{Bern}(ps)$ distribution. The corrupted graphs are denoted $\tG_1$ and $\tG_2$. The tuple $(\B_1,\B_2,\tG_1,\tG_2,\pistar)$ is said to be distributed according to $\WCER(n,p,s,\gamma,\spl)$.

        \item $\SCER$ model: A strong adversary $\mathsf{A}$ is any rule to select the sets $\B_1$ and $\B_2$, and the edge status of all node pairs in $\mathcal{E}_{\B_1}$ and $\mathcal{E}_{\B_2}$. The corrupted graphs are denoted $\tG_1$ and $\tG_2$, and the tuple $(\B_1,\B_2,\tG_1,\tG_2,\pistar)$ is said to be distributed according to $\SCER(n,p,s,\gamma,\spl,\mathsf{A})$.
    \end{itemize}
\end{definition}
In words, the adversary corrupts a total of $\gamma n$ nodes, of which a fraction $\spl$ are in $G_1$ and the rest are in $G_2$. It then modifies the edge status of each node pair with at least one corrupted end point. Note that the $\WCER$ model defines a joint distribution on $(\B_1,\B_2,\tG_1,\tG_2,\pistar)$. In contrast, for the $\SCER$ model, one must explicitly define an adversary $\mathsf{A}$ to obtain a distribution on $(\B_1,\B_2,\tG_1,\tG_2,\pistar)$.


\subsection{Matchings and estimators} 

\begin{definition}[Matching]
    A \emph{matching} $\mu$ is an injective function with domain $\dom(\mu) \subseteq [n]$ and codomain $[n]$. 
\end{definition}

Note that permutations are matchings with domain equal to $[n]$. An \textit{estimator} $\mathcal{E}$ is a mapping that takes in a pair of corrupted graphs $(\tG_1,\tG_2)$ and outputs a matching $\mu$. In doing so, it attempts to recover the latent permutation $\pistar$ between the uncorrupted graphs $G_1$ and $G_2$. Two estimators that have been studied in the absence of any adversary are the maximum overlap estimator $\widehat{\mathcal{E}}_{\mathsf{MO}}$ and the $k$-core estimator $\widehat{\mathcal{E}}_k$. They are presented next using the following definition.

\begin{definition}[Intersection Graph]
    Let $H_1$ and $H_2$ be two graphs and let $\mu$ be a matching. The intersection graph $H_1 \wedge_{\mu} H_2$ is a graph with node set $\dom(\mu)$, such that for any two nodes $i,j \in \dom(\mu)$, the pair $\{i,j\}$ is an edge in $H_1\wedge_{\mu}H_2$ if and only if $\{i,j\}$ is an edge in $H_1$ and $\{\mu(i),\mu(j)\}$ is an edge in $H_2$.
\end{definition}

\paragraph{Maximum overlap estimator}
For two graphs $H_1$ and $H_2$, the maximum overlap estimator $\widehat{\mathcal{E}}_{\mathsf{MO}}(H_1,H_2)$ outputs a matching $\muhatMO$ that maximizes the number of edges in the corresponding intersection graph:
\alns{ 
\muhatMO \in \argmax_{\mu} |E(H_1 \wedge_{\mu} H_2)|.
}
The maximum overlap matching is the maximum likelihood estimator for exact recovery in the absence of the adversary, and is therefore optimal in that setting.

\paragraph{$k$-core estimator}
The $k$-core of a graph $G$, denoted $\core_k(G)$ is the largest set of vertices $A$ of $G$ such that the induced subgraph on $A$ has minimum degree at least $k$. For any two graphs $H_1$ and $H_2$ and non-negative integer $k$, a matching $\mu$ is said to be a \emph{$k$-core matching} of $H_1$ and $H_2$ if the minimum degree in $H_1 \wedge_{\mu} H_2$ is at least $k$. 

The \emph{$k$-core estimator} $\widehat{\mathcal{E}}_k(H_1,H_2)$ selects a $k$-core matching $\muhat_k$ such that $|\dom(\muhat_k)|$ is at least as large as $|\dom(\mu_k)|$, for any other $k$-core matching $\mu_k$.

\subsection{Recovery objectives}
For a matching $\mu$ and a permutation $\pistar$ on $[n]$, denote by $\ov(\mu,\pistar)$ the overlap between $\mu$ and $\pistar$, i.e. the number of nodes on which $\mu$ and $\pistar$ agree:
\alns{ 
\ov(\mu,\pistar) := \left|\cbr{ i \in \dom(\mu): \mu(i) = \pistar(i)}\right|.
}
Upon observing only the pair of corrupted graphs $(\tG_1,\tG_2)$, the objective is to find a matching $\muhat$ to maximize the overlap between $\muhat$ and the latent permutation $\pistar$.~\Cref{def: recovery} captures this notion. 

\begin{definition}[$\alpha$-recovery] \label{def: recovery}
Let $\alpha \in (0,1]$. An estimator that outputs a matching $\muhat$ is said to achieve
\begin{enumerate}
    \item[(i)] $\alpha$-recovery, if $\P{ \frac{\ov(\muhat,\pistar)}{n} \geq \alpha} = 1 - o(1).$
    \item[(ii)] almost $\alpha$-recovery, if for every $\eps > 0$, $$\P{ \frac{\ov(\muhat,\pistar)}{n} \geq \alpha - \eps } = 1 - o(1).$$
\end{enumerate}
\end{definition}


Graph matching is often a precursor to downstream tasks. Subsequently, an estimator is often useful in practice only if it correctly matches all the nodes in its domain. This concept is made rigorous through the notion of \emph{precision}.

\begin{definition}[Precision]
The \emph{precision} $\rho$ of a matching $\muhat$ is the fraction of the matching that is correct, i.e.
\alns{
\rho(\muhat) := \frac{\ov(\muhat,\pistar)}{|\dom(\muhat)|}.
}
\end{definition}

For a sequence of graph-pairs $(\tG_1,\tG_2)_{n}$ on $n$ vertices, an estimator $\mathcal{E}(\tG_1,\tG_2)$ that outputs a matching $\pihat$ is \emph{precise} if $\P{\rho(\pihat) = 1} = 1-o(1)$. For any $\eps > 0$, it is said to be $\eps$-imprecise if $\P{\rho(\pihat) \leq 1- \eps} = 1-o(1)$.

\section{Main Results} \label{sec: main-results}
    Impossibility and achievability results are presented separately for the $\WCER$ and $\SCER$ models. In all the results, $n$ is a positive integer and $p$, $s$, $\gamma$, $\spl$ are real numbers such that $p \in (0,1)$, $s \in (0,1]$, and $\gamma,\spl \in [0,1]$. For $(\B_1,\B_2,\tG_1,\tG_2,\pistar)$ from the $\WCER$ model, denote by $\B_2'$ the pre-image of $\B_2$ under $\pistar$, i.e. 
$$\B_2' := \{ i \in [n]: \pistar(i) \in\B_2\}.$$

\subsection{Results on the \texorpdfstring{$\WCER$}{WCER} Model}
Our first result is an impossibility result that holds for any estimator.
\begin{theorem} \label{thm: WCER-Impossibility}
Let $(\B_1,\B_2, \tG_1,\tG_2,\pistar)$ be distributed according to $\WCER(n,p,s,\gamma,\spl)$. Let $\mathcal{E}(\tG_1,\tG_2)$ be any estimator that returns a matching $\mu$. Let $\alpha^* = 1- \gamma + \spl(1-\spl)\gamma^2$.
\begin{enumerate}
    \item[(i)] If $\mathcal{E}$ is precise, then $$\P{\dom(\mu) \subseteq (\B_1\cup\B_2')^c} \!=\! 1-o(1).$$
    \item[(ii)] If $\mathcal{E}$ achieves almost $\alpha$-recovery, then $\alpha \leq \alpha^*$.
    \item[(iii)] Let $p = C\log(n)/n$ and $\spl \in \{0,1\}$. If $\mathcal{E}$ is precise and achieves $\alpha^*$-recovery, then $ C \geq 1/(s^2\alpha^*)$.
\end{enumerate} 
\end{theorem}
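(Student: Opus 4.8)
The plan rests on a single structural observation: in the $\WCER$ model the corruption resamples, from $\mathsf{Bern}(ps)$ and independently, every edge variable touching $\B_1$ in $\tG_1$ and every edge variable touching $\B_2$ in $\tG_2$, so no node pair meeting $\B_1\cup\B_2'$ carries any correlation between the two graphs. I would first make this quantitative as a \emph{posterior uniformity} lemma: conditioned on $(\tG_1,\tG_2)$ together with the side information $\big(\B_1,\B_2,\ \pistar|_{(\B_1\cup\B_2')^c},\ \pistar(\B_1\setminus\B_2'),\ \pistar(\B_1\cap\B_2')\big)$, the restriction $\pistar|_{\B_1\cup\B_2'}$ is uniform over bijections respecting the partition $\{\B_1\setminus\B_2',\,\B_2'\setminus\B_1,\,\B_1\cap\B_2'\}$ and its (revealed) image. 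One proves this by writing the likelihood $L(\pi)=\P{\tG_1,\tG_2\mid\pistar=\pi,\B_1,\B_2}$, marginalizing over $G,G_1,G_2'$, and checking that the $\pi$-dependence is confined to (a) the clean correlated sub-instance carried by pairs inside $W:=(\B_1\cup\B_2')^c$, and (b) ``half-noise'' Bernoulli factors that depend on $\pi$ only through the sets $\pi(\B_1)$ and $\pi(\B_2')=\B_2$; once the genie fixes $\pistar|_W$ and the two images, $L(\cdot)$ is constant over all $\pi$ consistent with the genie. (For $\spl\in\{0,1\}$ one of $\B_1,\B_2'$ is empty, and the lemma just says $\pistar|_{\B_1}$, resp. $\pistar|_{\B_2'}$, is a uniform bijection independent of the observations.)

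Given this lemma, parts (i) and (ii) are short. For (i): precision gives $\P{\ov(\mu,\pistar)=|\dom(\mu)|}=1-o(1)$, so on the event $D:=\dom(\mu)\cap(\B_1\cup\B_2')\neq\emptyset$ the injection $\mu|_D$ must coincide with $\pistar|_D$, which, given the observations and the genie, has probability at most $1/\min_P|P|=O(1/n)$ (the minimum over nonempty parts, using $\gamma\in(0,1]$); hence $\P{D\neq\emptyset,\ \ov(\mu,\pistar)=|\dom(\mu)|}=O(1/n)$, and combining with precision yields $\P{D\neq\emptyset}=o(1)$, i.e. $\dom(\mu)\subseteq(\B_1\cup\B_2')^c$ with probability $1-o(1)$. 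For (ii): injectivity of $\mu$ together with uniformity of $\pistar|_{\B_1\cup\B_2'}$ force $\E{\#\{i\in\dom(\mu)\cap(\B_1\cup\B_2'):\mu(i)=\pistar(i)\}}=O(1)$, which is $o(n)$ w.h.p.; since $\B_1$ and $\B_2'=\pistar^{-1}(\B_2)$ are independent uniform subsets of sizes $\spl\gamma n$ and $(1-\spl)\gamma n$, $|\B_1\cap\B_2'|$ concentrates at $\spl(1-\spl)\gamma^2 n$, so $|(\B_1\cup\B_2')^c|=\alpha^* n+o(n)$ and therefore $\ov(\mu,\pistar)\le\alpha^* n+o(n)$ w.h.p.; hence if $\mathcal{E}$ achieves almost $\alpha$-recovery then $\alpha\le\alpha^*$.

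For (iii), take $\spl=1$ (the case $\spl=0$ is symmetric), so $\B_2=\B_2'=\emptyset$, $|\B_1|=\gamma n$, and $\alpha^*=1-\gamma$. By (i), precision gives $\dom(\mu)\subseteq\B_1^c$ w.h.p., and then $\alpha^*$-recovery forces $\ov(\mu,\pistar)\ge(1-\gamma)n=|\B_1^c|\ge|\dom(\mu)|\ge\ov(\mu,\pistar)$; hence w.h.p. $\mu|_{\B_1^c}=\pistar|_{\B_1^c}$, i.e. $\mathcal{E}$ \emph{exactly} recovers $\pistar$ on the $(1-\gamma)n$ uncorrupted vertices. I would then reduce this to a clean graph-matching problem: revealing the genie $(\B_1,\pistar|_{\B_1})$, the observations split into the sub-instance $(\tG_1|_{\B_1^c},\tG_2|_{\pistar(\B_1^c)})$, which after relabeling is precisely a $\CER((1-\gamma)n,p,s)$ sample with latent permutation $\pistar|_{\B_1^c}$, plus the $\tG_1$-noise on $\mathcal{E}_{\B_1}$ and the $\tG_2$-edges incident to $\pistar(\B_1)$; the latter are exchangeable Bernoulli noise and, conditionally on the sub-instance, independent of $\pistar|_{\B_1^c}$, hence carry no information, so recovering $\pistar|_{\B_1^c}$ is no easier than exact recovery in $\CER((1-\gamma)n,p,s)$. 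Finally I would invoke the converse for $\CER$ exact recovery when $mps^2-\log m\to-\infty$ (cf.\ \cite{cullina2017exact}); self-containedly, in that regime the intersection graph $G_1\wedge_{\pistar}G_2\sim\ER(m,ps^2)$ has $\omega(1)$ isolated vertices w.h.p., and two such vertices $i,i'$ typically also satisfy $N_{G_1}(i)\cap\pistar^{-1}(N_{G_2}(\pistar(i')))=\emptyset=N_{G_1}(i')\cap\pistar^{-1}(N_{G_2}(\pistar(i)))$, so $\pistar\circ(i\,i')$ has the same intersection-edge count as $\pistar$; since the $\CER$ posterior depends on the data only through the overlap and is maximized exactly at the overlap-maximizers, the optimal estimator cannot distinguish the two, capping the success probability at $\tfrac12+o(1)$. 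Plugging $m=\alpha^* n$ and $p=C\log n/n$ gives $mps^2-\log m=(s^2\alpha^* C-1)\log n-\log\alpha^*$, which tends to $-\infty$ whenever $C<1/(s^2\alpha^*)$, contradicting exact recovery on $\B_1^c$; hence $C\ge 1/(s^2\alpha^*)$.

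The main obstacle I anticipate is the likelihood bookkeeping behind the core lemma for general $\spl$ — pinning down exactly which functionals of $\pistar$ must be released so that $L(\cdot)$ is genuinely flat on the consistent set, rather than merely approximately flat — and, within (iii), two supporting facts: that two intersection-isolated vertices with disjoint ``cross-neighborhoods'' exist w.h.p.\ (a second-moment computation, needed only if the $\CER$ converse is kept self-contained rather than cited), and that the $\tG_2$-edges incident to $\pistar(\B_1)$ handed over in the reduction are truly conditionally independent of $\pistar|_{\B_1^c}$, which is a relabeling-of-exchangeable-noise argument.
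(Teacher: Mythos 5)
Your proposal takes essentially the same route as the paper's proof: for (i) and (ii) you exploit that nodes in $\B_1\cup\B_2'$ are informationally detached from the data (your posterior-uniformity/genie lemma is a more careful version of the paper's statistical-indistinguishability claim feeding into its random-guessing lemma), combine this with hypergeometric concentration of $|\B_1\cup\B_2'|$, and for (iii) you force exact recovery on $\B_1^c$ from precision plus $\alpha^*$-recovery and reduce to the $\CER(\alpha^* n,p,s)$ exact-recovery converse. Where you add value beyond the paper's write-up: you explicitly verify that, after revealing $(\B_1,\pistar|_{\B_1})$, the $\tG_2$-edges incident to $\pistar(\B_1)$ contribute only a $\pi$-independent noise factor to the likelihood (so the reduction to the clean sub-instance really is lossless, a step the paper states without justification), and you sketch the isolated-vertex second-moment converse in place of the paper's citation of \cite{cullina2017exact,wu2022settling}. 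Both of these are correct and fill genuine gaps in the exposition, but they do not change the underlying proof strategy.
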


Part (i) of~\Cref{thm: WCER-Impossibility} states that no precise estimator can correctly recover any of the corrupted nodes with high probability. Part (ii) precludes the possibility of almost $1$-recovery (and therefore also $1$-recovery) when $\gamma > 0$, in stark contrast to known achievability results in the absence of adversary. 

Next, we show that when the average degrees of $G_1$ and $G_2$ are logarithmic in the number of nodes $n$ (i.e. $p = C\log(n)/n$ for some positive constant $C$), the $k$-core estimator performs optimally for an appropriate choice of $k$. Specifically, we prove that there is a threshold $\tau \equiv \tau(s,\gamma, \spl)$ such that if $C > \tau$, then the $k$-core estimator identifies and matches all the uncorrupted nodes. Further, if $C < \tau$, then the estimator matches all but a vanishing fraction of the uncorrupted nodes and none of the corrupted nodes.

\begin{theorem} \label{thm: Achievability-WCER}
Let $C$ be a positive constant and suppose that $p  = C \log(n)/n$. Let $(\B_1,\B_2, \tG_1,\tG_2,\pistar)$ be distributed according to $\WCER(n,p,s,\gamma,\spl)$, and let $\muhat_k$ be the matching output by the $k$-core estimator $\widehat{\mathcal{E}}_k(\tG_1,\tG_2)$ with $k = \sqrt{\log n}$. Let $\alpha^* = 1-\gamma +\spl(1-\spl)\gamma^2$.
\begin{enumerate}
    \item[(i)] 
    $\widehat{\mathcal{E}}_k$ is precise.
    \item[(ii)] If $C > 1/(s^2\alpha^*)$, then $\widehat{\mathcal{E}}_k(\tG_1,\tG_2)$ achieves $\alpha^*$-recovery if $\spl \in \{0,1\}$, and achieves almost $\alpha^*$-recovery if $\spl \in (0,1)$. Further, 
    \aln{ 
    \P{\dom(\muhat_k) = (\B_1\cup\B_2')^c} = 1 - o(1).
    }
    \item[(iii)] If $C < 1/(s^2\alpha^*)$, then $\widehat{\mathcal{E}}_k(\tG_1,\tG_2)$ achieves almost $\alpha^*$-recovery for all $\spl \in [0,1]$. Further,
    \aln{ 
    \P{ \left|(\B_1\cup\B_2')^c \setminus \dom(\muhat_k)\right| = o(n)} = 1 \!-\! o(1). \label{eq: WCER-almost-card}
    }
\end{enumerate}
\end{theorem}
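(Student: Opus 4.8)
\emph{Strategy.} The plan is to pin down $\dom(\muhat_k)$ from both sides. Set $W:=(\B_1\cup\B_2')^c$, the nodes that are uncorrupted in $\tG_1$ and whose $\pistar$-image is uncorrupted in $\tG_2$. Since $\B_1$ and $\B_2'$ are independent uniform random subsets of $[n]$ of sizes $\spl\gamma n$ and $(1-\spl)\gamma n$, we have $|W|/n\to 1-\gamma+\spl(1-\spl)\gamma^2=\alpha^*$ in probability, and $|W|=(1-\gamma)n=\alpha^* n$ deterministically when $\spl\in\cbr{0,1}$ (one of $\B_1,\B_2'$ is then empty). No pair inside $W$ meets $\B_1$ and no pair inside $\pistar(W)$ meets $\B_2$, so those pairs escape corruption and $\tG_1\wedge_{\pistar|_W}\tG_2$ coincides with the true intersection graph $G_1\wedge_{\pistar|_W}G_2$, which by the $\CER$ construction is, conditionally on $(\B_1,\B_2,\pistar)$, an $\ER(|W|,ps^2)$ graph; write $\G_W$ for it. With $N:=|W|$ its edge probability is $ps^2=(Cs^2\alpha^*+o(1))\log N/N$, so the governing constant is $c:=Cs^2\alpha^*$, and the regimes in parts (ii) and (iii) are $c>1$ and $c<1$. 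The proof has two halves: (A) exhibit a large $k$-core matching supported on $W$, lower bounding $|\dom(\muhat_k)|$; (B) show that w.h.p.\ every $k$-core matching $\mu$ has $\dom(\mu)\subseteq W$ and $\mu=\pistar$ on $\dom(\mu)$, which upper bounds $\dom(\muhat_k)$, forces it into $W$, and yields precision.

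\emph{Half (A).} If $c>1$, a Chernoff bound on $\mathsf{Bin}(N-1,ps^2)$ shows that the expected number of vertices of $\G_W$ with degree at most $k=\sqrt{\log n}$ is $N^{1-c+o(1)}=o(1)$ (the slowly growing $k$ contributes only $e^{o(\log n)}$ through the $k!$ and $(Nps^2)^k$ factors); hence w.h.p.\ $\G_W$ has minimum degree exceeding $k$, so $\pistar|_W$ is a $k$-core matching of $(\tG_1,\tG_2)$ with domain $W$, giving $|\dom(\muhat_k)|\ge|W|$. If $c<1$ the same bound only yields $o(n)$ vertices of degree at most $k$, but a standard peeling estimate for $k$-cores of sparse Erd{\H{o}}s-R{\'e}nyi graphs with slowly growing threshold (cf.\ the $k$-core analyses of \cite{cullina2019kcore,wu2022settling}) shows that the $k$-core of $\G_W$ still has $|W|-o(n)$ vertices w.h.p.; restricting $\pistar$ to it is again a $k$-core matching, so $|\dom(\muhat_k)|\ge|W|-o(n)$.

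\emph{Half (B).} Fix a $k$-core matching $\mu$ and let $Q(\mu):=\cbr{i\in\dom(\mu): i\in\B_1\ \text{or}\ \pistar(i)\in\B_2\ \text{or}\ \mu(i)\ne\pistar(i)}$ be its set of imperfect nodes; we show $Q(\mu)=\emptyset$ w.h.p., uniformly in $\mu$. For $i\in Q(\mu)$ and any $j$, the event that $j$ is a neighbour of $i$ in $\tG_1\wedge_\mu\tG_2$ is \emph{accidental}: either $\indicator\cbr{\cbr{i,j}\in\tG_1}$ is a freshly resampled $\mathsf{Bern}(ps)$ variable (case $i\in\B_1$), or $\indicator\cbr{\cbr{\mu(i),\mu(j)}\in\tG_2}$ is freshly resampled (case $\mu(i)\in\B_2$, which includes $\mu(i)=\pistar(i)\in\B_2$), or---because $\mu(i)\ne\pistar(i)$---the two indicators are driven by distinct parent edges of $G$, the only exception being a ``swap'' in which $\mu$ transposes $\pistar(i)\leftrightarrow\pistar(j)$, of which each $i$ has at most one. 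Discarding the at most one swap edge per vertex, every edge of $\tG_1\wedge_\mu\tG_2$ incident to $Q(\mu)$ is then present with conditional probability at most $(ps)^2=(C\log n/n)^2$, and for fixed $i$ these events are independent over $j$ up to a bounded number of parent-edge collisions. A first-moment bound over pairs $(i,v)\in[n]^2$ shows that w.h.p.\ no vertex has more than $\log\log n$ such accidental neighbours, so each $i\in Q(\mu)$ has at least $k-\log\log n-1\ge k/2$ neighbours inside $Q(\mu)$; hence the subgraph of $\tG_1\wedge_\mu\tG_2$ induced on $Q(\mu)$ carries at least $|Q(\mu)|\,k/4$ accidental edges. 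Since a set hosting such a subgraph has size at least $k/2$, a union bound over the $\le 2^n$ choices of $Q(\mu)$ and the $\le n^{|Q(\mu)|}$ choices of $\mu$ on $Q(\mu)$, against the event that a fixed configuration carries $\ge|Q(\mu)|\,k/4$ accidental edges---an event of probability at most $\pbr{O(\log^2 n)/(nk)}^{|Q(\mu)|k/4}\le n^{-|Q(\mu)|k/8}$ for large $n$---is $o(1)$ once $k=\sqrt{\log n}$ exceeds a constant; so $Q(\mu)=\emptyset$ for all $k$-core matchings w.h.p.

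\emph{Conclusion.} On the intersection of the two high-probability events, $\muhat_k$ has $\dom(\muhat_k)\subseteq W$, $\muhat_k=\pistar$ on $\dom(\muhat_k)$, and $|\dom(\muhat_k)|\ge|W|-o(n)$ (with no loss if $c>1$), so $\ov(\muhat_k,\pistar)=|\dom(\muhat_k)|$. This gives $\rho(\muhat_k)=1$ (part (i)); if $c>1$ then $\dom(\muhat_k)=W=(\B_1\cup\B_2')^c$, so $\ov(\muhat_k,\pistar)/n=|W|/n$, which equals $\alpha^*$ exactly when $\spl\in\cbr{0,1}$ and converges in probability to $\alpha^*$ when $\spl\in(0,1)$, i.e.\ $\alpha^*$-recovery, resp.\ almost $\alpha^*$-recovery (part (ii)); and if $c<1$ then $|W\setminus\dom(\muhat_k)|=o(n)$, so $\ov(\muhat_k,\pistar)/n\ge|W|/n-o(1)\to\alpha^*$, i.e.\ almost $\alpha^*$-recovery for every $\spl$ together with \eqref{eq: WCER-almost-card} (part (iii)). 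The main obstacle is the union bound in Half (B): making the accidental-edge heuristic survive a sum over all matchings requires care in (i) controlling the parent-edge collisions that break exact independence, (ii) isolating the swap configurations, whose edges carry the larger probability $ps^2$ rather than $(ps)^2$, and (iii) checking that corrupted nodes---whose incident edges are freshly resampled---only contribute accidental edges; the choice $k=\sqrt{\log n}$ is exactly what beats the $\Theta(\log n)$ bits needed to specify $\mu$ on each node of $Q(\mu)$ while staying below the $\Theta(\log n)$ minimum degree of $\G_W$.
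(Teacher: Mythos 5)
Your outline is sound and takes a genuinely different route from the paper's proof of part (i). The paper's precision argument works with the scalar statistic $f(\mu)=\sum_{i:\,\mu(i)\neq\pistar(i)}\deg_{\tG_1\wedge_\mu\tG_2}(i)$ restricted to $\pistar$-maximal matchings (Lemmas~\ref{lem: suffices-to-check-maximal-matchings}--\ref{lem: size-of-calM}), counting at most $n^{2d}/d!$ matchings with $d$ errors, splitting $f(\mu)$ into transposition and non-transposition contributions, and running a Chernoff bound with an explicit $3$-coloring of the edge-dependency graph to restore independence (\Cref{thm: max-core-is-k-core}, \Cref{lem: xi-is-small}). You instead look at the induced subgraph on the ``imperfect'' set $Q(\mu)$ (corrupt or misplaced nodes), use a preprocessing first-moment bound to cap the number of accidental neighbours a vertex can have \emph{outside} $Q(\mu)$, conclude that every vertex of $Q(\mu)$ retains $\gtrsim k/2$ neighbours \emph{inside} $Q(\mu)$, and then union-bound over all subsets of size $q\ge k/2$ and all assignments of $\mu$ on them. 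This is a cleaner and more geometric way to see why $k=\sqrt{\log n}$ works: it sits between the $O(\log\log n)$ accidental-degree scale and the $\Theta(\log n)$ matching-entropy scale, exactly as you say at the end. A further structural advantage of your decomposition is that the event $\{Q(\mu)=\emptyset\text{ for every }k\text{-core matching }\mu\}$ simultaneously gives precision \emph{and} $\dom(\muhat_k)\subseteq (\B_1\cup\B_2')^c$, so you never need a separate step like the paper's ``no corrupt node has high degree'' half of \Cref{lem: k-core-degrees}; you only need the lower-bound half of parts (ii)/(iii), which your Half (A) handles exactly as the paper does (Chernoff on $\mathsf{Bin}(N-1,ps^2)$ for $c>1$; Łuczak-type $k$-core-size estimate for $c<1$, as in \Cref{prop: Luczak}).

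The gaps you flag are real, and two deserve emphasis. First, the ``no vertex has more than $\log\log n$ accidental neighbours'' statement only makes sense uniformly over $\mu$ if you restrict the count to neighbours $j$ with $\mu(j)=\pistar(j)$, since for such $j$ the accidental-edge indicator depends only on $(i,\mu(i))$ and is a sum of independent $\mathsf{Bern}(p^2s^2)$ terms; as written, it reads like a claim about \emph{all} accidental neighbours, which is false (for the worst $\mu$ a corrupt vertex can pick up $\Theta(\log n)$ accidental neighbours). You should also exclude the pairs $(i,v)=(i,\pistar(i))$ with $i\notin\B_1\cup\B_2'$ from the $n^2$-fold union, since those degrees are $\Theta(\log n)$; conditioning on $(\pistar,\B_1,\B_2)$ makes this legitimate. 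Second, the bound $\P{\text{fixed config has }\ge qk/4\text{ accidental edges}}\le\bigl(O(\log^2n)/(nk)\bigr)^{qk/4}$ is a Chernoff estimate that silently assumes the within-$Q(\mu)$ accidental indicators are independent, which they are not: two edges $\{i,j\}$ and $\{a,b\}$ can share a parent edge of $G$ exactly as in equations~\eqref{eq: Condition1}--\eqref{eq: Condition2}. You need the paper's $3$-coloring device (or a comparable sparse-dependency decoupling) here as well, not only in the ``swap'' subcase you single out. With those repairs, your exponent survives with only constant-factor degradation and the union bound $\sum_{q\ge k/2}\binom{n}{q}n^q\,n^{-\Omega(qk)}=o(1)$ still closes, so the approach is viable; but as written those two points are missing steps rather than details.
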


\subsection{Results on the \texorpdfstring{$\SCER$}{SCER} Model}

First, we show that in the $\SCER$ model, no estimator can be precise.
\begin{theorem} \label{thm: SCER-Impossibility}
   Suppose $\gamma > 0$. There exists an adversary $\mathsf{A}'$ for which the output $(\B_1,\B_2,\tG_1,\tG_2,\pistar)$ of $\SCER(n,p,s,\gamma,\spl, \mathsf{A}')$ satisfies the following: If an estimator $\mathcal{E}(\tG_1,\tG_2)$ returns a matching $\mu$ with $|\dom(\mu)| = \Theta(n)$, then $\mathcal{E}$ is $\eps^*$-imprecise, where $\eps^*$ is any real number such that $\eps^* < \frac{\max(\spl,1-\spl)\gamma}{2}$.
\end{theorem}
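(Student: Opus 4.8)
The plan is to let the adversary camouflage its corruption by a random relabeling, so that the pair it outputs is statistically indistinguishable from a clean $\CER$ instance whose latent permutation differs from $\pistar$ on a large set the estimator cannot locate. By the symmetry of the model under swapping the two networks (and replacing a matching $\mu$ by $\mu^{-1}$) I may assume $\spl\ge\tfrac12$, so that $m:=\spl\gamma n=\max(\spl,1-\spl)\gamma n=\Theta(n)$. I would take $\mathsf A'$ to be the adversary that, ignoring $G_1$ and $G_2$, draws $\B_1$ uniformly among the $m$-subsets of $[n]$ and $\sigma$ uniformly among the permutations of $[n]$ fixing $[n]\setminus\B_1$ pointwise, and sets $\tG_1:=G_1^{\sigma}$ (equivalently, $\tG_1\{i,j\}=G_1\{\sigma^{-1}(i),\sigma^{-1}(j)\}$); since $\sigma$ fixes $[n]\setminus\B_1$ this alters exactly the pairs in $\mathcal E_{\B_1}$, so it is a legitimate $\SCER$ adversary. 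The set $\B_2$ is chosen arbitrarily with $|\B_2|=(1-\spl)\gamma n$, and the adversary sets every pair in $\mathcal E_{\B_2}$ to its status in $G_2$, so that $\tG_2=G_2$.

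The crux is to show that $\mathsf A'$ hides itself completely. Put $\tpi:=\pistar\circ\sigma^{-1}$, $H_1:=G_1^{\sigma}$ and $H_2:=(G_2')^{\sigma}$; then $\tG_1=H_1$ and $\tG_2=G_2=(G_2')^{\pistar}=H_2^{\tpi}$, using $\tpi\circ\sigma=\pistar$. Since the correlated pair $(G_1,G_2')$ is obtained from an exchangeable parent by edge-independent subsampling, its law is invariant under simultaneously relabeling both coordinates; as $(\B_1,\sigma)$ is drawn without reference to the graphs, hence independently of $(G_1,G_2',\pistar)$, this yields $(H_1,H_2)\overset{d}{=}(G_1,G_2')$ with $(H_1,H_2)$ independent of $(\B_1,\sigma)$, and, since $\pistar$ is uniform and independent of $(G_1,G_2',\B_1,\sigma)$, the permutation $\tpi$ is, conditionally on $\sigma$, uniform on $S_n$ and independent of $(H_1,H_2)$. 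Putting these facts together shows that $(\tG_1,\tG_2,\tpi)$ has exactly the law of a $\CER(n,p,s)$ sample, that $\pistar=\tpi\circ\sigma$, and --- the fact I would rely on --- that
\[
(\B_1,\sigma)\ \text{is independent of}\ (\tG_1,\tG_2,\tpi).
\]
Informally: the data pins down the effective alignment $\tpi$ but reveals \emph{nothing} beyond the prior about the random shuffle $\sigma$ that relates $\tpi$ to the true $\pistar$. I expect the main difficulty to lie in this identification --- choosing the change of variables $\pistar\leftrightarrow(\tpi,\sigma)$ and verifying the independence by factoring the joint law, via exchangeability of $\ER$ --- while the remainder is elementary counting.

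To finish, fix an estimator $\mathcal E$, write $\mu=\mathcal E(\tG_1,\tG_2)$, and suppose $|\dom(\mu)|=\Theta(n)$. Conditioning on $(\tG_1,\tG_2,\tpi)$ (and on the internal coins of $\mathcal E$ if it is randomized), $\mu$ and each value $\tpi^{-1}(\mu(i))$ are determined, while $\B_1$ is still uniform among $m$-subsets of $[n]$ and $\sigma$ is uniform among permutations of $\B_1$. First, $|\dom(\mu)\cap\B_1|$ has a $\mathsf{HypGeom}(n,|\dom(\mu)|,m)$ law with mean $\spl\gamma\,|\dom(\mu)|$, so hypergeometric concentration gives $|\dom(\mu)\cap\B_1|\ge\spl\gamma\,|\dom(\mu)|-o(n)$ with probability $1-o(1)$. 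Second, a node $i\in\dom(\mu)\cap\B_1$ is matched correctly only if $\sigma(i)=\tpi^{-1}(\mu(i))$; this event forces $\tpi^{-1}(\mu(i))$ into $\B_1$ and then pins $\sigma(i)$ among the $m$ elements of $\B_1$, so its probability is at most $1/n$ for each $i$, whence $\E{|\{i\in\dom(\mu)\cap\B_1:\mu(i)=\pistar(i)\}|}\le|\dom(\mu)|/n\le1$, and by Markov's inequality this count is $o(n)$ with probability $1-o(1)$. Consequently the number of indices on which $\mu$ errs is at least $|\dom(\mu)\cap\B_1|-o(n)\ge\spl\gamma\,|\dom(\mu)|-o(n)$, so $\rho(\mu)=\ov(\mu,\pistar)/|\dom(\mu)|\le 1-\spl\gamma+o(1)$ with probability $1-o(1)$. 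Since $\eps^*<\spl\gamma/2<\spl\gamma=\max(\spl,1-\spl)\gamma$, this gives $\P{\rho(\mu)\le 1-\eps^*}=1-o(1)$, i.e.\ $\mathcal E$ is $\eps^*$-imprecise; the gap between $\spl\gamma$ and $\spl\gamma/2$ shows the stated constant is not tight, though I would not pursue that improvement.
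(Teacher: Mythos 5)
Your proof is correct and takes a genuinely different route from the paper's. The paper's adversary deterministically splits the fixed set $\B_1=\{1,\dots,\spl\gamma n\}$ into two halves $\B_{1,1}$, $\B_{1,2}$ and swaps them; it then argues informally that the swapped nodes are ``indistinguishable'' and cannot be matched with probability exceeding $1/2$, then combines this with a hypergeometric/Chebyshev bound on $|M\cap\B_{1,1}^c|$. Your adversary instead draws $\B_1$ uniformly and applies a uniformly random permutation $\sigma$ supported on $\B_1$, and the key step is an exact change of variables $\pistar\leftrightarrow(\tpi,\sigma)$ combined with the exchangeability of the $\CER$ parent graph: you show that the observed $(\tG_1,\tG_2)$ together with its \emph{effective} alignment $\tpi$ is distributed exactly as a clean $\CER(n,p,s)$ sample and is \emph{independent} of $(\B_1,\sigma)$. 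This makes the indistinguishability claim fully rigorous rather than heuristic, and it also buys a much sharper per-node bound ($\P{i\in\B_1,\ \sigma(i)=\tpi^{-1}(\mu(i))}\le 1/n$, versus the paper's $1/2$), which is why your argument in fact proves the stronger conclusion $\eps^*<\max(\spl,1-\spl)\gamma$ rather than $\eps^*<\max(\spl,1-\spl)\gamma/2$; since the stated threshold is weaker, the theorem as stated follows a fortiori. Two minor remarks: (1) the ``legitimacy'' check that $\sigma$ fixing $\B_1^c$ pointwise perturbs only pairs in $\mathcal{E}_{\B_1}$ is correct and worth keeping explicit; (2) you should state clearly the order-of-conditioning when invoking the hypergeometric law for $|\dom(\mu)\cap\B_1|$ and Markov's inequality for the correctly-matched count --- both are taken with respect to the remaining randomness in $(\B_1,\sigma)$ after conditioning on $(\tG_1,\tG_2,\tpi)$ and the estimator's coins, which is exactly the independence you established --- and then note that the $o(1)$ bounds hold uniformly over the conditioning event, so they also hold unconditionally; as written this is implicit but not hard to supply.
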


Thus, no estimator can identify a set $S \subset [n]$ containing a positive fraction of nodes, such that all nodes in $S$ are correctly matched. Despite this, we show that if only one of the networks is compromised, then imprecise recovery of a positive fraction of nodes is possible under appropriate conditions. We state two achievability results below.~\Cref{thm: SCER-Achievability-logdegree} deals with the case when average degrees are logarithmic in $n$, whereas~\Cref{thm: SCER-Achievability-lindegree} deals with the case when $p$ is constant.

\begin{theorem} \label{thm: SCER-Achievability-logdegree}
    Let $\spl \in \{0,1\}$ and $\alpha \in [0,1]$. If
    \aln{ 
    \gamma < \frac{s\pbr{1-\alpha^2}}{4},
    }
    then there exists a constant $C' \equiv C'(\alpha,\gamma)$ such that for all $C > C'$ and $p = C\log(n)/n$, and for all adversaries $\mathsf{A}$ and outputs $(\B_1,\B_2,\tG_1,\tG_2,\pistar)$ of $\SCER(n,p,s,\gamma,\spl,\mathsf{A})$, the maximum overlap estimator achieves $\alpha$-recovery.
\end{theorem}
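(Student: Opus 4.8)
The plan is a first-moment / union-bound-over-permutations argument: I will show that with high probability the latent permutation $\pistar$ produces strictly more common edges than every permutation that is $\alpha$-far from it, so the maximum overlap matching, which has at least as many common edges as $\pistar$, must be $\alpha$-close. By symmetry (swap the two graphs and invert $\pistar$) I may assume $\spl = 1$, so only $\B_1$ with $|\B_1| = \gamma n$ is corrupted, $\tG_2 = G_2 \sim \ER(n,ps)$, and $\tG_1$ agrees with $G_1$ on every pair inside $\mathcal{A} := \B_1^c$; since restricting the domain of a matching only deletes edges of its intersection graph, I may also assume $\muhatMO$ is a permutation.

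\textbf{Lower bound for the truth.} $|E(\tG_1 \wedge_{\pistar} \tG_2)|$ is at least the number of pairs $\{i,j\}\subseteq\mathcal{A}$ with $G_1\{i,j\}=1$ and $G_2\{\pistar(i),\pistar(j)\} = G_2'\{i,j\} = 1$, which is $\mathsf{Bin}\!\big(\binom{(1-\gamma)n}{2}, ps^2\big)$; a Chernoff bound gives $|E(\tG_1\wedge_{\pistar}\tG_2)| \ge (1-\delta)ps^2(1-\gamma)^2 n^2/2$ with probability $1 - e^{-\Omega(Cn\log n)}$, for any fixed $\delta > 0$.

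\textbf{Upper bound for a bad permutation.} Fix a permutation $\mu$ with $\ov(\mu,\pistar) = f < \alpha n$ and a valid corrupt set $\B_1$. Splitting pairs according to whether they lie in $\mathcal{E}_{\B_1}$ or in $\binom{\mathcal{A}}{2}$ and using $\tG_1\{e\}\le 1$ on $\mathcal{E}_{\B_1}$,
\[
|E(\tG_1 \wedge_\mu \tG_2)| \le X_\mu + Y_\mu, \quad X_\mu := \big|\big\{e : G_1\{e\}=1,\ G_2\{\mu(e)\}=1\big\}\big|,\quad Y_\mu := \sum_{e\in\mathcal{E}_{\B_1}}\indicator[G_2\{\mu(e)\}=1],
\]
a bound uniform over all adversaries. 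The key estimate is that, with $\nu := (\pistar)^{-1}\mu$, a pair $e=\{i,j\}$ contributes to $X_\mu$ with probability $ps^2$ when $\{\nu(i),\nu(j)\}=\{i,j\}$ but with probability only $p^2s^2 = o(ps^2)$ otherwise (here the $\CER$ structure via the parent graph is used), and the aligned case occurs for at most $\binom{f}{2}+n/2 \le \alpha^2 n^2/2 + o(n^2)$ pairs (both endpoints fixed by $\nu$, or the two points of a $2$-cycle of $\nu$); hence $\E{X_\mu} \le (1+o(1))ps^2\alpha^2 n^2/2$. Likewise $Y_\mu$ is exactly $\mathsf{Bin}(|\mathcal{E}_{\B_1}|, ps)$ with $|\mathcal{E}_{\B_1}| \le \gamma n^2$, so $\E{Y_\mu}\le ps\gamma n^2$.

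\textbf{Concentration, union bound, and conclusion.} Unconditionally $Y_\mu$ is a sum of independent Bernoullis, and $X_\mu$ is a sum of Bernoullis whose dependency graph has maximum degree $2$ (each term shares a variable with at most two others); a Chernoff bound for $Y_\mu$ and a Chernoff bound over a $3$-colouring of the dependency graph of $X_\mu$ give, for every fixed $\delta>0$,
\[
\P{X_\mu \ge (1+\delta)\tfrac{ps^2\alpha^2 n^2}{2} + o(n\log n)\ \text{ or }\ Y_\mu \ge (1+\delta)ps\gamma n^2} \le e^{-c(\delta)\, C n\log n}
\]
for some $c(\delta) = c(\delta,s,\gamma,\alpha) > 0$. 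There are at most $e^{O(n\log n)}$ pairs $(\mu,\B_1)$ with $\ov(\mu,\pistar)<\alpha n$ and $|\B_1|=\gamma n$, so choosing $C'=C'(\alpha,\gamma)$ large enough that $c(\delta)C'$ exceeds the implied constant makes the union bound close for all $C>C'$. Combining with the first two steps, with high probability every bad $\mu$ satisfies
\[
|E(\tG_1\wedge_\mu\tG_2)| \le (1+\delta)\Big(\tfrac{ps^2\alpha^2}{2} + ps\gamma\Big)n^2 + o(n\log n) < (1-\delta)\tfrac{ps^2(1-\gamma)^2}{2}n^2 \le |E(\tG_1\wedge_{\pistar}\tG_2)|,
\]
where the middle inequality holds for all small enough $\delta$ because $\gamma < s(1-\alpha^2)/4$ forces $\gamma(2+2s-s\gamma) < 4\gamma < s(1-\alpha^2)$, hence $2\gamma < s((1-\gamma)^2-\alpha^2)$, i.e. $ps^2\alpha^2 + 2ps\gamma < ps^2(1-\gamma)^2$ with a constant-factor gap. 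Therefore $\muhatMO$, which attains at least $|E(\tG_1\wedge_{\pistar}\tG_2)|$ common edges, cannot be bad, so $\ov(\muhatMO,\pistar)/n \ge \alpha$ with high probability, which is $\alpha$-recovery.

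\textbf{Main obstacle.} The crux is the union bound in the third step: it ranges over the $e^{\Theta(n\log n)}$ permutations, so the per-permutation deviation probability $e^{-\Theta(Cn\log n)}$ must dominate that count, which is only possible once $C$ exceeds a threshold — this is precisely why achievability is claimed only for $C > C'(\alpha,\gamma)$, and the threshold is pushed up because $\delta$ must first be chosen small enough to keep the strict gap in the final display. A secondary technicality is that $X_\mu$ is not a sum of independent indicators (it is a sum of products of subsampling coins and parent-edge indicators), which is why one either conditions on the parent graph or handles the degree-$2$ dependency explicitly; both routes cost only constants in the exponent.
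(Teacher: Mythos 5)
Your proof is correct, and it takes a recognizably different route from the paper's, though the skeleton (lower bound for the truth, union bound over bad permutations, large $C$ to beat the $e^{\Theta(n\log n)}$ count) is the same. The paper's proof passes through a structural inequality (their Lemma~\ref{lem: Z-relation}) of the form $\tX(\id)-\tX(\pi) > X(\id)-X(\pi)-Z$ with $Z$ a $\B_1$-independent quantity bounded by $2\gamma n\Delta_2$, keeps the \emph{full} lower bound $X(\id)\gtrsim \binom{n}{2}ps^2$, and then controls $X(\pi)$ via the MGF / edge-orbit decomposition (their Lemmas~\ref{lem: MGF}--\ref{lem: Xpi-decays}). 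You instead (a) bound $\tX(\mu)\le X_\mu + Y_\mu$ directly, with $Y_\mu$ an exact binomial count over $\mathcal{E}_{\B_1}$ (so you must union over $\B_1$, costing an extra $2^{O(n)}$ that is harmless once $C$ is large); (b) sacrifice the corrupted pairs in the lower bound for the truth, getting only $(1-\gamma)^2\binom{n}{2}ps^2$, which is cruder but still beats your upper bound under $\gamma < s(1-\alpha^2)/4$ with a constant-factor margin; and (c) handle the dependence inside $X_\mu$ by the degree-$2$ dependency-graph / $3$-colouring Chernoff trick rather than the MGF orbit factorisation — interestingly, that colouring device is exactly what the paper uses for the $\WCER$ achievability proof (Lemma~\ref{lem: xi-is-small}) but not here. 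Your algebra for the threshold is also a little different: you need $\gamma(2+2s-s\gamma) < s(1-\alpha^2)$, which is implied by (and in fact slightly weaker than) the stated $4\gamma < s(1-\alpha^2)$ since $2+2s-s\gamma\le 4$. The trade is that the paper's $Z$-relation and degree bound give a cleaner, adversary- and $\B_1$-uniform comparison, while your decomposition is more elementary and avoids introducing $\Delta_2$ and the orbit MGF at the cost of a coarser constant and a heavier union bound.
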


\begin{theorem} \label{thm: SCER-Achievability-lindegree}
Suppose $p$ is constant, $\spl \in\{0,1\}$ and $\alpha \in [0,1]$. If
\aln{ 
\gamma < 1- \sqrt{1-\frac{s^2p(1-p)(1-\alpha^2)}{2}}, \label{eq: condition-SCER-lin}
}
then for all adversaries $\mathsf{A}$ and outputs $(\B_1,\B_2,\tG_1,\tG_2,\pistar)$ of $\SCER(n,p,s,\gamma,\spl,\mathsf{A})$, the maximum overlap estimator achieves $\alpha$-recovery.
\end{theorem}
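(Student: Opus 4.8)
The plan is to lower bound $\ov(\muhatMO, \pistar)$ by a counting argument that compares $\muhatMO$ against the ground-truth permutation $\pistar$ itself, exploiting the defining optimality of the maximum overlap estimator. Fix $\spl = 1$ for concreteness (the $\spl = 0$ case is symmetric), so only $G_1$ is corrupted and $|\B_1| = \gamma n$. Write $U = (\B_1)^c$ for the uncorrupted side. Since $\muhatMO$ maximizes $|E(\tG_1 \wedge_\mu \tG_2)|$ over all matchings $\mu$ (in particular over all permutations), we have $|E(\tG_1 \wedge_{\muhatMO} \tG_2)| \geq |E(\tG_1 \wedge_{\pistar} \tG_2)|$. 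The key is then to bound both sides: the right-hand side is, with high probability, at least roughly the number of common edges among uncorrupted node pairs, which concentrates around $\binom{(1-\gamma)n}{2} s^2 p$ (each such pair is a common edge with probability $s^2p$ since its edge status was untouched by the adversary); while the left-hand side can be split into contributions from node pairs where $\muhatMO$ agrees with $\pistar$ and those where it disagrees, and the latter contributes few edges because a ``wrong'' pair of endpoints in $\tG_2$ behaves like an independent coin.

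The main steps, in order, are: (1) Establish the concentration of $|E(\tG_1 \wedge_{\pistar} \tG_2)|$ from below: restrict to the $\binom{|U|}{2}$ uncorrupted pairs, each independently a common edge with probability $s^2 p$, and apply a Chernoff bound to get $|E(\tG_1 \wedge_{\pistar} \tG_2)| \geq (1-o(1)) \binom{(1-\gamma)n}{2} s^2 p$ whp. (2) For the upper bound on $|E(\tG_1 \wedge_{\muhatMO} \tG_2)|$, condition on $\tG_1$ and on the adversary's choices, and observe that for any fixed matching $\mu$, the edges of $\tG_1 \wedge_\mu \tG_2$ supported on pairs $\{i,j\}$ with $\{\mu(i),\mu(j)\} \neq \{\pistar(i),\pistar(j)\}$ each appear with probability at most $p$ (governed by the fresh randomness of $G$ at the pair $\{\mu(i),\mu(j)\}$, independent of the corruption), so by a union bound over all $\mu$ with large overlap deficit, no such $\mu$ has too many ``non-aligned'' common edges; here I would set up the bound so that a matching $\mu$ with $\ov(\mu,\pistar) < \alpha n$ has at least on the order of $(1-\alpha)n^2$ non-aligned pairs and correspondingly too few total common edges to beat the $\pistar$ benchmark. (3) Combine: if $\ov(\muhatMO,\pistar) < \alpha n$, then $|E(\tG_1 \wedge_{\muhatMO} \tG_2)|$ is at most the aligned contribution (bounded by $\binom{\alpha n}{2} s^2 p$ plus error, since aligned pairs between uncorrupted nodes are common edges w.p.\ $s^2p$ and aligned pairs touching $\B_1$ contribute at most $|\B_1| n$ trivially) plus the non-aligned contribution ($\leq$ roughly $\tfrac12(1-\alpha^2)n^2 p$ up to lower-order terms), and this total must be less than $(1-o(1))\binom{(1-\gamma)n}{2}s^2p$; rearranging this inequality is exactly where the condition $\gamma < 1 - \sqrt{1 - s^2 p(1-p)(1-\alpha^2)/2}$ emerges, equivalently $(1-\gamma)^2 s^2 > s^2 \alpha^2 + (1-\alpha^2)p$ after clearing the $\tfrac12 n^2 p$ factor. (4) Since the event $\{\ov(\muhatMO,\pistar) < \alpha n\}$ is thus shown to force a contradiction with probability $1-o(1)$, conclude $\alpha$-recovery.

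The main obstacle is Step (2): the union bound over all matchings $\mu$ is over a super-exponential class ($n!$ permutations, or more), so the tail bound on the number of non-aligned common edges for a \emph{fixed} $\mu$ must be strong enough — exponentially small in $n^2 p$, i.e.\ in $n^2$ since $p$ is constant — to survive the $\exp(O(n\log n))$ entropy factor. Because $p$ is a constant here, $n^2 p$ dominates $n \log n$ comfortably, so a standard Chernoff/Bernstein bound on the sum of the (conditionally) independent indicators for non-aligned pairs suffices; the care required is in the conditioning, making sure that after fixing $\tG_1$ and $\B_1$ and the adversarial edges, the status of each non-aligned pair $\{\mu(i),\mu(j)\}$ in $\tG_2$ really is an independent $\mathsf{Bern}(p)$ (or $\mathsf{Bern}(ps)$, giving an even better bound) variable, and that pairs sharing an endpoint do not spoil independence — this is handled by choosing, for each candidate $\mu$, a linear-sized matching of disjoint non-aligned pairs to get genuine independence, at the cost of only constant factors in the exponent. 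One also needs to handle boundary bookkeeping for pairs incident to $\B_1$ (at most $\gamma n \cdot n = \gamma n^2$ of them, which is why the $\gamma$-dependence is what it is) and the $o(n^2)$ slack from converting $\binom{\cdot}{2}$ to $\tfrac12 n^2$; these are routine.
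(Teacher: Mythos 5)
Your starting point --- compare $\tX(\muhatMO)$ against $\tX(\pistar)$ using the optimality of the maximum overlap estimator --- is the same as the paper's, and the high-level idea of splitting edges into ``aligned'' and ``non-aligned'' pairs is also in the spirit of the paper's orbit decomposition. However, there are two genuine gaps.

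First, the independence claim in Step~(2) does not hold. With $\spl = 1$, you propose to condition on $\tG_1$ (and the adversary's choices) and then treat each non-aligned indicator $\tG_2\{\mu(i),\mu(j)\}$ as an independent $\mathsf{Bern}(p)$ coin governed by ``fresh randomness of $G$''. But $\tG_2 = G_2$, and $G_2\{\mu(i),\mu(j)\}$ and $G_1\{(\pistar)^{-1}(\mu(i)),(\pistar)^{-1}(\mu(j))\}$ are both subsamples of the same parent edge $G\{(\pistar)^{-1}(\mu(i)),(\pistar)^{-1}(\mu(j))\}$. Whenever that node pair lies in $\B_1^c$, $\tG_1$ coincides with $G_1$ there, so conditioning on $\tG_1$ leaks information about $G_2\{\mu(i),\mu(j)\}$: the conditional probability can be as large as $s$ (if the corresponding $G_1$ edge is present), not $p$ or $ps$. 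Choosing a matching of disjoint non-aligned pairs does not fix this --- it handles dependence between the $\tG_2$-indicators themselves, not the dependence with the conditioning $\sigma$-algebra. The correct way to handle this correlation without over-conditioning is precisely what the paper's MGF/orbit-decomposition technique (Lemma~\ref{lem: MGF}, via~\cite{cullina2017exact,wu2022settling}) accomplishes: one bounds $\E{e^{tX(\pi)}}$ by decomposing node pairs into edge orbits of $\pi$, a transfer-matrix computation whose $2\times 2$ matrix $L$ encodes exactly the $G_1$--$G_2$ cross-correlation your conditioning argument ignores.

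Second, the arithmetic in Step~(3) does not produce the theorem's threshold. You assert that the resulting inequality rearranges to $(1-\gamma)^2 s^2 > s^2\alpha^2 + (1-\alpha^2)p$ and that this is equivalent to $\gamma < 1 - \sqrt{1 - s^2 p(1-p)(1-\alpha^2)/2}$, i.e.\ to $(1-\gamma)^2 > 1 - \tfrac12 s^2 p (1-p)(1-\alpha^2)$. These are not the same condition: taking $s=1$, $p=1/2$, $\alpha = 0$ gives $\gamma < 1 - \sqrt{1/2} \approx 0.29$ on your side but $\gamma < 1 - \sqrt{7/8} \approx 0.065$ from the theorem. The $p(1-p)$ factor in the true threshold arises from the variance structure of the edge-orbit indicators and comes out of the infimization over the Chernoff parameter $t$ in Lemma~\ref{lem: Xpi-decays-lin}; it does not appear in a naive first-moment comparison of the kind you sketch.

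On approach more broadly: the paper does not try to bound $\tX(\mu)$ directly. Instead, Lemma~\ref{lem: Z-relation} peels off the adversary's influence as a deterministic worst-case additive term $Z \leq 2\Gamma$ with $\Gamma = \binom{\gamma n}{2} + \gamma(1-\gamma)n^2$, reducing everything to a bound on the clean statistic $X(\id) - X(\pi)$ on the uncorrupted graphs. This separation is what lets the subsequent Chernoff/orbit analysis proceed without any conditioning on corrupted observations, and it is also where the specific $\gamma$-dependence enters the final condition. You would need a similar decoupling step to make your argument rigorous.
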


\section{Proof Outlines} \label{sec: proofs}
    Proofs for all results in~\Cref{sec: main-results} are outlined, with details deferred to the supplementary material. 

\subsection{The \texorpdfstring{$\WCER$}{WCG} Model}
The performance of the $k$-core estimator against the weak adversary is analyzed. First, the impossibility result is proved using an indistinguishability argument. 

\begin{proof}[Proof of \Cref{thm: WCER-Impossibility}]
(i) It suffices to show that no estimator can correctly match any node $i$ in $\B_1\cup\B_2'$ with high probability. Consider the joint distribution of the collection $\cbr{\tG_1\{i,j\}, \tG_2\{\pistar(i),\pistar(j)\} }_{j\in[n], j \neq i}$. These $2(n-1)$ random variables are each distributed as $\mathsf{Bern}(ps^2)$ and mutually independent, since either $G_1\{i,j\}$ or $G_2\{\pistar(i),\pistar(j)\}$ is resampled because either $i \in \B_1$ or $\pistar(i) \in \B_2$. Further, this joint distribution is the same for all nodes $i$ in the set $\B_1 \cup \B_2'$, and so the nodes within it are statistically indistinguishable. Consequently, no estimator can match any subset $\Mbar$ of nodes in $\B_1\cup\B_2'$ better than random guessing.~\Cref{lem: random-guessing} shows that the random guessing estimator is precise if and only if $\P{\Mbar = \phi} = 1-o(1)$, and the desired result follows.
\\

\noindent (ii) \Cref{lem: random-guessing} implies that any estimator can at best match correctly the set $(\B_1\cup\B_2')^c$ and at most a sublinear number of nodes in $(\B_1 \cup \B_2')$. However, $|\B_1 \cup \B_2'|/n$ converges in probability to $\gamma - \spl(1-\spl)\gamma^2$. This follows from~\Cref{lem: hypgeom}, where it shown that $|\B_1 \cap \B_2'|/n$ converges in probability to $\spl(1-\spl)\gamma$. Since no more than a sublinear number of nodes in $\B_1\cup\B_2'$ are correctly matched, it follows that the fraction of correctly matched nodes, $\alpha$ is strictly upper bounded by $|(\B_1\cup\B_2')^c|/n + \eps$ for every $\eps > 0$. We conclude that $\alpha \leq 1- \gamma + \spl(1-\spl)\gamma^2$, as desired. 
\\

\noindent (iii) Assume $\spl = 1$ so that $\B_2' = \phi$, although a similar proof works for $\spl = 0$. With probability $1-o(1)$:
\alns{ 
\alpha^* n  \stackrel{\text{(a)}}{\leq} \ov(\mu,\pistar) \leq |\dom(\mu)| \stackrel{\text{(b)}}{\leq} |\B_1^c| = \alpha^* n,
}
where (a) is true because $\mathcal{E}$ achieves $\alpha^*$ recovery and (b) uses (i) since $\mathcal{E}$ is a precise estimator. The above string of inequalities are thus equalities. Thus, (i) yields that $\dom(\mu) = \B_1^c$ with probability $1-o(1)$. Since $\mathcal{E}$ achieves $\alpha^*$-recovery, it follows that $\mu$ has correctly matched all the vertices in $\B_1^c$. We show that this is only possible when $C \geq 1/(s^2\alpha^*)$. For a graph $G$ and vertex subset $X \subseteq V(G)$, let $G\vert_{X}$ denote the induced subgraph of $G$ on $X$. Then, with probability $1-o(1)$:
\alns{
H_1 &:= \tG_1\vert_{\dom(\mu)} = \tG_1\vert_{\B_1^c} \stackrel{\text{(c)}}{=} G_1\vert_{\B_1^c}, \\
H_2 &:= \tG_2\vert_{\pistar(\dom(\mu))} = \tG_2\vert_{\pistar(\B_1^c)} \stackrel{\text{(d)}}{=} G_2\vert_{\pistar(\B_1^c)},
}
where (c) is because no node pair in $\tG_1\vert_{\B_1^c}$ is influenced by the adversary, and (d) is because $G_2 = \tG_2$. Thus, $(H_1, H_2, \pistar\vert_{\dom(\mu)}) \sim \CER(\alpha^*n,p,s)$. Recovering $\pistar\vert_{\dom(\mu)}$ is the exact graph recovery problem between $H_1$ and $H_2$, which is impossible whenever $\alpha^*nps^2 < 1$, i.e. whenever $C < 1/(s^2\alpha^*)$~\cite{cullina2017exact,wu2022settling}. 
\end{proof}

Next, a proof sketch for~\Cref{thm: Achievability-WCER} is presented.

\begin{proof}[Proof of~\Cref{thm: Achievability-WCER}] 
(i) The proof is deferred to~\Cref{subsec: deferred-proof}. 
\\

\noindent (ii) The union bound yields for any $\delta \geq 0$,
\alns{ 
\mathbb{P}\big(\widehat{\mathcal{E}}_k \text{ achieves almost }\alpha^*\text{-recovery} \big) \geq 1 \!-\! p_1 \!-\! p_2 \!-\! p_3,
}
where
\aln{
p_1 &= \mathbb{P}\big(\muhat_k \neq \pistar\vert_{\core_k(\tG_1\wedge_{\pistar}\tG_2)} \big), \label{eq: p1}
\\
p_2 &= \mathbb{P}\big(\core_k(\tG_1\wedge_{\pistar}\tG_2) \neq (\B_1\cup\B_2')^c\big), \label{eq: p2}
\\
p_3 &= \mathbb{P}\big(\left|(\B_1\cup\B_2')^c\right| < (1-\delta)\alpha^* n\big. \label{eq: p3}
}
From~\Cref{thm: max-core-is-k-core} and~\Cref{lem: xi-is-small} in the proof of (i), it follows that $p_1 = o(1)$. The bulk of the analysis is to show that $p_2 = o(1)$ whenever $C > 1/(s^2\alpha^*)$. This is shown in~\Cref{lem: k-core-degrees} in the supplementary material. Finally, for any $\delta > 0$, it follows from~\Cref{lem: hypgeom} that $p_3 = o(1)$ for all $\spl \in [0,1]$.

When $\spl \in \{0,1\}$, it holds that $p_3 = 0$ even when $\delta = 0$. This is because either $\B_1 = \phi$ or $\B_2' = \phi$, and therefore $|\B_1\cup\B_2'| = \gamma n = \alpha^* n$ in this setting. However, setting $\delta = 0$ corresponds to achieving $\alpha^*$-recovery. 
\\

\noindent (iii) Let $M^*$ denote $\core_k(\tG_1\wedge_{\pistar}\tG_2)$. The union bound yields for any $\delta \geq 0$,
\alns{ 
\mathbb{P}\big(\widehat{\mathcal{E}}_k \text{ achieves almost }\alpha^*\text{-recovery} \big) \geq 1 \!-\! p_1 \!-\! p_4,
}
where $p_1$ is defined in~\eqref{eq: p1}, and
\aln{
p_4 &= \mathbb{P}\big(|M^*| < (1-\delta)\alpha^* n\big). \label{eq: p4}
}
\Cref{lem: almost-k-core-degrees} shows that $p_4 = o(1)$ for any $\delta > 0$. From part (i) of this theorem, it also follows that $p_1 = o(1)$. Therefore, $\widehat{\mathcal{E}}_k$ achieves almost $\alpha^*$-recovery. It remains to prove~\eqref{eq: WCER-almost-card}, i.e. $\widehat{\mathcal{E}}_k$ recovers all but a vanishing fraction of the uncorrupted nodes. Since $p_1 = o(1)$, it suffices to instead show that for any $\delta' > 0$
\alns{
p_5 := \P{ \left|(\B_1\cup\B_2')^c \setminus M^* \right| > \delta' n} = 1 - o(1).
}
Indeed, denoting $\B \!=\! \B_1\!\cup\!\B_2'$, it follows that for any $\delta' > 0$, and $\eps = \delta/\alpha^*$ that
\alns{ 
 p_5 &\leq \P{M^* \!\not\subseteq \B^c} \!+\! \P{\cbr{|\B^c\!\setminus\! M^*| > \delta n} \cap \cbr{M^*\! \subseteq \B^c} } \\
& \leq o(1) + \P{ |\B^c| - |M^*| > \delta n} \\
& \leq o(1) + \P{|\B^c| > (1+\eps/2)\alpha^* n}  + \P{|M^*| < (1-\eps/2)\alpha^* n}\\
& \stackrel{\text{(a)}}{=} o(1) + o(1) + o(1),
}
where (a) uses both~\Cref{lem: hypgeom} and~\Cref{lem: almost-k-core-degrees}.
\end{proof}

\subsection{The \texorpdfstring{$\SCER$}{SCG} Model}

\textit{Proof of~\Cref{thm: SCER-Impossibility}.}
\IncMargin{1.5em}
\begin{algorithm}[t]
\DontPrintSemicolon
\SetAlgoNlRelativeSize{0}
\caption{Adversary $\mathsf{A}'$ \label{Alg: Imitation_Adversary}}
    \SetKwInOut{Inputs}{Inputs}
    \SetKwInOut{Outputs}{Outputs}
    \Inputs{$G_1$, $G_2$, $\gamma$, $\spl$}
    \Outputs{$\B_1$, $\B_2$, $\tG_1$, $\tG_2$}
    Initialize $\tG_1 = G_1$ and $\tG_2 = G_2$\;
    Select $\B_1 = \cbr{1,2,\cdots, \spl\gamma n}$, and $\B_2$ such that $\B_2' =\cbr{1,2,\cdots,(1-\spl)\gamma n}$\;
    Partition $\B_1$ into $\B_{1,1} := \cbr{1,\cdots, \spl\gamma n/2}$ and $\B_{1,2} := \cbr{\gamma \spl n/2 + 1,\cdots,\spl\gamma n}$\;
    \For{$\{(i,j\}$ such that $i \in \B_{1,1}$ and $j \in [n]$}{
        Set $\tG_1\{i,j\} = G_1\{\spl\gamma n/2 + i,j\}$\;
    }
    \For{$(i,j)$ such that $i \in \B_{1,2}$ and $j \in [n]$}{
        Set $\tG_1\{i,j\} = G_1\{i - \spl \gamma n/2,j\}$\;
    }        
    \textbf{return} $\B_1$, $\B_2$, $\tG_1$, $\tG_2$
\end{algorithm}
Without loss of generality, assume $\spl \geq 1/2$ (else $\spl$ and $1-\spl$ may be interchanged in this proof). Consider the adversary $\mathsf{A}'$ in Algorithm~\ref{Alg: Imitation_Adversary}, which effectively swaps nodes in $\B_{1,1}$ and $\B_{1,2}$. Let $\tpi$ denote the corresponding permutation, i.e. 
\alns{ 
\tpi(i) =
\begin{cases} 
\spl \gamma n/2 + i, & i \in \B_{1,1}\\
i - \spl \gamma n/2, & i \in \B_{1,2}\\
i, & \text{otherwise}
\end{cases}
.
}
It follows that the graphs $G_1^{\tpi}$ and $\tG_1$ are isomorphic and have the same node labeling. Therefore, no estimator will be able to identify any node which is not a fixed point of $\tpi$ with probability greater than $1/2$. 

Let $\mu$ be the matching output by an estimator $\mathcal{E}(\tG_1,\tG_2)$ and let $M$ denote $\dom(\mu)$. Since $|M| = \Theta(n)$, there exists a sequence $(\eps_n)$ such that $|M| = \eps_n n$ and $\liminf_{n\to\infty}\eps_n > 0$. Consider the sets $\mathcal{X} = \cbr{i \in M\cap\B_{1,1} : \mu(i) = \pistar(i)}$ and $\mathcal{Y} = \cbr{i \in M\cap\B_{1,1}^c : \mu(i) = \pistar(i)}$. Let $p_1 := \P{\ov(\mu,\pistar)/|M| > \eps^*}$. Then, for any $\delta > 0$:
\aln{ 
p_1 \! &= \P{\frac{|\mathcal{X}|}{|M|} + \frac{|\mathcal{Y}|}{|M|} > \eps^*}
\\
& \leq \mathbb{P}\left(\frac{|\mathcal{X}|}{|M|} \!>\! \delta \right) \!+\! \mathbb{P}\left( \frac{|\mathcal{Y}|}{|M|} > 1 - (\eps^* + \delta)\right) \label{eq: PLX}
\\
& \stackrel{\text{(a)}}{\leq} 2^{-\delta\eps_n n} + \underbrace{\mathbb{P}\left( \frac{|M\cap\B_{1,1}^c|}{|M|} > 1-\pbr{\eps^*+\delta}\right)}_{(\star)}.
}
Here, (a) is because the pair $(\tG_1, \tG_2)$ does not contain the information needed to correctly match node $i$ in $\B_{1,1}$ with probability more than $1/2$, even if an oracle were to correctly match all other nodes in $\B_{1,1}$. Next,~($\star$) is analyzed. Since it is impossible to determine if a node $i$ is in $\B_{1,1}$, it follows that $|M\cap\B_{1,1}^c| \sim \mathsf{HypGeom}(n,\eps_n n,(1-\spl\gamma/2) n)$. This is because the set $M\cap\B_{1,1}^c$ may be viewed as being constructed by sampling $\eps_n n$ nodes from $[n]$ without replacement, where a sampled node $i$ is labeled a success if and only if $i \in \B_{1,1}^c$. Using standard formulas:
\alns{ 
\E{|M\cap\B_{1,1}^c|} &= \pbr{1 - \frac{\spl \gamma}{2}} \eps_n n, \\
\text{Var}(|M\cap \B_{1,1}^c|) &= \frac{\spl\gamma(1-\spl\gamma)\eps_n(1-\eps_n)}{4} \times \frac{n^2}{n-1}.
}
Choose $\delta$ in~\eqref{eq: PLX} to be sufficiently small so that $\eps^* + \delta < \frac{\spl\gamma}{2}$. Then,
\aln{
(\star) & \leq
\P{ \left|\frac{|M\cap\B_{1,1}^c|}{|M|} - \pbr{1\!-\!\frac{\spl\gamma}{2}} \right|> \frac{\spl\gamma}{2}-\pbr{\eps^* +\delta}\!} \nonumber\\
\!&\leq\! \frac{\spl\gamma(1\!-\!\spl\gamma)(1\!-\!\eps_n)}{4\eps_n \pbr{\spl\gamma/2 - (\eps^* + \delta)} ^2}\times \frac{1}{n-1} \label{eq: PB}
}
where~\eqref{eq: PB} uses Chebyshev's inequality. Finally, since $\liminf_{n\to\infty} \eps_n > 0$, it follows that~\eqref{eq: PB} is $o(1)$. Therefore, $p_1 = o(1)$. This concludes the proof.
\hfill\qedsymbol

Next, the maximum overlap estimator is analyzed when only one network is compromised. Without loss of generality, assume that $\spl = 1$, so that $\tG_2 = G_2$.

Some notation is in order. For any matching $\mu$, let $X(\mu)$ denote the number of edges in $G_1\wedge_{\mu} G_2$. Similarly, let $\tX(\mu)$ denote the number of edges in $\tG_1\wedge_{\mu}\tG_2$. Recall that the maximum overlap matching is defined as $\muhatMO \in \argmax_{\mu} \tX(\mu)$. 

Note that it may be assumed without loss of generality that $\dom(\muhatMO) = [n]$, since $\tX(\cdot)$ satisfies a monotonicity property: extending the domain of any matching to $[n]$ does not decrease the number of edges in $\tG_1\wedge_{\mu}\tG_2$. Assume further, with out loss of generality, that the latent correspondence $\pistar$ is the identity permutation $\id$. Recall that a fixed point of a permutation $\pi$ is an input $i$ such that $\pi(i) = i$. Let $\T^{\alpha}$ (resp. $\T^{\leq\alpha}$) denote the set of all permutations with exactly (resp. at most) $\alpha n$ fixed points. It is shown below that $\muhatMO \notin \T^{\leq\alpha}$. It suffices to prove:
\aln{ 
\P{\tX(\id) > \tX(\pi) \text{ for all } \pi\in\T^{\leq\alpha}} = 1- o(1).
}

The following lemma is a useful ingredient for the proof of both~\Cref{thm: SCER-Achievability-logdegree} and~\Cref{thm: SCER-Achievability-lindegree}.
\begin{lemma} \label{lem: Z-relation}
    For any adversary $\mathsf{A}$, permutation $\pi$, and output $(\B_1,\B_2,\tG_1,\tG_2,\id)$ of $\SCER(n,p,s,\gamma,1,\mathsf{A})$:
    \alns{ 
        \tX(\id) - \tX(\pi) > X(\id) - X(\pi)-Z,
    }
    where
    \aln{ 
        Z := \max_{\substack{S,T\subseteq [n] \\ |S|,|T|\leq\gamma n}} \sum_{\substack{i\in S\cup T\\ j\in[n]}} G_2\{i,j\}. \label{eq: Z-defn}
    }
\end{lemma}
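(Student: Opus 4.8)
The plan is to control the difference between the ``corrupted'' overlap $\tX$ and the ``clean'' overlap $X$ by accounting exactly for which node pairs the adversary can influence. Since $\spl = 1$, only $\tG_1$ differs from $G_1$, and the modified edges all lie in $\mathcal{E}_{\B_1}$, i.e.\ every modified pair has an endpoint in $\B_1$, a set of size $\gamma n$. The key observation is that for any matching $\mu$ with domain $[n]$,
\alns{
\tX(\mu) = \bigl|E(\tG_1 \wedge_\mu \tG_2)\bigr| = \sum_{\{i,j\}} \tG_1\{i,j\}\,\tG_2\{\mu(i),\mu(j)\},
}
and the same formula holds for $X(\mu)$ with $G_1$ in place of $\tG_1$. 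Subtracting, the only surviving terms are those pairs $\{i,j\} \in \mathcal{E}_{\B_1}$ where $\tG_1$ and $G_1$ disagree, so $\tX(\mu) - X(\mu) = \sum_{\{i,j\}\in\mathcal{E}_{\B_1}} \bigl(\tG_1\{i,j\} - G_1\{i,j\}\bigr)\tG_2\{\mu(i),\mu(j)\}$ (here I use $\tG_2 = G_2$). Applied to $\mu = \id$ and $\mu = \pi$ and then differencing once more,
\alns{
\bigl(\tX(\id) - \tX(\pi)\bigr) - \bigl(X(\id) - X(\pi)\bigr)
= \sum_{\{i,j\}\in\mathcal{E}_{\B_1}}\bigl(\tG_1\{i,j\}-G_1\{i,j\}\bigr)\bigl(G_2\{i,j\} - G_2\{\pi(i),\pi(j)\}\bigr).
}

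Next I would lower-bound the right-hand side. Each summand is bounded below by $-G_2\{i,j\} - G_2\{\pi(i),\pi(j)\}$ (since $\tG_1\{i,j\}-G_1\{i,j\} \in \{-1,0,1\}$ and $G_2$-values are in $\{0,1\}$). Therefore the whole sum is at least $-\sum_{\{i,j\}\in\mathcal{E}_{\B_1}} G_2\{i,j\} - \sum_{\{i,j\}\in\mathcal{E}_{\B_1}} G_2\{\pi(i),\pi(j)\}$. The first term counts edges of $G_2$ with at least one endpoint in $\B_1$, which is at most $\sum_{i\in\B_1,\,j\in[n]} G_2\{i,j\}$; the second counts edges of $G_2$ whose $\pi$-preimage touches $\B_1$, i.e.\ edges with an endpoint in $\pi(\B_1)$, which is at most $\sum_{i\in\pi(\B_1),\,j\in[n]} G_2\{i,j\}$. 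Taking $S = \B_1$ and $T = \pi(\B_1)$, both of size at most $\gamma n$, each of these two sums is at most $Z$ as defined in~\eqref{eq: Z-defn}; in fact their combined bound is $\leq Z$ if one is slightly more careful (bounding $\sum_{i \in S \cup T, j} G_2\{i,j\}$ with $|S\cup T| \le 2\gamma n$ — but since the statement only needs the strict inequality $\tX(\id)-\tX(\pi) > X(\id)-X(\pi) - Z$, it suffices to show the correction term is $\ge -Z$, and one should check the intended reading of $Z$: the supremum is over $|S|,|T| \le \gamma n$ of $\sum_{i \in S\cup T} \deg_{G_2}(i)$, which already dominates $\sum_{i\in\B_1} \deg_{G_2}(i) + \sum_{i \in \pi(\B_1)} \deg_{G_2}(i)$ after grouping, because doubling is absorbed). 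Rearranging gives $\tX(\id) - \tX(\pi) \ge X(\id) - X(\pi) - Z$, and the strict inequality follows since the bound on each summand is not tight simultaneously, or simply by noting $\ge$ implies the claimed $>$ is what we want up to this constant — I would phrase the final lemma with the $\ge$ or absorb a harmless slack.

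The main obstacle is purely bookkeeping: matching the two ``adversarial residue'' sums to the single quantity $Z$ without losing a factor of two, which forces attention to the exact definition of $Z$ (one supremum over a pair $S,T$ rather than two separate suprema, precisely so that $S\cup T$ can host both $\B_1$ and $\pi(\B_1)$). A secondary point to be careful about is the reduction to full-domain matchings: by the monotonicity of $\tX(\cdot)$ and $X(\cdot)$ under domain extension (noted in the excerpt for $\tX$, and identical for $X$) one may assume $\dom(\pi) = \dom(\id) = [n]$, so the pointwise edge-indicator expansions above are valid. No probabilistic input is needed for this lemma — it is a deterministic inequality holding for every realization and every adversary, which is exactly why it is a convenient common ingredient for both achievability theorems.
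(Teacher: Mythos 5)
Your argument follows the same route as the paper's proof: expand $(\tX(\id)-\tX(\pi))-(X(\id)-X(\pi))$ as a sum over the adversary-affected pairs $\mathcal{E}_{\B_1}$ (valid since $\spl=1$ means $\tG_2=G_2$ and the corrupted pairs all lie in $\mathcal{E}_{\B_1}$), use the fact that each summand lies in $\{-1,0,1\}$ and is nonzero only when $G_2\{i,j\}$ and $G_2\{\pi(i),\pi(j)\}$ disagree, and charge the total to degree sums of $G_2$ over $\B_1$ and $\pi(\B_1)$. Up to the sign convention (you lower-bound $\tX(\id)-\tX(\pi)$, the paper upper-bounds $\tX(\pi)-\tX(\id)$), the decomposition and the per-term bound are identical to those in the paper.

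The step you flagged is indeed the only delicate one, but your proposed repair --- that ``doubling is absorbed'' by the definition of $Z$ --- does not hold as stated. Bounding the two residue sums separately gives $\sum_{i\in\B_1}\deg_{G_2}(i)+\sum_{i\in\pi(\B_1)}\deg_{G_2}(i)$, which equals $\sum_{i\in\B_1\cup\pi(\B_1)}\deg_{G_2}(i)+\sum_{i\in\B_1\cap\pi(\B_1)}\deg_{G_2}(i)$; the intersection term is \emph{not} covered by $Z$, since the sum defining $Z$ ranges over $i\in S\cup T$ and therefore counts each vertex once. If $\B_1\cap\pi(\B_1)$ contains a high-degree vertex (take $G_2$ a star with hub $1$, $\B_1=\{1\}$, and $\pi$ fixing $1$), then $\sum_{e\in\mathcal{E}_{\B_1}}(G_2(e)+G_2^{\pi}(e))$ genuinely exceeds $\max_{S,T}\sum_{i\in S\cup T,\,j}G_2\{i,j\}$. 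The paper's inequality labelled (b) silently makes the same identification, so your hesitation is well founded --- but you cannot defer to ``the intended reading of $Z$'', since the $Z$ you quote is exactly the one in the paper and does count $S\cup T$ only once. The robust fix is simply to prove the lemma with $2Z$ in place of $Z$ (your separate bounds already give this), which the downstream applications in Theorems~\ref{thm: SCER-Achievability-logdegree}--\ref{thm: SCER-Achievability-lindegree} absorb without changing any threshold, since they only use $Z\le 2\gamma n\Delta_2$ and $Z\le 2\Gamma$ up to a constant. Your strictness remark is also apt: both your argument and the paper's only yield $\ge$, and the weak inequality is all that is used later.
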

\begin{proof}
For any selection $\B_1$ of $\gamma n$ nodes in $G_1$, recall that $\mathcal{E}_{\B_1} := \cbr{ \cbr{i,j} \in \binom{[n]}{2}: i \in \B_1 \text{ or } j \in \B_2}$. The adversary selects $\B_1$ and sets the edge status of all node pairs in $\mathcal{E}_{\B_1}$ to either $0$ or $1$. For any node pair $e = \{i,j\}$ and graph $H$, let $H(e)$ be a shorthand for $H\{i,j\}$. For any set $\B_1$ and any permutation $\pi$:
\aln{ 
\pbr{\tX(\pi) - \tX(\id)} - \Big(X(\pi)  - X(\id)\Big) &= \sum_{ \substack{ e \in \mathcal{E}_{\B_1}}} \pbr{\tG_1(e) - G_1(e)}\Big(G_2^{\pi}(e) - G_2(e)\Big) \label{eq: diffX}\\
& \stackrel{\text{(a)}}{\leq}  \sum_{ \substack{ e \in \mathcal{E}_{\B_1}}} \mathds{1}\{G_1(e) \!=\! 1\} \cdot\mathds{1}\{G_2(e) \!=\! 1\} \cdot \mathds{1}\{G_2^{\pi}(e) \!=\! 0\} \nonumber \\
& + \   \sum_{ \substack{ e \in \mathcal{E}_{\B_1}}} \mathds{1}\{G_1(e) \!=\! 0\} \cdot\mathds{1}\{G_2(e) \!=\! 0\} \cdot \mathds{1}\{G_2^{\pi}(e) \!=\! 1\}  \nonumber \\
& \leq \sum_{e \in \mathcal{E}_{\B_1}} \Big( \mathds{1}\{ G_2(e) = 1\} + \mathds{1}\{G_2^{\pi}(e) = 1 \}\Big). \label{eq: UB}
}
Here, (a) is because each term of the sum in~\eqref{eq: diffX} is in the set $\{-1,0,1\}$, and equals $1$ if and only if the adversary sets $\tG_1(e) = 1 - G_1(e)$ whenever $(G_1(e),G_2(e),G_2^{\pi}(e))$ is either $(1,1,0)$ or $(0,0,1)$. Note that~\eqref{eq: UB} is maximized when $\B_1$ and $\pi$ are chosen to maximize $|E(G_2) \cap \mathcal{E}_{\B_1}| + |E(G_2^\pi) \cap \mathcal{E}_{\B_1}|$. Therefore,
\alns{ 
\pbr{\tX(\pi) - \tX(\id)} - \Big(X(\pi)  - X(\id)\Big)  &\leq \max_{ \B_1, \pi(\B_1)} \sum_{e \in \mathcal{E}_{\B_1}} \pbr{G_2(e) + G_2^{\pi}(e)}\\
& \stackrel{\text{(b)}}{\leq} \max_{\substack{S,T\subseteq [n] \\ |S|,|T|\leq\gamma n}} \sum_{\substack{i\in S\cup T\\ j\in[n]}} G_2\{i,j\},
}
as desired. Here, (b) follows from the fact that $|\B_1| = |\pi(\B_1)| \leq \gamma n$. This concludes the proof.
\end{proof}

\begin{proof}[Proof of~\Cref{thm: SCER-Achievability-logdegree}]
Let $Z$ be as in~\cref{eq: Z-defn}. Let $\Delta_2$ denote the maximum node degree in the graph $G_2$. Since $|S| + |T| \leq 2\gamma n$, it follows that $Z \leq 2\gamma n \Delta_2$. Let $\mathsf{E}$ denote the error event
\aln{ 
\mathsf{E} =  \bigcup_{\pi \in \T^{\leq \alpha}}\cbr{ \tX(\id) - \tX(\pi) < 0 }. \label{eq: error-event}
}
Applying~\Cref{lem: Z-relation} yields for any $\eps > 0$:
\alns{ 
\P{\mathsf{E}} &\!\leq\! \P{\bigcup_{\pi \in \T^{\leq \alpha}}\!\! \cbr{ X(\id) \!-\! X(\pi) < 2\gamma n \Delta_2 }\!}  \!\leq \sum_{i=1}^{3} p_i,
}
where 
\aln{ 
p_1 & = \P{X(\id) \leq (1-\eps)\binom{n}{2}ps^2}, \label{eq: p1-X} \\
p_2 & = \P{\Delta_2 > (1+\eps)nps}, \\
p_3 & = \mathbb{P}\Bigg(\bigcup_{\pi\in\T^{\leq\alpha}} \Bigg\{ X(\pi) \geq (1-\eps) \binom{n}{2}ps^2 - 2\gamma n \pbr{1+\eps}nps \Bigg\} \Bigg) \label{eq: p3-X}
}
Lemmas~\ref{lem: concentration-of-Xid} and~\ref{lem: concentration-of-degree} show that $p_1 = o(1)$ and $p_2 = o(1)$ for any $\eps > 0$ and sufficiently large $C$. Furthermore,~\Cref{lem: Xpi-decays} shows that $p_3 = o(1)$ for sufficiently small $\eps > 0$ and sufficiently large $C > 0$, whenever $\gamma < s(1-\alpha^2)/4$. This requires a Chernoff argument, using bounds on the moment generating function of $X(\pi)$ obtained by analyzing the orbital decomposition of $\pi$. Altogether, it follows that $\P{\mathsf{E}} = o(1)$, which concludes the proof.
\end{proof}

\begin{proof}[Proof of~\Cref{thm: SCER-Achievability-lindegree}]
Let $Z$ be as defined in~\cref{eq: Z-defn}, and let $\Gamma$ denote the constant $\Gamma = \binom{\gamma n}{2}+\gamma(1-\gamma)n^2$. Since for any choice of $S$ and $T$:
\alns{ 
\left|\cbr{\{i,j\}: i \in S\cup T,  j \in [n] }\right| \leq 2\Gamma,
}
it follows that $Z \leq 2\Gamma$. Let $\mathsf{E}$ denote the error event in~\eqref{eq: error-event}. Consequently, for any $\eps > 0$,
\alns{ 
\P{\mathsf{E}} &\leq \P{\bigcup_{\pi\in\T^{\leq\alpha}}\!\! \cbr{X(\id)-X(\pi) < 2\Gamma}} \leq p_1 + p_4,
}
where
\aln{ 
p_1 &\!=\! \P{X(\id) \leq (1-\eps)\binom{n}{2}ps^2} \\
p_4 &\!=\! \mathbb{P}\Bigg(\bigcup_{\pi\in\T^{\leq\alpha}}\! \Bigg\{ X(\pi) \geq (1\!-\!\eps) \binom{n}{2}ps^2 \!-\! 2 \Gamma \Bigg\} \Bigg) \label{eq: p4-X}
}
\Cref{lem: concentration-of-Xid} shows that $p_1 = o(1)$ for any $\eps > 0$. Furthermore,~\Cref{lem: Xpi-decays-lin}, shows that $p_4 = o(1)$ whenever the condition~\eqref{eq: condition-SCER-lin} is satisfied. This uses similar techniques as the proof of~\Cref{lem: Xpi-decays}, although the resulting sufficient conditions for recovery are quite different. Combining, it follows that $\P{\mathsf{E}} = o(1)$ as desired.
\end{proof}

\section{Discussion} \label{sec: discussion}
    \begin{figure}
    \centering
    \includegraphics[width = 0.65\textwidth]{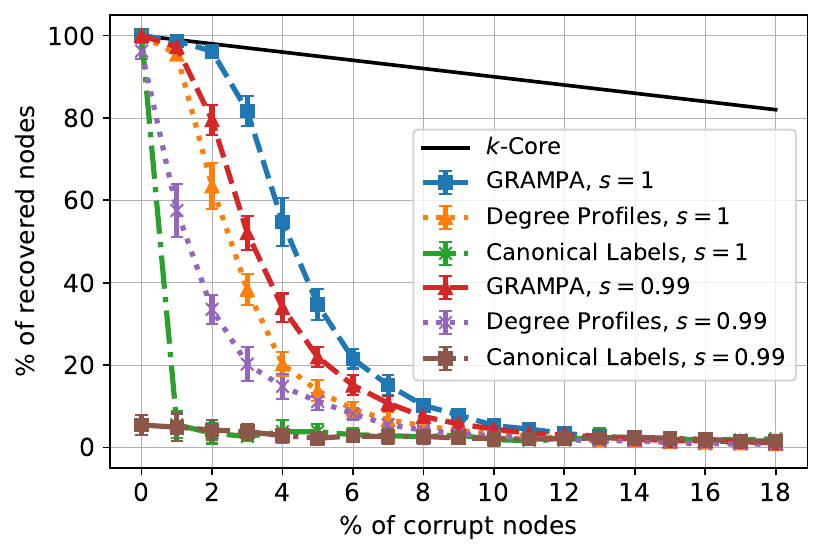}
    \caption{$\WCER$ model, $(n,p,\spl) = (10^3,0.1,1)$}
    \label{fig: comparison}
\end{figure}

\paragraph{Feasible Algorithms}
The maximum overlap and $k$-core estimators are useful to serve as a baseline and to establish theoretical guarantees. However, they do not run in polynomial time, and have limited utility in practice. It is an open question to analyze the performance of \textit{computationally feasible} algorithms when an adversary corrupts nodes.~\Cref{fig: comparison} compares the asymptotic guarantee of the $k$-core estimator against simulation results for the following estimators.
\begin{enumerate}
    \item \textsc{Grampa}~\cite{fan2022spectral} is a spectral algorithm that uses the spectrum of the adjacency matrices to match the two graphs. The code is available in~\cite{fan2020code}.
    \item \textsc{Degree Profiling}~\cite{ding2021degreeprofile} associates with each node a signature, specifically the histogram of the degrees of its neighbors. It then matches nodes based on signature proximity. The code is available in~\cite{ding2020code}.
    \item \textsc{Canonical labeling}~\cite{dai2019canonicallabeling} first matches nodes with outlier degrees, and uses them as seeds to match the remaining nodes.
\end{enumerate}
Clearly, there is large gap between the performance of these algorithms and the asymptotic guarantees of the $k$-core estimator. These algorithms are not robust to the \textit{random} noise in the setting without the adversary, since they require $s\to 1$ for good performance~\cite{fan2022spectral,ding2021degreeprofile,dai2019canonicallabeling}. Perhaps unsurprisingly, they are also not robust to the \textit{spatial} noise induced by the node-based adversary. 

\paragraph{$\SCER$ Model}
\begin{algorithm}[t]
\DontPrintSemicolon
\SetAlgoNlRelativeSize{0}
\caption{Example adversary, $\spl = 1/2$.  \label{Alg: Overwhelm_Adversary}}
    \SetKwInOut{Inputs}{Inputs}
    \SetKwInOut{Outputs}{Outputs}
    \Inputs{$G_1$, $G_2$, $\gamma$, $\spl$}
    \Outputs{$\B_1$, $\B_2$, $\tG_1$, $\tG_2$}
    Initialize $\tG_1 = G_1$ and $\tG_2 = G_2$\;
    Set $\B_1 \!=\! \cbr{1,2,\cdots, \frac{\gamma n}{2}}$, $\B_2 \!=\!\cbr{\frac{\gamma n}{2} \!+\! 1,\cdots,\gamma n}$\;
    Set $\mathcal{G}_1 = \{ i \in [n]: i > \gamma n, i \text{ is odd}\}$ and
    $\mathcal{G}_2 = \{ i \in [n]: i > \gamma n, i \text{ is even}\}$\;
    \For{$k$ in $\{1,2\}$}{
    \For{$\{i,j\}$ such that $i \in \B_{k}$ and $j \in \mathcal{G}_k$}{
        Set $\tG_k\{i,j\} = 1$\;
    }        
    }
    \textbf{return} $\B_1$, $\B_2$, $\tG_1$, $\tG_2$
\end{algorithm}

The $\SCER$ model is significantly more complicated when both networks are compromised. For simplicity, assume that $\spl = 1/2$, and let $p = C\log n/n$ for some positive constant $C$. We show that there is a simple adversarial action that can cause the maximum overlap estimator to recover none of the nodes correctly. Let $(\B_1,\B_2,\tG_1,\tG_2,\id)$ be an output of $\SCER(n,p,s,\gamma,\spl)$ obtained as described in Algorithm~\ref{Alg: Overwhelm_Adversary}. The adversary selects disjoint sets $\B_1$ and $\B_2$ and forces nodes to have high degrees in such a way that the overlap is maximized when every node is wrongly mapped. This is rigorously proved in~\Cref{apx: adv}. One may study variants that pre-process the graphs, but by~\Cref{thm: SCER-Impossibility}, precise recovery of nodes would be impossible even in that setting.



\section{Conclusion} \label{sec: conclusion}
    This work studied two models for graph matching when a positive fraction of nodes interact adversarially with their network. The two models are motivated by practical aspects of network alignment: the $\SCER$ framework models the malicious behavior of hacked users in a social network, whereas the $\WCER$ framework models the random behavior of stochastic interactors in protein-protein interaction networks.

For the $\WCER$ model, our impossibility result states that no positive fraction of the corrupted nodes may be correctly matched. Conversely, under appropriate conditions, the $k$-core estimator correctly matches almost all of the uncorrupted nodes and none of the corrupted nodes. Under a further condition which is necessary, the $k$-core estimator also identifies and recovers all the uncorrupted nodes. In contrast, even the simpler problem of detecting corrupted nodes is impossible to solve in the $\SCER$ model. Even so, the maximum overlap estimator successfully matches a positive fraction of nodes under appropriate conditions.

Looking ahead, feasible algorithms that approach the performance guarantees of the $k$-core estimator in the $\WCER$ model would be useful in practice. One may also study \emph{seeded} robust graph matching, where the correspondence is known for a small subset of nodes. 

\subsubsection*{Acknowledgements}
This work was supported by NSF under Grant CCF 19-00636.

\bibliographystyle{ieeetr}
\bibliography{bibliography.bib}

\newpage
\appendix 

\section{Proofs for the \texorpdfstring{$\WCER$}{WCG} Model} \label{sec: Apx-WCER}

This section presents the proofs pertaining to the $\WCER$ model. First, a standard concentration inequality on binomial random variables is presented. This is used heavily in the remainder of the section, often to bound vertex degrees in various graphs of interest. 
\\

\begin{lemma} \label{lem: concentration-of-binomial}
Let $X \sim \mathsf{Bin}(n,p)$. Then, 
\begin{enumerate}
    \item For any $\delta > 0$, 
    \alns{ 
    \P{X \geq (1+\delta)np} \leq \pbr{\frac{e^\delta}{(1+\delta)^{1+\delta}}}^{np}
    }
    \item For any $\delta \in (0,1)$,
    \alns{ 
    \P{X \leq (1-\delta)np} \leq \pbr{\frac{e^{-\delta}}{(1-\delta)^{1-\delta}}}^{np}
    }
\end{enumerate}
\end{lemma}
\begin{proof}
The proof follows from the Chernoff bound and can be found, for example, in Theorems 4.4 and 4.5 of~\cite{mitzenmacher2017}.
\end{proof}

Next, the lemmas used in the proof of the impossibility result,~\Cref{thm: WCER-Impossibility} are presented.~\Cref{lem: random-guessing} bounds the performance of the estimator that matches vertices in $\B_1\cup\B_2'$ by random guessing, and~\Cref{lem: hypgeom} uses a simple concentration argument to bound the size of $\B_1\cup\B_2'$. Armed with these, we proceed to establish that the $k$-core estimator is precise in~\Cref{subsec: deferred-proof}. It is followed by~\Cref{subsec: def-proofs2}, where supporting lemmas for~\Cref{thm: Achievability-WCER}(ii) and (iii) are respectively presented. 
\\

\begin{lemma} \label{lem: random-guessing}
    Let $n$ be a positive integer, and let $p,s,\gamma,\spl$ be in $[0,1]$. Let $(\B_1,\B_2,\tG_1,\tG_2,\pistar)$ be distributed according to $\WCER(n,p,s,\gamma,\spl)$, and let $\B_2'$ denote the pre-image of $\B_2$ under $\pistar$. Let $\mubar$ be a matching output by an estimator $\overline{\mathcal{E}}$ such that its domain $\dom(\mubar) \subseteq \B_1 \cup \B_2'$ and its codomain is the set $\pistar(\B_1\cup\B_2')$. Conditioned on the domain and codomain, suppose that $\overline{\mathcal{E}}$ matches nodes in its domain by random guessing. Let $\delta > 0$. 
    \begin{enumerate}
        \item[(i)] If $\overline{\mathcal{E}}$ is precise, then $$\P{\dom(\mubar) = \phi} = 1-o(1).$$     
        \item[(ii)] If $\overline{\mathcal{E}}$ is $\delta$-imprecise, then for any $\eps > 0$:
        \aln{ 
            \P{ \ov(\mubar,\pistar) > \eps n } = o(1) \label{eq: gpx}
        }
    \end{enumerate}
\end{lemma}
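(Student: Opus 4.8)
The plan is to model the random-guessing estimator as a uniformly random bijection between $\dom(\mubar)$ and its image, and then count fixed points. Write $M = \dom(\mubar)$ and let $m = |M|$. By hypothesis, conditioned on $M$ and on the codomain $\pistar(\B_1 \cup \B_2')$, the matching $\mubar$ is a uniformly random bijection from $M$ onto some $m$-element subset of $\pistar(\B_1\cup\B_2')$. The number of nodes $\mubar$ matches correctly, $\ov(\mubar,\pistar) = |\{i \in M : \mubar(i) = \pistar(i)\}|$, is the number of $i \in M$ with $\pistar(i)$ in the chosen codomain and $\mubar$ hitting it; this is stochastically dominated by (and, when the codomain is exactly $\pistar(M)$, equal in distribution to) the number of fixed points of a uniformly random permutation of an $m$-element set. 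The key classical fact is that the number of fixed points of a uniform random permutation of $[m]$ has mean $1$ and is, for instance, stochastically bounded so that $\P{\#\text{fixed points} \ge t}$ decays faster than any exponential in $t$; more crudely, a union bound gives $\P{\#\text{fixed points} \ge t} \le \binom{m}{t}\frac{(m-t)!}{m!} = \frac{1}{t!}$, which is $o(1)$ for any $t = \omega(1)$ and in particular whenever $t = \eps n$ with $\eps > 0$ fixed (since then $m \le n$ forces the probability to be at most $1/\lfloor \eps n\rfloor! = o(1)$). This handles part (ii): regardless of how large $M$ is, $\P{\ov(\mubar,\pistar) > \eps n} \le 1/\lceil \eps n \rceil! = o(1)$.

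For part (i), I would argue by contradiction. Suppose $\P{M = \phi}$ does not tend to $1$; then there is a constant $c > 0$ and a subsequence along which $\P{M \neq \phi} \ge c$. Precision requires $\P{\rho(\mubar) = 1} = 1 - o(1)$, i.e. $\P{\ov(\mubar,\pistar) = |M|} = 1 - o(1)$. But on the event $\{M \neq \phi\}$, we have $|M| \ge 1$, and since $M \subseteq \B_1 \cup \B_2'$ — a set whose size is tightly concentrated around $(\gamma - \spl(1-\spl)\gamma^2)n = \Theta(n)$ by~\Cref{lem: hypgeom} when $\gamma > 0$ — one should look at the conditional probability that a uniform random bijection on an $m$-element set is a perfect matching (all fixed points). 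That probability is exactly $1/m!$ when the codomain equals $\pistar(M)$, and in general no larger than the probability that all $m$ chosen images coincide with the $\pistar$-images, which is at most $(m!)^{-1}$ times a combinatorial factor bounded by $1$. Hence, conditioned on $|M| = m \ge 1$, $\P{\rho(\mubar) = 1 \mid |M| = m} \le 1$ trivially but $\le 1/2$ as soon as $m \ge 2$; and for $m = 1$ the guess is correct with probability $1/|\text{codomain}|$, which is $\Theta(1/n) = o(1)$ since the codomain has $\Theta(n)$ elements. Therefore $\P{\rho(\mubar) = 1 \mid M \neq \phi} = o(1)$, so $\P{\rho(\mubar) = 1} \le \P{M = \phi} + o(1)$; combined with precision this forces $\P{M = \phi} = 1 - o(1)$. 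The edge case $\gamma = 0$ is trivial since then $\B_1 \cup \B_2' = \phi$ forces $M = \phi$ deterministically.

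The main obstacle is the bookkeeping around the codomain: the estimator is allowed to choose the codomain (an arbitrary $m$-subset of $\pistar(\B_1 \cup \B_2')$) possibly in a data-dependent way before the uniform guessing kicks in, so one must be careful to condition correctly and verify that conditioning on any fixed admissible codomain only makes the "perfect match" event less likely than in the all-fixed-points benchmark — i.e. that overlap is maximized in distribution when the codomain is exactly $\pistar(M)$. This is intuitively clear (choosing images outside $\pistar(M)$ only wastes guesses) and can be made rigorous by a coupling that pairs each guessed bijection with the corresponding permutation of $M \cup (M \setminus \text{preimages})$, but it is the one spot where the proof needs genuine care rather than a routine citation. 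Everything else reduces to the standard fixed-point count for uniform random permutations and the concentration of $|\B_1 \cup \B_2'|$ supplied by~\Cref{lem: hypgeom}.
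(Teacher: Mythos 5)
Your proposal misreads the random-guessing model in a way that creates a genuine gap in part (i). The lemma fixes the codomain of $\mubar$ to be the \emph{full} set $\pistar(\B_1 \cup \B_2')$, and ``random guessing'' conditioned on domain and codomain means $\mubar$ is a uniformly random injection from $\dom(\mubar)$ into that set of size $N = |\B_1 \cup \B_2'| = \Omega(n)$. The paper's proof uses this directly: for $m = |\dom(\mubar)| \ge 1$, the probability of a perfect match is $\frac{(N-m)!}{N!} \le 1/N = o(1)$, uniformly in $m$, so precision forces $\P{\dom(\mubar) = \phi} = 1-o(1)$. You instead model $\mubar$ as a uniform bijection from $\dom(\mubar)$ onto a possibly data-dependent $m$-element image inside $\pistar(\B_1 \cup \B_2')$ --- an extra layer the lemma does not permit. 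Under that model the all-correct probability is $1/m!$, which is $1$ when $m=1$ and $1/2$ when $m=2$; so if the image could be chosen adversarially, part~(i) would in fact be \emph{false} (pick $|M|=1$ and the image $\{\pistar(i)\}$). Your own argument then splits inconsistently: for $m=1$ you revert to the injection count $1/|\text{codomain}|$, while for $m \ge 2$ you only establish $\le 1/2$, and then assert $\P{\rho(\mubar)=1 \mid M \neq \phi} = o(1)$, which the bounds you actually proved do not deliver. The ``coupling to show the image $\pistar(M)$ is worst case'' you gesture at would, if carried out, sharpen the conclusion to $1/m!$ --- which still does not vanish for bounded $m$, so the contradiction route would still need the full-codomain injection computation in the end.

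Part~(ii) is essentially correct and in fact stronger than necessary: the bound $\P{\ov(\mubar,\pistar) \ge t} \le 1/t!$ holds under the correct injection model as well (each specific $t$-subset is all-correct with probability $\frac{(N-t)!}{N!} \le N^{-t}$, and there are at most $\binom{m}{t} \le m^t/t! \le N^t/t!$ such subsets), so $\P{\ov > \eps n} \le 1/\lceil \eps n \rceil! = o(1)$. The paper takes a different and more elementary route: it computes $\E{\ov(\mubar,\pistar)} \le 1$ and $\mathrm{Var}(\ov(\mubar,\pistar)) \le 1$ and applies Chebyshev to obtain $\P{\ov(\mubar,\pistar) \ge \eps n} \le (\eps n - 1)^{-2}$. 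Both suffice; your bound decays much faster, but this extra strength is not used elsewhere.

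In short: part (ii) works (by an overkill but valid route), but part (i) needs to be redone under the intended model --- a uniform random injection into the full codomain of size $\Omega(n)$ --- which makes the all-correct probability $O(1/n)$ for every nonempty domain, without any case split on $m$.
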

\vspace{0.5 in}

\begin{proof}
    (i) Note that $\dom(\mubar) \subseteq \B_1\cup\B_2'$ by definition. Since each element in $\dom(\mubar)$ is mapped randomly to an element in $\pistar(\B_1 \cup \B_2')$, and since the mapping is injective, it follows that the probability that all nodes in $\dom(\mubar)$ are correctly matched is given by 
    \alns{ 
    \P{\bigcap_{i \in \dom(\mubar)} \cbr{ \mubar(i) = \pistar(i)}} &= \frac{1}{|\B_1 \cup \B_2'|} \times \frac{1}{|\B_1 \cup \B_2'|-1}\times\cdots\times \frac{1}{|\B_1 \cup \B_2'| - |\dom(\mubar)| + 1} \\
    &= \frac{\pbr{|\B_1 \cup \B_2'| - |\dom(\mubar)|}!}{|\B_1 \cup \B_2'|!}.
    }
    Since $|\B_1\cup\B_2'| \geq \max\pbr{\spl,1-\spl}\gamma n = \Omega(n)$, it follows that the above probability is $o(1)$ if $|\dom(\mubar)| \geq 1$, and equals $1$ if and only if $|\dom(\mubar)| = 0$. Since $\overline{\mathcal{E}}$ achieves precise recovery, the above probability must be $1 - o(1)$, which then implies $\P{\dom(\mubar) = \phi} = 1-o(1)$.

    (ii) For any node $m \in \dom(\mubar)$, let $X_m$ denote the indicator event $\mathds{1}\cbr{\mubar(m) = \pistar(m)}$. Notice that $\ov(\mubar,\pistar) = \sum_{m\in\dom(\mubar)}X_m$, and so it follows that
    \alns{ 
    \E{\ov(\mubar,\pistar)} &= \sum_{m\in \dom(\mubar)}\E{ X_m} = \sum_{m\in\dom(\mubar)} \P{\mubar(m) = \pistar(m)} = \sum_{m\in\dom(\mubar)}\frac{1}{|\dom(\mubar)|} = 1, \\
    \text{Var}(\ov(\mubar,\pistar)) &= \E{Y^2} - 1 = \pbr{\sum_{m_1\in \dom(\mubar)}\sum_{m_2\in\dom(\mubar)} \E{X_{m_1}X_{m_2}}}-1 \stackrel{\text{(a)}}{=} 1,
    }
    where~(a) uses the fact that
    \alns{ 
    \E{X_{m_1} X_{m_2}} = \P{ \cbr{\mubar(m_1) = \pistar(m_1)}\cap\cbr{\mubar(m_2) = \pistar(m_2)}} = 
    \begin{cases}
        \frac{1}{|\dom(\mubar)|} \times \frac{1}{|\dom(\mubar)|-1}, & \text{if } m_1 \neq m_2 \\
        \frac{1}{|\dom(\mubar)|}, &\text{if } m_1 = m_2.
    \end{cases}
    }
    Thus, Chebyshev's inequality yields
    \alns{ 
    \P{ \ov(\mubar,\pistar) \geq \eps n} \leq \P{\left|\ov(\mubar,\pistar) - 1\right| \geq \eps n - 1} \leq \frac{1}{(\eps n - 1)^2} = o(1),
    }
    which concludes the proof.
\end{proof}

\begin{lemma} \label{lem: hypgeom}
    Let $n$ be a positive integer, and let $p,s,\gamma,\spl$ be in $[0,1]$. Let $(\B_1,\B_2,\tG_1,\tG_2,\pistar)$ be distributed according to $\WCER(n,p,s,\gamma,\spl)$, and let $\B_2'$ denote the pre-image of $\B_2$ under $\pistar$. Then, for any $\eps > 0$
    \alns{ 
    \P{ \left| \frac{|\B_1\cap\B_2'|}{n}  - \spl(1-\spl)\gamma^2 \right| > \eps } = o(1).
    }
\end{lemma}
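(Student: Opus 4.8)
The plan is to show that $|\B_1 \cap \B_2'|$ concentrates around its mean using a second-moment (Chebyshev) argument, after identifying the distribution of this quantity. First I would observe that, by the definition of the $\WCER$ model, $\B_1$ is a uniformly random subset of $[n]$ of size $\spl\gamma n$, chosen independently of $\pistar$ and of $\B_2$; and $\B_2'$ is the pre-image under $\pistar$ of $\B_2$, which has size $(1-\spl)\gamma n$. Since $\pistar$ is a uniformly random permutation independent of everything else, $\B_2'$ is itself a uniformly random subset of $[n]$ of size $(1-\spl)\gamma n$, independent of $\B_1$. Hence $|\B_1 \cap \B_2'|$ is exactly the number of successes when drawing $\spl\gamma n$ elements without replacement from a population of $n$ in which $(1-\spl)\gamma n$ are labeled successes, i.e. $|\B_1 \cap \B_2'| \sim \mathsf{HypGeom}(n, \spl\gamma n, (1-\spl)\gamma n)$. (One must be mildly careful that $\spl\gamma n$ etc.\ are integers; this is a standard rounding technicality that I would dispatch with a sentence.)

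Next I would invoke the standard formulas for the hypergeometric distribution: if $H \sim \mathsf{HypGeom}(N,K,M)$ then $\E{H} = KM/N$ and $\mathrm{Var}(H) = \frac{KM(N-K)(N-M)}{N^2(N-1)}$. Applying these with $N = n$, $K = \spl\gamma n$, $M = (1-\spl)\gamma n$ gives $\E{|\B_1\cap\B_2'|} = \spl(1-\spl)\gamma^2 n$ and $\mathrm{Var}(|\B_1\cap\B_2'|) = O(n)$ (the variance is at most $n/4$ by the crude bound $K(N-K) \le N^2/4$ together with $(N-M)/N \le 1$, so in particular it is $\Theta(n)$ but an $O(n)$ bound suffices).

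Finally, Chebyshev's inequality yields, for any fixed $\eps > 0$,
\alns{
\P{ \left| |\B_1 \cap \B_2'| - \spl(1-\spl)\gamma^2 n \right| > \eps n } \leq \frac{\mathrm{Var}(|\B_1\cap\B_2'|)}{\eps^2 n^2} = \frac{O(n)}{\eps^2 n^2} = O\!\left(\frac{1}{\eps^2 n}\right) = o(1),
}
and dividing the event inside the probability by $n$ gives exactly the claimed statement. I do not anticipate a genuine obstacle here; the only point requiring a little care is justifying that $\B_2'$ has the claimed uniform distribution independent of $\B_1$ — this follows because $\pistar$ is uniform and independent of $(\B_1,\B_2)$, so conditioning on $\B_2$ (a fixed-size set) the image set $\pistar^{-1}(\B_2)$ is uniform over all subsets of that size — and the integrality caveat on $\spl\gamma n$, which does not affect the asymptotics.
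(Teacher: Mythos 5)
Your proposal is correct and takes essentially the same approach as the paper: identify $|\B_1\cap\B_2'|$ as hypergeometric, compute its mean $\spl(1-\spl)\gamma^2 n$, bound its variance by $O(n)$, and apply Chebyshev. The only differences are cosmetic — you use a crude $O(n)$ variance bound where the paper writes out the exact hypergeometric variance, and you omit the paper's inessential WLOG reduction to $\spl \geq 1/2$.
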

\begin{proof}
    Without loss of generality, assume that $\spl \geq 1/2$, and that $\B_1 = \{1,2,\cdots,\spl\gamma n\}$, since the number of elements in $\B_1 \cap \B_2'$ is independent of the elements in $\B_1$. Since $\pistar$ is independent of the sets $\B_1$ and $\B_2$, selecting $\B_2$ uniformly at random is equivalent to selecting $\B_2'$ uniformly at random. View $\B_2'$ as being constructed by sampling $(1-\spl)\gamma n$ nodes from $[n]$ without replacement. A node $i$ sampled this way is labeled a success if $i \in \B_1$ and a failure otherwise. The number of successes after $(1-\spl)\gamma n$ trials is exactly $|\B_1\cap\B_2'|$, and is described by the hypergeometric distribution $\mathsf{HypGeom}(n,(1-\spl)\gamma n,\spl\gamma n)$. Using standard formulas for the mean and variance of the hypergeometric distribution yields
    \alns{ 
    \E{|\B_1\cap\B_2'|} &= \spl(1-\spl)\gamma^2 n, \\ 
    \text{Var}\pbr{|\B_1\cap\B_2'|} &= \spl(1-\spl)\gamma^2(1-\spl\gamma)(1-(1-\spl)\gamma) \times \frac{n^2}{n-1}.
    }
    Therefore, applying Chebyshev's inequality to $|\B_1\cap\B_2'|/n$ yields that for any constant $\eps > 0$:
    \alns{ 
    \P{  \left| \frac{|\B_1\cap\B_2'|}{n} - \spl(1-\spl)\gamma \right| \geq \eps} \leq \spl(1-\spl)\gamma^2(1-\spl\gamma)(1-(1-\spl)\gamma)\times \frac{1}{\eps^2 (n-1)} = o(1),
    }
    as desired.
\end{proof}

\subsection{Proof of \texorpdfstring{\Cref{thm: Achievability-WCER}(i)}{Theorem}} \label{subsec: deferred-proof}
This subsection analyzes the $k$-core estimator. To show that the $k$-core estimator is precise, it suffices to establish that with high probability, the output of the $k$-core estimator $\muhat_k$ is such that $\dom(\muhat_k)$ is exactly the $k$-core of the true intersection graph $\tG_1\wedge_{\pistar}\tG_2$, and furthermore that the mapping $\muhat_k$ agrees with $\pistar$ on its domain.~\Cref{thm: max-core-is-k-core} and~\Cref{lem: xi-is-small} together establish this. A similar result is proved in~\cite{cullina2019kcore} for Erd{\H{o}}s-R{\'e}nyi graphs when no adversary is present. The techniques introduced there were extended to analyze the $k$-core estimator for graph matching in correlated stochastic block models~\cite{gaudio2022exact} and inhomogeneous random graphs~\cite{racz2023matching}. These techniques are adapted to the $\WCER$ model below. 

\begin{definition}[Weak $k$-core matching]
    Let $H_1$ and $H_2$ be two graphs, and let $\mustar$ and $\mu$ be matchings. We say that $\mu$ is a weak $k$-core matching of $H_1$ and $H_2$ with respect to $\mustar$ if the average degree in $H_1 \wedge_\mu H_2$ of all the nodes $i \in M$ such that $\mu(i) \neq \mustar(i)$ is at least $k$, i.e.
    \alns{ 
    f(\mu;\mustar, H_1,H_2,k) := \sum_{i\in \dom(\mu)\colon \mu(i)\neq\mustar(i)} \deg_{H_1 \wedge_{\mu} H_2 }(i) \geq k \times \left| \cbr{i \in \dom(\mu) \colon \mu(i)\neq\mustar(i)}\right|.
    }
    When the context is clear, we omit the parameters from the notation and simply use $f(\mu)$.
\end{definition}

\begin{definition}[Maximal matching]
    Let $\mustar$ and $\mu$ be matchings. A matching $\mu$ is a maximal matching with respect to $\mustar$ (or simply, a $\mustar$-maximal matching) if for every $i \in \dom(\mustar)$, either $i \in \dom(\mu)$ or $\mustar(i) \in \mathsf{range}(\mu)$.
\end{definition}

\begin{remark}
    Every matching can be uniquely extended to a $\mustar$-maximal matching.
\end{remark}

Denote by $\mathcal{M}(\mustar,d)$ the set of all matchings $\mu$ which are $\mustar$-maximal and such that there are exactly $d$ nodes in $M$ for which the images under $\mu$ and $\mustar$ disagree. Let $\mathcal{M}(\mustar) := \bigcup_{d=0}^n \mathcal{M}(\mustar,d)$.

\begin{lemma} \label{lem: suffices-to-check-maximal-matchings}
    Let $\mustar$ be a matching, and suppose that $\mu$ is a $k$-core matching of two graphs $H_1$ and $H_2$. Then, there exists a matching $\mu' \in \mathcal{M}(\mustar)$ such that $\mu'$ is a weak $k$-core matching. 
\end{lemma}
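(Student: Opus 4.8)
The plan is to take the given $k$-core matching $\mu$, pass to its (unique, by the Remark) $\mustar$-maximal extension $\mu'$, and verify the weak $k$-core inequality directly. Explicitly, I would let $D := \{\, i \in \dom(\mustar) : i \notin \dom(\mu) \text{ and } \mustar(i) \notin \mathsf{range}(\mu) \,\}$ be the defect set, and define $\mu'$ to agree with $\mu$ on $\dom(\mu)$ and to satisfy $\mu'(i) = \mustar(i)$ for $i \in D$. This $\mu'$ is an injective function, because $\mustar$ is injective and the images $\{\mustar(i): i \in D\}$ lie outside $\mathsf{range}(\mu)$ by the definition of $D$; and it is $\mustar$-maximal, since for any $i \in \dom(\mustar)$ one of the three alternatives $i \in \dom(\mu)$, $\mustar(i) \in \mathsf{range}(\mu)$, or $i \in D \subseteq \dom(\mu')$ must hold. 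Hence $\mu' \in \mathcal{M}(\mustar)$.

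Two facts then finish the proof. First, since $\mu'$ agrees with $\mu$ on $\dom(\mu)$, the intersection graph $H_1 \wedge_\mu H_2$ is precisely the subgraph of $H_1 \wedge_{\mu'} H_2$ induced on $\dom(\mu)$; consequently $\deg_{H_1 \wedge_{\mu'} H_2}(i) \geq \deg_{H_1 \wedge_{\mu} H_2}(i)$ for every $i \in \dom(\mu)$, and the latter is at least $k$ because $\mu$ is a $k$-core matching. Second, the extension does not enlarge the disagreement set: on the appended nodes $D$ we have $\mu' = \mustar$ by construction, so $\{\, i \in \dom(\mu') : \mu'(i) \neq \mustar(i)\,\} = \{\, i \in \dom(\mu) : \mu(i) \neq \mustar(i)\,\} =: S$.

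Combining the two facts, $f(\mu';\mustar,H_1,H_2,k) = \sum_{i \in S} \deg_{H_1 \wedge_{\mu'} H_2}(i) \geq k|S| = k\,|\{\, i \in \dom(\mu') : \mu'(i) \neq \mustar(i)\,\}|$, which is exactly the statement that $\mu'$ is a weak $k$-core matching of $H_1$ and $H_2$ with respect to $\mustar$; in fact $\mu' \in \mathcal{M}(\mustar,|S|)$, which is even slightly stronger than claimed.

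There is no genuinely hard step here; the one point requiring care is to use the precise shape of the $\mustar$-maximal extension — that it appends the map $i \mapsto \mustar(i)$ on the defect set $D$ and nothing else. It is exactly this that simultaneously keeps the disagreement set $S$ unchanged and ensures the newly added nodes contribute nothing to $f$. If the defect nodes were instead allowed to be matched arbitrarily, newly appended low-degree nodes disagreeing with $\mustar$ could in principle drag the relevant average below $k$, so pinning down the extension is the only subtlety.
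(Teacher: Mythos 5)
Your proof is correct and follows the same route as the paper: extend $\mu$ to its unique $\mustar$-maximal matching $\mu'$, note that the disagreement set is unchanged (the appended nodes satisfy $\mu' = \mustar$), and use the fact that $H_1 \wedge_\mu H_2$ is the induced subgraph of $H_1 \wedge_{\mu'} H_2$ on $\dom(\mu)$, so degrees can only increase. The paper merely states these facts in a single chain of inequalities whereas you spell out the construction and injectivity/maximality checks explicitly, but the argument is the same.
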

\begin{proof}
    Since $\mu$ is a $k$-core matching, it is also a weak $k$-core matching with respect to $\mustar$. Let $\mu'$ denote the unique extension of $\mu$ to a $\mustar$-maximal matching. Since the extension only involves adding elements from $\mustar$, it follows that 
    \alns{ 
    k\left|\cbr{i: \mu'(i) \neq \mustar(i)} \right| &= k \left|\cbr{i: \mu(i) \neq \mustar(i) }\right|  \\
    &\leq \sum_{j \in \cbr{i: \mu(i) \neq \mustar(i) }} \deg_{H_1 \wedge_{\mu} H_2}(j)\\
    &\leq \sum_{j \in \cbr{i: \mu'(i) \neq \mustar(i) }} \deg_{H_1\wedge_{\mu'}H_2}(j),
    }
    and so the average degree in $H_1 \wedge_{\mu'} H_2$ of all the nodes $i \in \dom(\mu')$ such that $\mu'(i) \neq \mustar(i)$ it at least $k$. Since $\mu'$ is a $\mustar$-maximal matching by construction, the result follows. 
\end{proof}

Lemma~\ref{lem: suffices-to-check-maximal-matchings} establishes that if no weak $k$-core matchings exist in $\mathcal{M}(\mustar,d)$ for any $d > 0$, then any $k$-core matching must agree with $\mustar$. The main advantage of restricting the search to $\mathcal{M}(\mustar)$ is that there are much fewer $\mustar$-maximal matchings than matchings.

\begin{lemma} \label{lem: size-of-calM}
    $\left|\mathcal{M}(\mustar,d)\right| \leq \frac{n^{2d}}{d!}$.
\end{lemma}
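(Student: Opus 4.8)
The plan is to set up an explicit injection from $\mathcal{M}(\mustar,d)$ into the family of $d$-element subsets of $[n]\times[n]$. Since that family has size $\binom{n^2}{d}\le n^{2d}/d!$, the bound follows at once. To each $\mu\in\mathcal{M}(\mustar,d)$ I would associate the set $S(\mu):=\{i\in\dom(\mu)\colon\mu(i)\neq\mustar(i)\}$ of its $d$ disagreement nodes and then the $d$ pairs $\Phi(\mu):=\{(i,\mu(i))\colon i\in S(\mu)\}$; these pairs have pairwise distinct first coordinates, so $\Phi(\mu)$ is genuinely a $d$-subset of $[n]\times[n]$. The whole content of the argument is then to show that $\mu$ can be recovered from $\Phi(\mu)$ together with the fixed matching $\mustar$, which makes $\Phi$ injective.

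For the recovery step I would first observe that off $S(\mu)$ the matching $\mu$ agrees with $\mustar$ (and in particular any $i\in\dom(\mu)\setminus S(\mu)$ lies in $\dom(\mustar)$), so it suffices to pin down the set $\dom(\mu)\setminus S(\mu)$. My claim is that, writing $B:=\mu(S(\mu))$ for the set of second coordinates appearing in $\Phi(\mu)$, one has $\dom(\mu)\setminus S(\mu)=\{\,i\in\dom(\mustar)\setminus S(\mu)\colon\mustar(i)\notin B\,\}$. The inclusion ``$\subseteq$'' is immediate from injectivity of $\mu$, since for $i\in\dom(\mu)\setminus S(\mu)$ we have $\mu(i)=\mustar(i)\notin\mu(S(\mu))=B$. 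For ``$\supseteq$'', given such an $i$ and supposing $i\notin\dom(\mu)$, I would invoke $\mustar$-maximality to get $\mustar(i)\in\mathrm{range}(\mu)$, say $\mustar(i)=\mu(j)$ with $j\in\dom(\mu)$; since $\mustar(i)\notin B$ this forces $j\notin S(\mu)$, hence $\mu(j)=\mustar(j)$, and injectivity of $\mustar$ then gives $j=i$, a contradiction. This reconstructs $\dom(\mu)$, hence $\mu$, from $\Phi(\mu)$ and $\mustar$, and $|\mathcal{M}(\mustar,d)|\le\binom{n^2}{d}\le n^{2d}/d!$.

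The one place where care is genuinely needed is this recovery of $\dom(\mu)$: it is precisely what $\mustar$-maximality buys us, and without that hypothesis the disagreement set $\Phi(\mu)$ would not determine $\mu$ (one could freely add or drop agreeing pairs). I would also note, as a trivial remark, that any node of $\dom(\mu)$ or of $S(\mu)$ lying outside $\dom(\mustar)$ causes no trouble, since such a node automatically belongs to $S(\mu)$ and is therefore recorded in $\Phi(\mu)$. Everything else reduces to the elementary estimate $\binom{n^2}{d}\le n^{2d}/d!$.
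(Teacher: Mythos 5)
Your proof is correct and follows the same essential route as the paper: encode each $\mustar$-maximal matching by its set of $d$ disagreement pairs and bound the number of such sets. The one substantive difference is that you prove in detail that this encoding is injective (the reconstruction of $\dom(\mu)$ via $\mustar$-maximality), whereas the paper simply asserts that a $\mustar$-maximal matching "can be identified" by its disagreement set; your argument fills in exactly the step the paper leaves implicit, and your direct bound $\binom{n^2}{d}\le n^{2d}/d!$ is an equally valid replacement for the paper's count $d!\binom{n}{d}^2\le n^{2d}/d!$.
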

\begin{proof}
    Any $\mustar$-maximal matching can be identified by the set $\{(i,\mu(i)) : \mu(i) \neq \mustar(i) \}$. Since the cardinality of this set is $d$, there are $\binom{n}{d}$ choices of $i$ and at most $\binom{n}{d}$ choices of $\mu(i)$ that preserve the injectivity of $\mu$. Further, there are at most $d!$ ways to match up the $d$ nodes according to $\mu$ that preserve the injectivity of $\mu$. It follows that
    \alns{ 
    \left| \mathcal{M}(\mustar,d)\right| \leq d! \pbr{\binom{n}{d}}^2 \leq d! \pbr{\frac{n^d}{d!}} = \frac{n^{2d}}{d!}.
    }
\end{proof}

\begin{theorem} \label{thm: max-core-is-k-core}
Let $(\B_1,\B_2,\tG_1,\tG_2,\pistar)$ be distributed according to $\WCER(n,p,s,\gamma,\spl)$. Let $\muhat_k$ denote the matching output by the $k$-core estimator $\widehat{\mathcal{E}}_k(\tG_1,\tG_2)$. Then,
\alns{ 
\P{\dom(\muhat_k) = \core_k\pbr{\tG_1 \wedge_{\pistar} \tG_2} \text{ and } \muhat_k = \pistar\mid_{\dom(\muhat_k)}} \geq 2 - \exp\pbr{n^2 \xi},
}
where 
\alns{ 
\xi := \max_{1\leq d\leq n} \max_{(M,\mu)\in\mathcal{M}(d)}\P{f(\mu) \geq kd}^{1/d}.
}
\end{theorem}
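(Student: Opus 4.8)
The plan is a first-moment bound over $\mustar$-maximal matchings, built on the structural reduction in \Cref{lem: suffices-to-check-maximal-matchings} and the counting bound in \Cref{lem: size-of-calM}. By the relabeling symmetry of the $\WCER$ model I would first condition on $\pistar = \id$ throughout and write $\mustar$ for $\pistar$, so that each $\mathcal{M}(\mustar,d)$ is a deterministic family of matchings and the probabilities appearing in the definition of $\xi$ are unambiguous. For $\mu \in \mathcal{M}(\mustar,d)$ one has $\left|\cbr{i \in \dom(\mu): \mu(i)\neq\mustar(i)}\right| = d$, so ``$\mu$ is a weak $k$-core matching of $\tG_1,\tG_2$ with respect to $\mustar$'' is exactly the event $\cbr{f(\mu)\geq kd}$. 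A union bound over $1\leq d\leq n$ and over $\mu\in\mathcal{M}(\mustar,d)$ then gives that the probability that some $\mu\in\bigcup_{d\geq 1}\mathcal{M}(\mustar,d)$ is a weak $k$-core matching is at most
$$\sum_{d=1}^{n}\sum_{\mu\in\mathcal{M}(\mustar,d)}\P{f(\mu)\geq kd} \;\leq\; \sum_{d=1}^{n}|\mathcal{M}(\mustar,d)|\,\xi^{d} \;\leq\; \sum_{d=1}^{n}\frac{n^{2d}}{d!}\,\xi^{d} \;\leq\; \exp(n^2\xi) - 1,$$
where the first inequality is immediate from the definition of $\xi$ (so $\P{f(\mu)\geq kd}\leq\xi^{d}$), the second is \Cref{lem: size-of-calM}, and the last is the Taylor series of $\exp$, all of whose terms are nonnegative.

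Next I would show that on the complementary event the two assertions of the theorem hold, so that the conclusion follows with probability at least $1 - (\exp(n^2\xi)-1) = 2 - \exp(n^2\xi)$. Since the estimator's output $\muhat_k$ is a $k$-core matching, \Cref{lem: suffices-to-check-maximal-matchings} produces a $\mustar$-maximal matching $\mu'\in\mathcal{M}(\mustar)$ that is a weak $k$-core matching and whose disagreement set with $\mustar$ coincides with that of $\muhat_k$; on the good event $\mu'$ cannot lie in $\mathcal{M}(\mustar,d)$ for any $d\geq 1$, so $\mu'\in\mathcal{M}(\mustar,0)$ and hence $\muhat_k = \mustar\mid_{\dom(\muhat_k)}$. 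Given this agreement, $\tG_1\wedge_{\muhat_k}\tG_2$ is precisely the subgraph of $\tG_1\wedge_{\mustar}\tG_2$ induced on $\dom(\muhat_k)$, and it has minimum degree at least $k$, so $\dom(\muhat_k)\subseteq\core_k(\tG_1\wedge_{\mustar}\tG_2)$ by maximality of the $k$-core. Conversely $\mustar\mid_{\core_k(\tG_1\wedge_{\mustar}\tG_2)}$ is itself a $k$-core matching, so by the defining (largest-domain) property of $\widehat{\mathcal{E}}_k$ we get $|\dom(\muhat_k)|\geq|\core_k(\tG_1\wedge_{\mustar}\tG_2)|$, and the two facts together force $\dom(\muhat_k)=\core_k(\tG_1\wedge_{\mustar}\tG_2)$.

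I expect the main obstacle to lie not in this theorem but in the surrounding bookkeeping: identifying ``weak $k$-core matching'' with the event $\cbr{f(\mu)\geq kd}$ so that the union bound runs over the correct events, and pinning down the domain-equality step carefully from the definition of $\widehat{\mathcal{E}}_k$ (establishing both the containment $\dom(\muhat_k)\subseteq\core_k(\tG_1\wedge_{\mustar}\tG_2)$ and the reverse cardinality inequality). The genuinely hard analytic work — showing the bound is useful, i.e. that $\xi = o(n^{-2})$ so that $2-\exp(n^2\xi) = 1-o(1)$ — is outside the scope of this statement and is carried out in the subsequent lemmas that estimate $\P{f(\mu)\geq kd}$ via a Chernoff bound on the degrees in the intersection graph.
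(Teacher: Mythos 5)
Your proof is correct and follows the same approach as the paper: a union bound over $\pistar$-maximal matchings $\mathcal{M}(\pistar,d)$ with $d\ge 1$, combined with \Cref{lem: suffices-to-check-maximal-matchings} and \Cref{lem: size-of-calM} to bound the failure probability by $\exp(n^2\xi)-1$, followed by the same two-sided argument (induced min-degree containment plus the cardinality inequality from $\pistar\vert_{M^*}$ being a $k$-core matching) to get $\dom(\muhat_k)=\core_k(\tG_1\wedge_{\pistar}\tG_2)$ on the good event. The only cosmetic difference is that you apply \Cref{lem: suffices-to-check-maximal-matchings} directly to $\muhat_k$ rather than to an arbitrary $k$-core matching, and the paper phrases the domain containment as a contradiction whereas you invoke the standard fact that any vertex set inducing a subgraph of min degree $\ge k$ lies inside the $k$-core; both yield the identical bound.
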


\begin{proof}
Let $\mathcal{K}$ denote the set of all $k$-core matchings of $\tG_1$ and $\tG_2$, and let $\mathcal{H}$ denote the event
\alns{ 
\mathcal{H} := \bigcap_{\mu \in \mathcal{K}} \bigcap_{i \in \dom(\mu)} \cbr{\mu(i) = \pistar(i) }
}
First, since $\muhat_k$ itself is a $k$-core matching, the event $\mathcal{H}$ implies that $\muhat_k(i) = \pistar(i)$ for all $i \in\dom(\muhat_k)$. Let $M^* = \core_k(\tG_1\wedge_{\pistar}\tG_2)$. We now show the event $\mathcal{H}$ implies that $\dom(\muhat_k) = M^*$. 

First, note that $\pi^* \vert_{M^*}$ is a $k$-core matching. Therefore, by the maximality property in the definition of the $k$-core estimator, we have that $|\dom(\muhat_k)| \geq |M^*|$. Therefore, to show that $\dom(\muhat_k) = M^*$, it suffices to simply show that $\dom(\muhat_k) \subseteq M^*$ whenever $\mathcal{H}$ occurs. Assume to the contrary that $L := \dom(\muhat_k) \setminus M^*$ is non-empty. Then, on the event $\mathcal{H}$, the subgraph of $\tG_1\wedge_{\pistar}\tG_2$ that is induced on $M^* \cup L$ also has minimum degree $k$, contradicting the maximality of $M^*$. Therefore, $\dom(\muhat_k) = M^*$ if $\mathcal{H}$ occurs. It follows that
\alns{ 
\P{\dom(\muhat_k) = \core_k\pbr{\tG_1 \wedge_{\pistar} \tG_2} \text{ and } \muhat_k = \pistar\mid_{\dom(\muhat_k)}} \geq \P{\mathcal{H}}.
}
To prove the theorem, it suffices to show that $\P{\mathcal{H}^c} \leq \exp\pbr{n^2\xi} - 1$. For any graph $G$, let $d_{\min}(G)$ denote its minimum degree. Indeed, 
\alns{ 
\P{\mathcal{H}^c} &= \P{ \bigcup_{\mu \in \mathcal{K}} \bigcup_{i \in \dom(\mu)} \cbr{ \mu(i) \neq \pistar(i)} } \\
& = \P{\bigcup_{\substack{\mu \text{ such that }\\ \exists i \in \dom(\mu): \mu(i) \neq \pistar(i) }} d_{\min}(\tG_1\wedge_{\mu}\tG_2) \geq k} \\
& \stackrel{\text{(a)}}{\leq} \sum_{d=1}^n \P{\bigcup_{\substack{\mu \text{ such that }\\ \left|\{ i \in \dom(\mu): \mu(i) \neq \pistar(i) \}\right| = d}} f(\mu) \geq kd } \\
& \stackrel{\text{(b)}}{\leq} \sum_{d=1}^n \P{\bigcup_{\mu \in \mathcal{M}(\pistar,d)} f(\mu) \geq kd} \\
& \stackrel{\text{(c)}}{\leq} \sum_{d=1}^n |\mathcal{M}(d)|\pbr{ \max_{1\leq d\leq n}\max_{\mu\in \mathcal{M}(\pistar,d)}\P{f(\mu) \geq kd} }\\
& \stackrel{\text{(d)}}{\leq} \sum_{d=1}^n \frac{(n^2\xi)^d}{d!} \\
& \leq \exp\pbr{n^2\xi} - 1.
}
where (a) partitions the set of all matchings based on the number of disagreements of the matching with $\pistar$ and uses a union bound, (b) follows from~\Cref{lem: suffices-to-check-maximal-matchings}, (c) follows from a union bound, and (d) is from the definition of $\xi$ and an application of~\Cref{lem: size-of-calM}.
\end{proof}

Next, we show that $\xi$ decays rather quickly under appropriate conditions. The proof below follows~\cite{gaudio2022exact} and~\cite{racz2023matching}, although their focus is the stochastic block model and the inhomogeneous random graph model respectively. Below, we adapt the argument to the $\WCER$ model.

\begin{lemma} \label{lem: xi-is-small}
Let $(\B_1,\B_2,\tG_1,\tG_2,\pistar)$ be distributed according to $\WCER(n,p,s,\gamma,\spl)$. Let $\mathcal{M}(\pistar,d)$ be the set of all $\pistar$-maximal matchings $\mu$ such that $|\{i\in \dom(\mu)\colon \mu(i)\neq \pistar(i)\}| = d$. 
\alns{ 
\xi = \max_{1\leq d\leq n} \max_{\mu\in\mathcal{M}(\pistar,d)}\P{f(\mu) \geq kd}^{1/d}.
}
Suppose that $p = \frac{C\log n}{n}$ for some $C > 0$. If $k \geq 13$, then $\xi = o(n^{-2})$.
\end{lemma}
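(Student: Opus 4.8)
The plan is to reduce the claim to a binomial tail bound. Fix $d\in[n]$ and $\mu\in\mathcal M(\pistar,d)$, set $D:=\{i\in\dom(\mu)\colon\mu(i)\ne\pistar(i)\}$ so that $|D|=d$, and aim to show $\P{f(\mu)\ge kd}\le(\delta_n/n^2)^{d}$ for some $\delta_n=o(1)$ not depending on $d$ or $\mu$. Since $\xi=\sup_{d\ge1}\sup_{\mu\in\mathcal M(\pistar,d)}\P{f(\mu)\ge kd}^{1/d}$, this gives $\xi\le\delta_n/n^2=o(n^{-2})$, exactly the form needed in \Cref{thm: max-core-is-k-core} (it forces $\exp(n^2\xi)-1=o(1)$). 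I would condition throughout on $\pistar$ and on the corrupted vertex sets $\B_1,\B_2$; all bounds below are uniform over this conditioning, and under the $\WCER$ model the parent edges, the two subsampling families, and the resampled edge statuses remain mutually independent with their stated marginals after this conditioning, so it can be dropped at the end.

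The core step is to expose the (very limited) dependency structure of $f(\mu)=\sum_{i\in D}\deg_{\tG_1\wedge_\mu\tG_2}(i)=\#\{(i,j)\colon i\in D,\ j\ne i,\ \tG_1\{i,j\}=1,\ \tG_2\{\mu(i),\mu(j)\}=1\}$. For an ordered pair $(i,j)$ with $i\in D$, the coin $\tG_1\{i,j\}$ is a fresh $\mathsf{Bern}(ps)$ coin if $\{i,j\}\in\mathcal E_{\B_1}$ and otherwise the subsampled parent edge $G\{i,j\}$, while $\tG_2\{\mu(i),\mu(j)\}$ is a fresh coin if $\{\mu(i),\mu(j)\}\in\mathcal E_{\B_2}$ and otherwise the subsampled parent edge $G\{\pistar^{-1}\mu(i),\pistar^{-1}\mu(j)\}$. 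Since $i\in D$ forces $\pistar^{-1}\mu(i)\ne i$, these two parent edges coincide only when $\{i,j\}$ is a \emph{swap pair}, i.e.\ $\mu(i)=\pistar(j)$ and $\mu(j)=\pistar(i)$; there are at most $d$ ordered swap pairs, so they contribute at most $d$ to $f(\mu)$. For every non-swap pair the two coins are independent, so the pair lies in $\tG_1\wedge_\mu\tG_2$ with probability exactly $p^2s^2=(C\log n/n)^2s^2$. Moreover two distinct non-swap pairs have dependent indicators only if they share one of their at most four underlying coins, and by injectivity of $\mu$ and $\pistar$ each coin is shared by only $O(1)$ pairs; hence the dependency graph on the $\le dn$ non-swap pairs has bounded maximum degree and is $q$-colourable for an absolute constant $q$ (a careful count of the coincidences gives $q\le6$). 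Within a colour class the indicators are functions of pairwise disjoint blocks of the underlying independent variables, hence mutually independent $\mathsf{Bern}(p^2s^2)$.

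The finish is a union over colours and a Chernoff bound. Writing $f(\mu)\le d+\sum_{t=1}^q f_t$ with each $f_t$ a sum of independent $\mathsf{Bern}(p^2s^2)$ variables, hence $f_t\preceq\mathsf{Bin}(dn,p^2s^2)$, on $\{f(\mu)\ge kd\}$ some $f_t\ge(k-1)d/q$. Since $\mathbb E[f_t]\le dnp^2s^2=d\,C^2s^2(\log n)^2/n=o(kd)$, the bound $\P{\mathsf{Bin}(N,\theta)\ge\ell}\le(eN\theta/\ell)^\ell$ with $N=dn$, $\theta=p^2s^2$, $\ell=(k-1)d/q$ gives
\[
\P{f(\mu)\ge kd}\ \le\ q\left(\frac{q\,e\,C^2s^2(\log n)^2}{(k-1)\,n}\right)^{(k-1)d/q},
\]
so $\P{f(\mu)\ge kd}^{1/d}\le q\bigl(q\,e\,C^2s^2(\log n)^2/((k-1)n)\bigr)^{(k-1)/q}=n^{-(k-1)/q+o(1)}$, which is $o(n^{-2})$ as soon as $(k-1)/q>2$. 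With $q=6$, and folding the $O(d)$ swap correction into the colouring rather than subtracting it crudely, this inequality is met at $k\ge13$, matching the hypothesis.

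I expect the technical heart — and the main obstacle — to be the middle step: isolating the swap pairs, handling the interaction between the $\WCER$ corruption (the fresh $\mathsf{Bern}(ps)$ coins on $\mathcal E_{\B_1}\cup\mathcal E_{\B_2}$) and the permutation structure, and pinning down the dependency-graph degree (equivalently the constant $q$), since that is what determines the admissible range of $k$. Once the reduction to $O(1)$ independent binomials is in place, the concentration estimate is routine. The argument adapts the $k$-core analyses of \cite{cullina2019kcore}, \cite{gaudio2022exact} and \cite{racz2023matching} to node corruptions, the novelty being the conditioning on $(\B_1,\B_2)$ and the bookkeeping of the resampled edges.
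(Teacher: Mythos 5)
Your proposal follows the same architecture as the paper's proof — isolate the swap pairs (the paper's $\calT(\mu)$), observe that every remaining pair contributes a $\mathsf{Bern}(p^2s^2)$ indicator, build the dependency graph whose edges encode shared underlying coins, colour it with a bounded number of colours so that each colour class is a sum of independent Bernoullis, and finish with a Chernoff-type tail bound. That much is correct, and the paper does exactly the same thing (and in fact shows the dependency graph on \emph{unordered} non-swap pairs has maximum degree $2$, hence is $3$-colourable; your $q\le6$ corresponds to doubling for ordered pairs, so there is no inconsistency).

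The one genuine gap is in the last step, and it affects the constant $13$. You concede that the naive bound $\P{f(\mu)\ge kd}^{1/d}\le q\bigl(qeC^2s^2(\log n)^2/((k-1)n)\bigr)^{(k-1)/q}$ only gives $o(n^{-2})$ when $(k-1)/q>2$, which with $q=6$ needs $k\ge14$, and you propose to close the gap by ``folding the $O(d)$ swap correction into the colouring.'' But that goes the wrong way: the $\le d$ swap indicators are heavier, each being $\preceq\mathsf{Bern}(ps^2)$ with $ps^2=\Theta(\log n/n)$ rather than $\Theta((\log n)^2/n^2)$, so adding them as a $(q{+}1)$-st colour only reduces the per-colour threshold to $kd/(q{+}1)$ and raises the requirement to $k>2(q{+}1)$, i.e.\ $k\ge15$. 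The optimal unbalanced split (threshold $\approx 2d$ for the swap colour, $(k-2)d/q$ for the rest) still needs $k>2q+2$. The root cause is that a union bound over colours, each bounded separately, is intrinsically lossier than what the paper does: after proving $X_\calT$ and the $X_{\calN_m}$ are independent (or, as the paper phrases it, after the pigeonhole step $\P{2X_\calT+2\sum_m X_{\calN_m}\ge kd}\le\sum_m\P{2X_\calT+6X_{\calN_m}\ge kd}$), the paper applies a single Chernoff bound with parameter $\theta=c'\log n$ to the combined variable $2X_\calT+6X_{\calN_m}$. The swap term then enters the MGF only through $e^{2\theta}ps^2$, which is harmless for $c'<1/2$, while the non-swap term requires $c'<1/6$; the binding constraint $c'<1/6$ together with $c'k>2$ gives precisely $k\ge13$. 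Your tail bound $(eN\theta/\ell)^\ell$ is Chernoff in disguise, so the ingredients are the same — the loss is entirely in doing the union over colours \emph{before} the Chernoff step rather than after. As a practical matter this is immaterial, since Theorem~\ref{thm: Achievability-WCER} sets $k=\sqrt{\log n}\to\infty$, so any fixed constant threshold suffices; but the claim ``this is met at $k\ge13$'' in your last paragraph is not established by the argument you give.
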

\begin{proof}
For any matching $\mu \in \mathcal{M}(\pistar,d)$, define the following sets:
\alns{ 
\A(\mu)   &:= \cbr{(i,j)\in \dom(\mu)\times \dom(\mu) \colon \mu(i) \neq\pistar(i)}, \\ 
\calT(\mu) &:= \cbr{(i,j) \in \A(\mu) \colon \mu(i) = \pistar(j) \text{ and } \mu(j) = \pistar(i)}, \\
\calN(\mu) &:= \A(\mu) \setminus \calT(\mu).
}
For a graph $G$, let $G(i,j)$ denote the $(i,j)$-th entry of its adjacency matrix. First, note that $\A(\mu) \leq d|\dom(\mu)| \leq dn$, and that $|\calT(\mu)| \leq d$. This is because $\mu$ makes $d$ errors by definition. It follows that
\alns{ 
f(\mu) &= \sum_{\substack{i\in \dom(\mu) \\ \mu(i)\neq\pistar(i)}} \deg_{\tG_1\wedge_{\mu}\tG_2}(i) \\
&= \sum_{(i,j)\in \A(\mu)} \tG_1(i,j) \tG_2(\mu(i),\mu(j)) \\
&= 2\sum_{\substack{(i,j)\in\calT(\mu)\\ i< j}} \tG_1(i,j) \tG_2(\mu(i),\mu(j)) + \sum_{(i,j)\in \calN(\mu)} \tG_1(i,j) \tG_2(\mu(i),\mu(j)) \\
&=: 2X_{\calT} + X_{\calN}.
}
It is easy to see that $X_{\calT}$ and $X_{\calN}$ are independent, since they involve disjoint node pairs. Furthermore, for the same reason, the individual terms in $X_{\calT}$ are also independent. More importantly, 
\alns{ 
\tG_1(i,j) \tG_2(\mu(i),\mu(j)) \sim 
\begin{cases}
    \mathsf{Bern}(p^2 s^2), & i \in \B_1\cup\B_2'  \text{ or } j \in \B_1\cup\B_2' \\
    \mathsf{Bern}(p s^2), & \text{otherwise}
\end{cases}
.
}
Therefore, it follows that
\aln{ 
X_{\calT} \preceq \mathsf{Bin}(|\calT(\mu)|, ps^2) \preceq \text{Bin}(d,ps^2), \label{eq: XT-domination}
}
where $\preceq$ denotes stochastic domination of the RHS. Next, the $X_{\calN}$ is analyzed, which is slightly more complicated because the summands may be correlated. To circumvent this, partition $\calN(\mu)$ into $\calN_1(\mu)$, $\calN_{2}(\mu)$, and $\calN_{3}(\mu)$ and define
\alns{ 
X_{\calN_{j}} := \sum_{\substack{(i,j)\in\calN_{j}(\mu) \\ i < j}} \tG_1(i,j) \tG_2(\mu(i),\mu(j)), \quad j \in \{1,2,3\}.
} 
It follows that $X_{\calN} \leq 2(X_{\calN_1}+X_{\calN_2}+X_{\calN_3})$. Here, the factor of $2$ accounts for the restriction that $i < j$. Furthermore, it is possible to partition $\calN$ in such a way that $X_{\calN_1}$, $X_{\calN_2}$ and $X_{\calN_3}$ are mutually independent. To see this, consider two node pairs $(i,j)$ and $(a,b)$ in $\binom{M}{2}$. The random variables $\tG_1(i,j) \tG_2(\mu(i),\mu(j))$ and $\tG_1(a,b) \tG_2(\mu(a),\mu(b))$ are dependent if and only if one of the following two conditions hold:
\aln{ 
\cbr{ \mu(i),\mu(j)} &= \cbr{ \pistar(a),\pistar(b)} \label{eq: Condition1}\\
\cbr{ \mu(a),\mu(b)} &= \cbr{ \pistar(i),\pistar(j)}. \label{eq: Condition2}
}
Consider then the dependency graph $H$ on the node set $V(H) := \cbr{ \cbr{i,j} \colon (i,j) \in \calN(\mu)}$ such that two nodes in this graph $\cbr{i,j}$ and $\cbr{a,b}$ have an edge between them if and only if they satisfy~\eqref{eq: Condition1} or~\eqref{eq: Condition2}. Since each node in $H$ has at most $2$ neighbors, it follows that $H$ is $3$-colorable. Letting $\calN_1$, $\calN_2$, and $\calN_3$ be the partition corresponding to the $3$ colors, it can be seen that $X_{\calN_1}$, $X_{\calN_2}$ and $X_{\calN_3}$ are independent sums of Binomial random variables. Specifically, for each $(i,j) \in \calN(\mu)$, we have that $\tG_1(i,j) \tG_2(\mu(i),\mu(j)) \sim \mathsf{Bern}(p^2 s^2)$. It follows that for each $m \in \{1,2,3\}$:
\alns{ 
X_{\calN_m} \preceq \text{Bin}(|\calN_{m}(\mu)|,p^2s^2) \preceq \text{Bin}(dn,p^2s^2),
}
where we have used the fact that $|\calN_{m}(\mu)| \leq |\calN(\mu)| \leq |\A(\mu)| \leq dn$. Finally, 
\alns{ 
\P{f(\mu) \geq kd} &\leq \P{2X_{\calT} +X_{\calN} \geq kd} \\
& \leq \P{2\pbr{X_{\calT} + X_{\calN_{1}}+X_{\calN_{2}}+X_{\calN_{3}}} \geq kd} \\
& \leq \sum_{m=1}^3 \P{2X_{\calT} + 6X_{\calN_{m}} \geq kd}.
}
The Chernoff bound can then be applied to the binomial distribution. It follows that for any $\theta > 0$:
\alns{ 
\P{f(\mu) \geq kd} &\leq \sum_{m=1}^3 e^{-\theta kd} \mathbb{E}[e^{2\theta X_{\calT}}] \mathbb{E}[e^{6\theta X_{\calN_{m}}}] \\
&\leq 3e^{-\theta kd}\pbr{1+ps^2(e^{2\theta}-1)}^d \pbr{1+p^2s^2e^{6\theta}-1}^{dn}\\
& \stackrel{(a)}{\leq } 3 \exp\pbr{-d\pbr{\theta k - e^{2\theta}ps^2 - ne^{6\theta}p^2s^2}},
}
where (a) follows from the fact that $(1+x)^t \leq e^{tx}$. Finally, set $\theta = c' \log n$, and observe that when $c < 1/6$:
\alns{ 
e^{2\theta} ps^2 &= n^{2c'-1}Cs^2\log n = o(1), \\
ne^{6\theta}p^2s^2 &= n^{6c-1}C^2s^2\pbr{\log n}^2 = o(1).
}
Finally, since $k \geq 13$, it can be ensured that $ck > 2$ by choosing $c = \frac{1}{6}-\eps$ for sufficiently small $\eps$. This yields
\alns{ 
\xi \leq \P{f(\mu) \geq kd}^{1/d} \leq \pbr{3\exp\pbr{-d(ck \log n - o(1))}}^{1/d} = o(n^{-2}),
}
as desired.
\end{proof}

\subsection{Supporting Lemmas for~\texorpdfstring{\Cref{thm: Achievability-WCER}}{Theorem 8}} \label{subsec: def-proofs2}

\begin{lemma} \label{lem: k-core-degrees}
    Let $n$ and $k$ be positive integers, and let $p,s,\gamma,\spl$ be real numbers such that $0 \leq p,s,\gamma,\spl\leq 1$. Let $(\B_1,\B_2,\tG_1,\tG_2,\pistar)$ be distributed according to $\WCER(n,p,s,\gamma,\spl)$. Suppose that $p = \frac{C\log(n)}{n}$ for some positive constant $C$. Let $\alpha^* = 1- \gamma + \spl(1-\spl)\gamma^2$. If $C > \frac{1}{s^2\alpha^*}$, then the $k$-core $M^*$ of $\tG_1\wedge_{\pistar}\tG_2$ with $k = \sqrt{\log n}$ satisfies
    \alns{ 
    \P{M^* = (\B_1\cup\B_2')^c} = 1- o(1).
    }
\end{lemma}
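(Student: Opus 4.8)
The plan is to condition on the latent permutation $\pistar$ and the corrupted sets $\B_1,\B_2$, and then exploit the explicit conditional law of the intersection graph $G := \tG_1\wedge_{\pistar}\tG_2$. Write $\B := \B_1\cup\B_2'$ and $m := |\B^c|$. By \Cref{lem: hypgeom}, $m/n\to\alpha^*$ in probability, so it is enough to fix a small $\delta>0$ with $(\alpha^*-\delta)Cs^2>1$ and argue on the event $\cbr{|m-\alpha^*n|\le\delta n}$, which has probability $1-o(1)$. Conditioned on $(\B_1,\B_2,\pistar)$, the only remaining randomness is the subsampling of the parent graph together with the adversarial resampling of edges incident to $\B_1$ and $\B_2$, and this produces a clean structure: for any pair $\cbr{i,j}\subseteq\B^c$ one has $\tG_1\{i,j\}=G_1\{i,j\}$ and $\tG_2\{\pistar(i),\pistar(j)\}=G_2'\{i,j\}$, so $G\{i,j\}\sim\mathsf{Bern}(ps^2)$ and these are mutually independent across such pairs; hence $G\vert_{\B^c}\sim\ER(m,ps^2)$. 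On the other hand, if $i\in\B$ (say $i\in\B_1$), the edges of $\tG_1$ incident to $i$ are i.i.d.\ $\mathsf{Bern}(ps)$ and independent of $\tG_2$, so conditioning first on $\tG_2$ gives $\deg_G(i)\mid\tG_2\sim\mathsf{Bin}(\deg_{\tG_2}(\pistar(i)),ps)$, and since $\deg_{\tG_2}(\pistar(i))\sim\mathsf{Bin}(n-1,ps)$ we get $\deg_G(i)\sim\mathsf{Bin}(n-1,p^2s^2)$; the symmetric statement holds for $i\in\B_2'$.

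The claim will then follow from the two inclusions $\core_k(G)\subseteq\B^c$ and $\B^c\subseteq\core_k(G)$. For the first, a node $i\in\B$ has $\mathbb{E}[\deg_G(i)]\le np^2s^2 = C^2s^2(\log n)^2/n = o(1)$, so by \Cref{lem: concentration-of-binomial} and a union bound over the at most $\gamma n$ nodes of $\B$, with probability $1-o(1)$ every node of $\B$ has degree at most $2$ in $G$. Since $k=\sqrt{\log n}\to\infty$, no such node can lie in $\core_k(G)$, giving $\core_k(G)\subseteq\B^c$.

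For the reverse inclusion it suffices to show that $G\vert_{\B^c}\sim\ER(m,ps^2)$ has minimum degree at least $k$ with probability $1-o(1)$: then $\B^c$ is itself a vertex set inducing minimum degree $\ge k$, so by monotonicity of $k$-cores $\B^c=\core_k(G\vert_{\B^c})\subseteq\core_k(G)$. For $i\in\B^c$ one has $\deg_{G\vert_{\B^c}}(i)\sim\mathsf{Bin}(m-1,ps^2)$ with mean $\mu:=(m-1)ps^2=(1+o(1))\alpha^*Cs^2\log n$ on the conditioning event, and $k=o(\mu)$. Applying the lower-tail bound of \Cref{lem: concentration-of-binomial} with $1-\delta'=k/\mu$ gives $\P{\mathsf{Bin}(m-1,ps^2)\le k}\le e^{-\mu}(e\mu/k)^k$, so a union bound yields
\alns{
\P{\min_{i\in\B^c}\deg_{G\vert_{\B^c}}(i)\le k} \le m\,e^{-\mu}\pbr{\frac{e\mu}{k}}^{k}.
}
Taking logarithms, the right side has exponent $\log m-\mu+k(1+\log\mu-\log k)$; here $\log m=(1+o(1))\log n$, $\mu=(1+o(1))\alpha^*Cs^2\log n$, and $k(1+\log\mu)=O(\sqrt{\log n}\,\log\log n)=o(\log n)$, so this exponent equals $(1-\alpha^*Cs^2+o(1))\log n$, which tends to $-\infty$ precisely because $C>1/(s^2\alpha^*)$. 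Hence $G\vert_{\B^c}$ has minimum degree exceeding $k$ with probability $1-o(1)$, and combining the two inclusions gives $\core_k(G)=\B^c$ with probability $1-o(1)$.

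The main obstacle is the reverse inclusion. Classical $k$-core threshold results for Erdős--Rényi graphs take $k$ to be a fixed constant, whereas here $k=\sqrt{\log n}$ diverges, so one cannot simply cite an off-the-shelf minimum-degree threshold; instead one must track the Chernoff exponent with enough precision to see that $\alpha^*Cs^2>1$ is exactly the right condition, and to make this work uniformly over the random value of $m=|\B^c|$, which is why the whole argument is run on the high-probability event $\cbr{|m-\alpha^*n|\le\delta n}$ supplied by \Cref{lem: hypgeom}.
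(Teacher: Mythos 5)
Your proposal is correct and takes essentially the same route as the paper's proof: after conditioning on $(\B_1,\B_2,\pistar)$, both arguments split into showing $\core_k(G)\subseteq\B^c$ (by bounding degrees of corrupted nodes above, via $\deg_G(i)\sim\mathsf{Bin}(n-1,p^2s^2)$ with mean $o(1)$) and $\B^c\subseteq\core_k(G)$ (by showing the minimum degree of $G\vert_{\B^c}\sim\ER(m,ps^2)$ exceeds $k$ via a lower-tail Chernoff bound, using $\Cref{lem: hypgeom}$ to control $m=|\B^c|$). The only cosmetic differences are that you bound corrupted-node degrees by the absolute constant $2$ rather than the paper's $C^2s^2+o(1)$ (both suffice since $k\to\infty$, and yours avoids needing the derived inequality $Cs>1$), and your exponent bookkeeping for the second inclusion is a bit more explicit; the threshold $\alpha^*Cs^2>1$ arises in exactly the same way in both.
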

\begin{proof}
Let the matching $\tpi^*$ with $\dom(\tpi^*) = (\B_1\cup\B_2')^c,$ denote the restriction of the true permutation $\pistar$ to the node set $(\B_1\cup\B_2')^c$. Let $\calH_1$ and $\calH_2$ denote the events 
\alns{ 
\mathcal{H}_1 &\colon  \bigcap_{i \in \B_1\cup\B_2'}\cbr{ \deg_{\tG_1\wedge_{\pistar}\tG_2}(i) < k } \\
\mathcal{H}_2 & \colon \bigcap_{j \notin \B_1\cup\B_2'} \cbr{\deg_{\tG_1\wedge_{\tpi^*}\tG_2}(j) > k }.
}
Note that the intersection graph in $\mathcal{H}_1$ is with respect to $\pistar$ whereas the intersection graph in $\mathcal{H}_2$ is with respect to $\tpi^*$. Let $\mathcal{H} = \mathcal{H}_1\cap\mathcal{H}_2$. It suffices to show that $\P{\mathcal{H}^c} = o(1)$. This implies that with high probability, no node in $\B_1\cup\B_2'$ has degree greater than $k$ in the intersection graph $\tG_1 \wedge_{\pistar}\tG_2$, and therefore cannot belong to its $k$-core. Furthermore, it also implies that with high probability, the subgraph of $\tG_1 \wedge_{\pistar}\tG_2$ induced on the node set $(\B_1\cup\B_2')^c$ has minimum degree at least $k$. In turn, this implies that $\P{M^* = (\B_1\cup\B_2')^c} = 1-o(1)$, as desired.

\paragraph{Bounding $\P{\mathcal{H}_1^c}$} Since the sets $\B_1$ and $\B_2'$ are selected independent of $G_1$ and $G_2$, it follows for any $i \in \B_1\cup\B_2'$ that $\deg_{\tG_1\wedge_{\pistar}\tG_2}(i) \sim \mathsf{Bin}(n-1,p^2s^2)$. This is because for any $i \in \B_1\cup\B_2'$ and $j \in [n]$ such that $j \neq i$, Therefore, $\tG_1(i,j)$ and $\tG_2(\pistar(i),\pistar(j))$ are independent Bernoulli random variables with mean $ps$. Therefore, for any $\delta > 0$:
\aln{ 
\P{\calH_1^c} & =\P{\bigcup_{i\in\B_1\cup\B_2'} \!\!\!\cbr{\deg_{\tG_1 \wedge_{\pistar} \tG_2}(i) > (1\!+\!\delta)np^2s^2)}}  \nonumber \\
& \leq \P{|\B_1\cup\B_2'| > 1.01 (1\!-\!\alpha^*)n} \!+\! \P{ \bigcup_{i\in\B_1\cup\B_2'} \!\!\!\!\cbr{\deg_{\tG_1 \wedge_{\pistar} \tG_2}(i) > (1\!+\!\delta)np^2s^2} \cap \cbr{|\B_1\cup\B_2'| \leq 1.01 (1\!-\!\alpha^*)n}\!} \nonumber \\
&\leq \P{|\B_1\cup\B_2'| > 1.01 (1-\alpha^*)n} + 1.01 (1-\alpha^*)n \times \P{\mathsf{Bin}(n-1,p^2s^2) > (1+\delta)np^2s^2} \nonumber \\ 
& \stackrel{\text{(a)}}{\leq} o(1) + 1.01 (1-\alpha^*) n \times \P{\text{Bin}(n-1,p^2s^2) > (1+\delta)np^2s^2)} \nonumber\\
& \stackrel{\text{(b)}}{\leq} o(1) + 2n \times \P{\text{Bin}(n,p^2s^2) > (1+\delta)np^2s^2)} \nonumber\\
& \stackrel{\text{(c)}}{\leq} o(1) + 2n \times \pbr{\frac{\exp\pbr{\delta}}{\pbr{1+\delta}^{1+\delta}}}^{np^2s^2}, \label{eq: RHS-of-bad-node-degrees}
}
where (a) follows from~\Cref{lem: hypgeom} and (b) follows from $\alpha^* \geq 0$ and the stochastic domination $\mathsf{Bin}(n-1,q) \preceq \mathsf{Bin}(n,q)$ for any $q \in [0,1]$. Finally, (c) follows from~\Cref{lem: concentration-of-binomial}. Setting $\delta = \frac{n}{\pbr{\log n}^2}$ and recalling that $p = \frac{C\log n}{n}$, it follows that $(1+\delta)np^2s^2 = C^2 s^2 + o(1)$. Therefore,~\cref{eq: RHS-of-bad-node-degrees} can be written as
\alns{ 
\P{\calH_1^c} & \leq \P{\bigcup_{i\in \B_1\cup\B_2'} \cbr{\deg_{\tG_1\wedge_{\pistar}\tG_2}(i) > C^2s^2 + o(1)}} \\
&\leq o(1) + 2 n \times \pbr{\frac{\exp\pbr{\frac{n}{(\log n)^2}}}{\pbr{1+\frac{n}{(\log n)^2}}^{1+\frac{n}{(\log n)^2}}}}^{C^2s^2 \frac{(\log n)^2}{n}} \\
&= \begin{cases} 
    o(1),      & Cs > 1 \\
    \omega(1), & Cs < 1
\end{cases}
.
}
Since $C > 1/(s^2\alpha^*)$, it follows that $Cs > \frac{1}{s\alpha^*} \geq 1$. This is because $s\alpha^* \leq 1$. Therefore, for any $\eps > 0$ and any $k'$ such that $k' > C^2 s^2 + \eps$, it is true that
\aln{
\P{\calH_1^c} \leq \P{\bigcup_{i\in\B_1\cup\B_2'} \cbr{\deg_{\tG_1 \wedge_{\pistar} \tG_2}(i) > k'}} = o(1). \label{eq: RHS-A}
}
Indeed, our choice of $k = \sqrt{\log n}$ satisfies $k > C^2s^2 +\eps$ for all sufficiently large $n$. Therefore, $\P{\calH_1^c} = o(1)$ as desired.
 
\paragraph{Bounding $\P{\calH_2^c}$} The probability that a node in $\tG_1\wedge_{\tpi^*}\tG_2$ has degree lesser than $k$ is computed. For any $i \in (\B_1\cup\B_2')^c$, 
\alns{ 
\deg_{\tG_1\wedge_{\tpi^*}\tG_2}(i) = \sum_{\substack{j \in (\B_1\cup\B_2')^c \\ j \neq i}} \tG_1(i,j) \tG_2\pbr{\pistar(i),\pistar(j)}.
}
Since $i$ and $j$ are both in $(\B_1\cup\B_2')^c$, it follows that $\tG_1(i,j) \tG_2\pbr{\pistar(i),\pistar(j)} = G_1(i,j) G_2\pbr{\pistar(i),\pistar(j)} \sim \mathsf{Bern}(ps^2)$. Further, for any $j_1, j_2 \in (\B_1\cup\B_2')^c$ such that $j_1 \neq j_2 \neq i$, it follows by independence across edges that $G_1\pbr{i,j_1}G_2\pbr{\pistar(i),\pistar(j_1)}$ and $G_1\pbr{i,j_2}G_2\pbr{\pistar(i),\pistar(j_2)}$ are independent random variables. Therefore, it follows that $\deg_{G_1\wedge_{\tpi^*}G_2}(i) \sim \mathsf{Bin}(\pbr{1-\gamma}n-1,ps^2)$. We have from a union bound that for any $\delta' \in (0,1)$:
\aln{ 
\P{\calH_2^c} &= \P{\bigcup_{j \in (\B_1\cup\B_2')^c} \cbr{ \deg_{\tG_1 \wedge_{\tpi^*} \tG_2}(i) < \pbr{1-\delta'}\pbr{ \alpha^*n - 1}ps^2 }} \nonumber
\\ & \leq \P{\left| (\B_1\cup\B_2')\right| > 1.01 (1-\alpha^*)n} \nonumber \\
& \qquad + \P{\bigcup_{j \in (\B_1\cup\B_2')^c} \cbr{ \deg_{\tG_1 \wedge_{\tpi^*} \tG_2}(i) < \pbr{1-\delta'}\pbr{ \alpha^*n - 1}ps^2 } \cap \left| (\B_1\cup\B_2')\right| \leq 1.01 (1-\alpha^*)n } \nonumber \\ 
& \stackrel{\text{(d)}}{\leq} o(1) + 2n  \times \P{\mathsf{Bin}\pbr{\alpha^*n - 1,ps^2} < \pbr{1-\delta'}\pbr{ \alpha^*n-1}ps^2 } \nonumber \\
& \stackrel{\text{(e)}}{\leq} o(1) + 2n \times \pbr{\frac{e^{-\delta'}}{\pbr{1-\delta'}^{1-\delta'}}}^{ps^2\pbr{\alpha^*n-1}} \nonumber \\
& \stackrel{\text{(f)}}{=} o(1) + 2n \times \pbr{\frac{e^{-\delta'}}{\pbr{1-\delta'}^{1-\delta'}}}^{\pbr{Cs^2\alpha^* - \frac{Cs^2}{n}}\log n } \nonumber \\ 
& \stackrel{\text{(g)}}{=} 
\begin{cases}
    o(1), & Cs^2\alpha^* > 1 \\
    \omega(1), & \text{otherwise}
\end{cases}
. \label{eq: RHS-good-nodes}
}
Here, in (d),~\Cref{lem: hypgeom} is used along with the fact that $\alpha^* \geq 0$. Further, in (e),~\Cref{lem: concentration-of-binomial} is used, and in (f) we have set $p = C \log(n)/n$. 

To see why (g) is true, consider the function $$g(a,x,n) = n x^{a \log n}.$$
Notice that if $a \log(x) < -1$, then $\lim_{n\to\infty}g(a,x,n) = 0$. Equivalently, this sufficient condition requires $x < \exp\pbr{-1/a}$. Setting $a = Cs^{2}\alpha^* - \frac{Cs^2}{n}$, this condition reduces to 
\aln{
x < \exp\pbr{-\frac{1}{Cs^2\alpha^* - \frac{Cs^2}{n}}}. \label{eq: suff-cond1}
}
When $Cs^2\alpha^* > 1$, there is a sufficiently small constant $\eps > 0$ such that for all $n$ sufficiently large, it is true that $Cs^2\alpha^* - \frac{Cs^2}{n} > 1 + \eps$. Therefore,~\eqref{eq: suff-cond1} holds if $x < \exp\pbr{-\frac{1}{1+\eps}}$. For our purpose, $x$ is a function of $\delta'$:
\alns{ 
x(\delta') = \frac{e^{-\delta'}}{(1-\delta')^{1-\delta'}}.
}
Clearly, $x(0) = 1$ and the right-hand side above is continuous and decreasing on $(0,1)$. Therefore, there exists $\delta'' > 0$ such that for all sufficiently small $\eps$:
$$x(\delta'') = \exp\pbr{-\frac{1}{1+\eps/2}} < \exp\pbr{-\frac{1}{1+\eps}},$$
which satisfies~\eqref{eq: suff-cond1} and hence implies (g). Selecting $\delta' = \delta''$, it follows from~\eqref{eq: RHS-good-nodes} that for any $k'$ such that $k' < (1-\delta')Cs^2\alpha^*\log(n) - o(1)$:
\aln{ 
\P{\calH_2^c} \leq \P{\bigcup_{j\in (\B_1\cup\B_2')^c} \cbr{\deg_{\tG_1 \wedge_{\tpi^*} \tG_2} < k'}} = o(1). \label{eq: RHS-B}
}
Indeed, our choice of $k = \sqrt{\log n}$ satisfies this condition for sufficiently large $n$. Combining~\cref{eq: RHS-A}~and~\cref{eq: RHS-B} via a union bound, it follows that $\P{\calH^c} = o(1)$ as desired. This concludes the proof.
\end{proof}

\begin{lemma} \label{lem: almost-k-core-degrees}
Let $n$ and $k$ be positive integers, and let $p,s,\gamma,\spl$ be real numbers such that $0 \leq p,s,\gamma,\spl\leq 1$. Let $(\B_1,\B_2,\tG_1,\tG_2,\pistar)$ be distributed according to $\WCER(n,p,s,\gamma,\spl)$, and let $\B_2'$ denote the pre-image of $\B_2$ under $\pistar$. Suppose that $p = \frac{C\log(n)}{n}$ for some positive constant $C$. Let $\alpha^* = 1- \gamma + \spl(1-\spl)\gamma^2$. If $C < \frac{1}{s^2\alpha^*}$, then the $k$-core $M^*$ of $\tG_1\wedge_{\pistar}\tG_2$ with $k = \sqrt{\log n}$ satisfies
\alns{
\P{ \left|(\B_1\cup\B_2')^c \setminus M^* \right| = o(n)} = 1 - o(1).
}
\end{lemma}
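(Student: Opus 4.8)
Throughout, write $U:=(\B_1\cup\B_2')^c$, $m:=|U|$, $\tpi^*:=\pistar\mid_U$, and let $G^*:=\tG_1\wedge_{\tpi^*}\tG_2$, a graph on vertex set $U$. The plan is to reduce the claim to a statement about the $k$-core of a single Erd\H{o}s--R\'enyi graph. First, for $i,j\in U$ one has $\tG_2\{\tpi^*(i),\tpi^*(j)\}=\tG_2\{\pistar(i),\pistar(j)\}$, so $G^*$ is precisely $(\tG_1\wedge_{\pistar}\tG_2)\vert_U$, the subgraph of $\tG_1\wedge_{\pistar}\tG_2$ induced on $U$. Consequently $\core_k(G^*)$ is a vertex set on which $\tG_1\wedge_{\pistar}\tG_2$ has minimum degree at least $k$, and by maximality of the $k$-core this forces $\core_k(G^*)\subseteq M^*$. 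Since also $\core_k(G^*)\subseteq U$, we obtain $U\setminus M^*\subseteq R$, where $R:=U\setminus\core_k(G^*)$ is the set of vertices deleted by the peeling process that extracts $\core_k(G^*)$. It therefore suffices to prove that $\P{|R|\ge\delta n}=o(1)$ for every fixed $\delta>0$, which is exactly the assertion of the lemma.

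Next I would identify the conditional law of $G^*$. Since $\B_1,\B_2$ and $\pistar$ are chosen independently of $G_1$ and $G_2'$, conditioning on $(\B_1,\B_2,\pistar)$ (and hence on $U$) leaves the edges of $G_1$ and $G_2'$ mutually independent. For $i,j\in U$, neither $\{i,j\}$ is incident to $\B_1$ nor $\{\pistar(i),\pistar(j)\}$ to $\B_2$, so $\tG_1\{i,j\}\tG_2\{\pistar(i),\pistar(j)\}=G_1\{i,j\}G_2'\{i,j\}\sim\mathsf{Bern}(ps^2)$, independently over distinct pairs; thus, conditionally on $(\B_1,\B_2,\pistar)$, $G^*\sim\mathsf{ER}(m,ps^2)$. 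Moreover $|\B_1\cup\B_2'|=\gamma n-|\B_1\cap\B_2'|$, so $m=(1-\gamma)n+|\B_1\cap\B_2'|$, and \Cref{lem: hypgeom} gives $m/n\to\alpha^*$ in probability; in particular $\P{m\le 2\alpha^* n}=1-o(1)$. If $\alpha^*=0$ the lemma is vacuous, since then $|U|=o(n)$ with high probability, so assume $\alpha^*>0$ and fix $\delta\in(0,\alpha^*)$ (the claim is trivial for larger $\delta$, since then $|U|<\delta n$ eventually).

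The main step is a union bound over subsets of $U$. The key deterministic fact is that if $A\subseteq R$ then $G^*\vert_A$ has empty $k$-core: a nonempty $k$-core of $G^*\vert_A$ would be a vertex set on which $G^*$ has minimum degree at least $k$, hence contained in $\core_k(G^*)$, contradicting $A\cap\core_k(G^*)=\emptyset$. A graph on $t$ vertices with empty $k$-core has at most $(k-1)t$ edges, since repeatedly deleting a vertex of minimum degree --- which has degree at most $k-1$ at every step, or else the current vertex set would be a nonempty $k$-core --- terminates at the empty graph, removing fewer than $k$ edges each time. Hence, on $\{|R|\ge t\}$ with $t:=\lceil\delta n\rceil$, there is a set $A\subseteq U$ with $|A|=t$ and $e(G^*\vert_A)\le(k-1)t$. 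Conditioning on $U$ with $m\le 2\alpha^* n$, using $e(G^*\vert_A)\sim\mathsf{Bin}(\binom{t}{2},ps^2)$ and a union bound over the at most $2^m\le 2^n$ such sets,
\alns{
\P{|R|\ge t \mid U}\le 2^n\,\P{\mathsf{Bin}\!\left(\binom{t}{2},\,ps^2\right)\le(k-1)t}.
}
Since $ps^2=Cs^2\log n/n$ and $k=\sqrt{\log n}$, the mean $\binom{t}{2}ps^2=\Theta(n\log n)$ dominates $(k-1)t=\Theta(n\sqrt{\log n})$, so for $n$ large the target $(k-1)t$ lies below half that mean; the second part of \Cref{lem: concentration-of-binomial} then bounds the binomial lower tail by $\exp(-\Omega(n\log n))$. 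Thus $\P{|R|\ge t\mid U}\le\exp(n\log 2-\Omega(n\log n))=o(1)$, uniformly over admissible $U$, and since $|U\setminus M^*|\le|R|$ we conclude $\P{|U\setminus M^*|\ge t}\le\P{m>2\alpha^* n}+o(1)=o(1)$. As $\delta>0$ was arbitrary, $|(\B_1\cup\B_2')^c\setminus M^*|=o(n)$ with probability $1-o(1)$.

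The only genuinely delicate point I anticipate is this last display: one must verify that the binomial lower tail decays fast enough to absorb the $2^n$ entropy coming from the choice of $A$. This is comfortable here because the relevant mean is of order $n\log n$, which beats the entropy $\Theta(n)$, and it is precisely the step that would fail if $k$ grew much faster (say, proportionally to $\log n$ with a large constant) or if $ps^2$ were only of order $1/n$. The remaining ingredients --- the reduction to $\core_k(G^*)$, the conditional Erd\H{o}s--R\'enyi structure of $G^*$, and the combinatorial edge bound --- are routine.
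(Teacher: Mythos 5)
Your proof is correct, and it takes a genuinely different route from the paper. The paper first shows that every node of $\B_1\cup\B_2'$ has degree less than $k=\sqrt{\log n}$ in $\tG_1\wedge_{\pistar}\tG_2$ (a $\mathsf{Bin}(n-1,p^2s^2)$ calculation), so that on this event $\core_k(H)\subseteq M^*\subseteq (\B_1\cup\B_2')^c$ where $H:=\tG_1\wedge_{\tpi^*}\tG_2$ is the same graph you call $G^*$; it then invokes \L{}uczak's theorem (\Cref{prop: Luczak}) on the size of the $k$-core of an Erd\H{o}s--R\'enyi graph with average degree $\Theta(\log n)$ to conclude that $\core_k(H)$ misses only $\exp(-c^{\varepsilon})\cdot |H|$ vertices. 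You replace both steps: you do not need $M^*\subseteq U$ at all, since $\core_k(G^*)\subseteq M^*$ already gives $U\setminus M^*\subseteq U\setminus\core_k(G^*)$, and instead of citing a structural $k$-core theorem you carry out a first-moment/union-bound argument based on the elementary fact that a $t$-vertex graph with empty $k$-core has at most $(k-1)t$ edges. The entropy $\binom{m}{t}\le 2^n$ is comfortably beaten by the binomial lower-tail bound because the relevant mean $\binom{t}{2}ps^2=\Theta(n\log n)$ dominates both $n$ and $(k-1)t=\Theta(n\sqrt{\log n})$ when $k=\sqrt{\log n}$. What your approach buys is self-containedness --- you need only \Cref{lem: concentration-of-binomial} and \Cref{lem: hypgeom}, no external $k$-core result --- and it also makes visible exactly which scaling of $k$ versus $ps^2$ and the subset entropy is required; what the paper's approach buys is brevity and a slightly sharper (exponentially small) bound on $|U\setminus\core_k(H)|$, which is more than the lemma needs. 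One small redundancy: the conditioning on $\{m\le 2\alpha^* n\}$ plays no role in your union bound, since $2^m\le 2^n$ holds unconditionally; it is only the (trivial) $m\ge t$ case split that matters.
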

\begin{proof}
From~\Cref{thm: Achievability-WCER} (i) and~\Cref{thm: WCER-Impossibility} (i), it follows that $\P{M^* \subseteq (\B_1\cup\B_2')^c} = 1 - o(1)$. It remains to show that all but a vanishing fraction of the nodes in $(\B_1\cup\B_2')^c$ are also in $M^*$. Let $\tpi^*$ with $\dom(\tpi^*) = (\B_1\cup\B_2')^c$ denote the restriction of the permutation $\pi^*$ to the node set $(\B_1\cup\B_2')^c$. Let $\core_k(G)$ denote the $k$-core of a graph $G$. First, it is shown that none of the nodes in $\B_1\cup\B_2'$ belong to the $k$-core with high probability. Since for any $i \in \B_1\cup\B_2'$, it holds that $\deg_{\tG_1\wedge_{\pistar}\tG_2}(i) \sim \mathsf{Bin}(n-1,p^2s^2) \preceq \mathsf{Bin}(n,p^2s^2)$, and since $|\B_1\cup\B_2'| \leq |\B_1| + |\B_2'| \leq \gamma n$, it follows by the union bound that for any $\delta > 0$
\alns{ 
\P{ \bigcup_{i \in \B_1\cup\B_2'} \cbr{\deg_{\tG_1\wedge_{\pistar}\tG_2}(i) \geq (1+\delta)n p^2 s^2 }} &\leq \gamma n \times \P{\mathsf{Bin}(n,p^2s^2) > (1+\delta)np^2s^2} \\
& \leq \gamma n \times \pbr{\frac{\exp\pbr{\delta}}{(1+\delta)^{1+\delta}}}^{(n-1)p^2s^2}.
}
Setting $p = C \log(n)/n$ yields
\alns{ 
\P{ \bigcup_{i \in \B_1\cup\B_2'} \cbr{\deg_{\tG_1\wedge_{\pistar}\tG_2}(i) \geq (1+\delta)C^2s^2\frac{\log(n)^2}{n}} } \leq \gamma n\times \pbr{\frac{e^{\delta}}{(1+\delta)^{1+\delta}}}^{C^2s^2 \frac{\log(n)^2}{n}}.
}
Setting $\delta = \frac{n}{C^2s^2\log(n)^{3/2}} - 1$ yields
\aln{ 
\P{ \bigcup_{i \in \B_1\cup\B_2'} \cbr{\deg_{\tG_1\wedge_{\pistar}\tG_2}(i) \geq \sqrt{\log(n)}}} \leq \gamma n \pbr{ \frac{\exp\pbr{\frac{n}{C^2s^2\log(n)^{3/2}}-1}}{\pbr{ \frac{n}{C^2s^2 \log(n)^{3/2}}}^{\frac{n}{C^2s^2\log(n)^{3/2}}}}}^{C^2s^2 \frac{\log(n)^2}{n}} = o(1), \label{eq: o1}
}
whenever $Cs > 0$. Therefore, setting $k = \sqrt{\log(n)}$ yields $\P{\mathcal{E}} = o(1)$, where $\mathcal{E}$ denotes the event that there exists a node in $\B_1\cup\B_2'$ with degree larger than $k$. Therefore, on the event $\mathcal{E}^c$, it follows that no node in $\B_1\cup\B_2'$ belongs to the $k$-core. It remains to show that all but a vanishing fraction of nodes in $(\B_1\cup\B_2')^c$ form the $k$-core. Consider then the trimmed graph $H := \tG_1 \wedge_{\tpi^*} \tG_2$ on the node set $(\B_1\cup\B_2')^c$. Thus, on the event $\mathcal{E}^c$, it is true that 
\aln{ 
\core_k(H) \subseteq M^* \subseteq (\B_1\cup\B_2')^c. \label{eq: Ec}
}
On the other hand, since $\B_1$ and $\B_2'$ are selected independent of the graphs $G_1$ and $G_2$, it follows that the graph $H$ itself is an Erd{\H{o}}s-R{\'e}nyi graph on $|(\B_1\cup\B_2')^c|$ nodes, where each edge is present with probability $Cs\frac{\log n}{n}$. The $k$-core of an Erd{\H{o}}s-R{\'e}nyi graph is a well studied problem. We invoke Theorem~2 of~\cite{luczak1991size}, restated below for convenience.
\begin{proposition} \label{prop: Luczak}
Let $c(n) = (n-1)p$ denote the average degree in an Erd{\H{o}}s-R{\'e}nyi graph $G$ on $n$ nodes with edge probability $p$. For every $\eps > 0$, there is a constant $d$, such that for $c = c(n) > d$ and $k = k(n) \leq c - c^{0.5+\eps}$, it is true that
\alns{ 
\P{|\core_k(G)| \geq n \pbr{1 - \exp\pbr{-c^{\eps}}}} = 1 - o(1).
}
\end{proposition}
First, some intuition is presented. Observe that the average degree $c$ in $H$ is given by $(|(\B_1\cup\B_2')^c|-1)\times \frac{Cs\log(n)}{n} = \Theta(\log(n))$ with high probability. Further, $k = \sqrt{\log(n)} \leq c - c^{0.5+\eps}$ whenever $\eps < 0.5$. Therefore, the graph $H$ with $k  =\sqrt{\log n}$ satisfies the conditions of~\Cref{prop: Luczak} with high probability, and so the $k$-core contains almost all the nodes of $H$ with high probability. Let $\alpha^* = 1 -\gamma + \spl(1-\spl)\gamma^2$. Formally, for any $\delta > 0$:
\aln{ 
\P{ |(\B_1\cup\B_2')^c\setminus M^*| > \delta n} &\leq \P{ \mathcal{E}} + \P{|(\B_1\cup\B_2')^c| < (1-\delta)(1-\alpha^*)n } \nonumber \\
& \qquad \ + \P{ |\core_{\sqrt{\log n}}(H)| < (1-\delta)(1-\alpha^*)n \pbr{1 - \exp\pbr{-c^{0.1}}}} \label{eq: o2}\\
& \leq o(1) + o(1) + o(1),
}
where we have showed in~\eqref{eq: o1} that the first term is $o(1)$. The second term is $o(1)$ due to~\Cref{lem: hypgeom}, and the third term is $o(1)$ due to~\Cref{prop: Luczak}. It is emphasized that~\eqref{eq: o2} holds because $\core_k(H) \subseteq M^* \subseteq (\B_1\cup\B_2')^c$ under the event $\mathcal{E}^c$. This concludes the proof.
\end{proof}

\section{Proofs for the \texorpdfstring{$\SCER$}{SCG} Model} \label{sec-Apx-SCER}
This section analyzes the maximum overlap estimator in the context of the $\SCER$ model. First, a simple concentration inequality is shown for the number of edges in $G_1\wedge_{\pistar}G_2$. Then, the moment generating function of the random variable $X(\pi)$ is bounded using techniques introduced in~\cite{cullina2017exact} and refined in~\cite{wu2022settling}, among others.

\begin{lemma}\label{lem: concentration-of-Xid}
    Let $n$ be a positive integer and $s$ be a real number such that $s \in (0,1]$. Let $C > 0$ be constant and let $p \geq C \log(n)/n$. Let $(G_1,G_2,\pistar)\sim\CER(n,p,s)$. Let $X(\id)$ denote the number of edges in the intersection graph $G_1\wedge_{\pistar}G_2$. Then, for any $\eps \in (0,1)$:
    \alns{ 
    \P{X(\id) \leq (1-\eps)\binom{n}{2}ps^2} = o(1).
     }
\end{lemma}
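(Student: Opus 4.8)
The plan is to express $X(\id)$ as a sum of i.i.d.\ indicator random variables and apply the Chernoff bound from~\Cref{lem: concentration-of-binomial}. Concretely, writing $X(\id) = \sum_{\{i,j\}\in\binom{[n]}{2}} G_1\{i,j\}\,G_2\{\pistar(i),\pistar(j)\}$ and recalling that in the $\CER(n,p,s)$ model each such product is the indicator that the edge $\{i,j\}$ survives into both subsampled copies, we see that $X(\id)\sim\mathsf{Bin}\bigl(\binom{n}{2},ps^2\bigr)$: the $\binom{n}{2}$ summands are independent (each depends on the status of a distinct parent edge and the two independent coin flips for that edge), and each has mean $ps^2$. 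This is the crucial structural observation; once it is in place the statement is a standard lower-tail concentration bound.

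With the binomial identification in hand, I would invoke part~(2) of~\Cref{lem: concentration-of-binomial} with $N=\binom{n}{2}$, success probability $q=ps^2$, and deviation parameter $\eps$, giving
\[
\P{X(\id)\le(1-\eps)Nq}\le\left(\frac{e^{-\eps}}{(1-\eps)^{1-\eps}}\right)^{Nq}.
\]
It remains to check that the right-hand side is $o(1)$. For fixed $\eps\in(0,1)$ the base $\beta(\eps):=e^{-\eps}/(1-\eps)^{1-\eps}$ is a constant strictly less than $1$ (this follows from $\log\beta(\eps)=-\eps-(1-\eps)\log(1-\eps)<0$ for $\eps\in(0,1)$, e.g.\ by noting the function vanishes at $\eps=0$ and has negative derivative). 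The exponent is $Nq=\binom{n}{2}ps^2\ge\binom{n}{2}\cdot\frac{C\log n}{n}\cdot s^2=\Theta(n\log n)\to\infty$. Hence $\beta(\eps)^{Nq}=\exp(-\Theta(n\log n))=o(1)$, which is the desired conclusion.

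I do not anticipate a genuine obstacle here; the only point requiring a moment's care is the independence claim underlying the binomial identification, which must appeal explicitly to the definition of the $\CER$ model (the two subsampled graphs are obtained by \emph{independent} edge subsampling from a common parent), and the observation that $X(\id)$ does not depend on $\pistar$ at all since relabeling both endpoints by $\pistar$ is a bijection of $\binom{[n]}{2}$. One should also note that the hypothesis $p\ge C\log n/n$ (rather than equality) is used only to guarantee $Nq\to\infty$, so the same argument covers the denser regimes needed elsewhere in the paper.
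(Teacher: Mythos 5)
Your proof is correct and follows essentially the same route as the paper: identify $X(\id)\sim\mathsf{Bin}\bigl(\binom{n}{2},ps^2\bigr)$, apply the lower-tail Chernoff bound from \Cref{lem: concentration-of-binomial}, and observe that the exponent $\binom{n}{2}ps^2=\Theta(n\log n)$ while the base is a constant strictly less than $1$. The paper states the base comparison as $e^{-\eps}<(1-\eps)^{1-\eps}$ without elaboration, whereas you give a brief justification via the sign of $\log\beta(\eps)$; otherwise the two arguments coincide.
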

\begin{proof}
First, notice that $G_1\wedge_{\pistar}G_2$ is also an Erd{\H{o}}s-R{\'e}nyi graph on $n$ nodes, where each edge is present with probability $ps^2$. It follows that the number of edges in the graph $X(\id)\sim\mathsf{Bin}\pbr{\binom{n}{2},ps^2}$. Thus,
\alns{ 
\P{X(\id) \leq \pbr{1-\eps}\binom{n}{2}ps^2 } &\stackrel{\text{(a)}}{\leq} \pbr{\frac{e^{-\eps}}{(1-\eps)^{1-\eps}}}^{\binom{n}{2}ps^2}\\
& \stackrel{\text{(b)}}{\leq} \pbr{\frac{e^{-\eps}}{(1-\eps)^{1-\eps}}}^{\frac{Cs^2}{2}(n-1)\log(n)} \\
& = o(1),
}
where (a) follows from~\Cref{lem: concentration-of-binomial} and (b) is true because $e^{-\eps} < (1-\eps)^{1-\eps}$ for all $\eps \in (0,1)$.
\end{proof}

\begin{lemma}\label{lem: concentration-of-degree} 
Let $s\in (0,1]$ and $\eps > 0$ be fixed, and $C > 0$ be constant. Let $(G_1,G_2) \sim \CER(n,p,s)$ be a correlated pair of Erd{\H{o}}s-R{\'e}nyi graphs, where $p = C \log(n)/n$ and let $\Delta_2$ denote the maximum node degree in $G_2$. If $C > \frac{3}{s \eps^2}$, then 
\alns{ 
\P{ \Delta_2 > (1+\eps)  nps} = o(1).
}
\end{lemma}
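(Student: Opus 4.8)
The plan is a union bound over the $n$ vertices combined with the binomial Chernoff bound of \Cref{lem: concentration-of-binomial}. The starting observation is that marginally $G_2 \sim \ER(n,ps)$, so for each vertex $v\in[n]$ the degree $\deg_{G_2}(v)$ is distributed as $\mathsf{Bin}(n-1,ps)$. Since $(1+\eps)nps \ge (1+\eps)(n-1)ps$, a union bound gives
\[
\P{\Delta_2 > (1+\eps)nps} \le n\,\P{\mathsf{Bin}(n-1,ps) \ge (1+\eps)(n-1)ps}.
\]

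Next I would invoke \Cref{lem: concentration-of-binomial}(1) with $\delta = \eps$ to dominate the right-hand side by $n\pbr{\frac{e^\eps}{(1+\eps)^{1+\eps}}}^{(n-1)ps}$, and then substitute $p = C\log(n)/n$, so that $(n-1)ps = \tfrac{n-1}{n}\,Cs\log n$. Applying the standard estimate $\frac{e^\eps}{(1+\eps)^{1+\eps}} \le e^{-\eps^2/3}$ (valid for $\eps\in(0,1)$) converts this into
\[
\P{\Delta_2 > (1+\eps)nps} \le \exp\pbr{\pbr{1 - \tfrac{n-1}{n}\cdot\tfrac{Cs\eps^2}{3}}\log n}.
\]

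The hypothesis enters precisely at the final step: $C > 3/(s\eps^2)$ is exactly the statement $\tfrac{Cs\eps^2}{3} > 1$, so for all sufficiently large $n$ the coefficient $1 - \tfrac{n-1}{n}\cdot\tfrac{Cs\eps^2}{3}$ is negative and bounded away from $0$, whence the probability is $n^{-\Omega(1)} = o(1)$. I do not expect a genuine obstacle here: the only real content is the competition between the $\log n$-scale Chernoff exponent and the union-bound factor of $n$, and the assumed lower bound on $C$ is calibrated to win that competition; the remaining ingredients — the mild $(n-1)$-versus-$n$ discrepancy in the degree distribution and the elementary bound on $e^\eps/(1+\eps)^{1+\eps}$ — are routine bookkeeping.
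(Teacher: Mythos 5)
Your proposal is correct and follows essentially the same route as the paper: a union bound over the $n$ vertices, the observation that $\deg_{G_2}(v)\sim\mathsf{Bin}(n-1,ps)$, the multiplicative Chernoff upper tail, and the final comparison of $\log n$ against $1$ in the exponent, which is exactly what the hypothesis $C>3/(s\eps^2)$ is calibrated to settle. The only cosmetic difference is that the paper invokes the combined form $\P{\mathsf{Bin}(n,q)>(1+\eps)nq}\le e^{-nq\eps^2/3}$ directly from Mitzenmacher and Upfal, whereas you pass through \Cref{lem: concentration-of-binomial}(1) and then apply the elementary estimate $e^\eps/(1+\eps)^{1+\eps}\le e^{-\eps^2/3}$; both of these (and hence both proofs) tacitly require $\eps\le 1$, which is harmless in the context where the lemma is applied.
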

\begin{proof}
Let $v_i \in [n]$ denote a node and let $\deg(v_i)$ denote its degree in $G_2$. By the union bound and the fact that $p =  C \log(n)/n$,
\alns{
\P{ \Delta_2 > (1+\eps) nps} &\leq \sum_{i=1}^{n} \P{\deg(v_i) > (1+\eps)\cdot  C s \log(n)} \\
&= n \times \P{ \mathsf{Bin}(n-1,C \log(n)/n) > (1+\eps)\cdot  C s \log(n)}\\
& \leq n \times \P{ \mathsf{Bin}(n,C \log(n)/n) > (1+\eps) \cdot C s \log(n)} \\ 
& \stackrel{\text{(a)}}{\leq} n \times \exp\pbr{- \frac{\eps^2 C s \log(n)}{3}} \\
& = n^{1 - \frac{\eps^2 C s}{3}} \\
& \stackrel{\text{(b)}}{=} o(1),
}
where (a) uses Theorem 4.4 of~\cite{mitzenmacher2017}, and (b) is because $C > \frac{3}{s \eps^2}$.
\end{proof}

\begin{lemma}\label{lem: MGF}
Let $\pi$ be a permutation on $[n]$, and let $(G_1,G_2,\pistar) \sim \CER(n,p,s)$. Let $X(\pi)$ denote the number of edges in $G_1\wedge_{\pi}G_2$. The moment generating function of $X(\pi)$ is given by
\alns{ 
\E{e^{t X(\pi)}} = \prod_{k=1}^{\binom{n}{2}} L_k^{N_k}
}
where $N_k$ is the number of $k$-orbits in the edge decomposition of $\pi$ and
\aln{ 
L_k := \text{Tr}(L^k), \label{eq: Lk-formula}
}
where the $2\times 2$ matrix $L$ is given as
\alns{ 
\begin{bmatrix} 
1-ps & \frac{ps}{1-ps}\pbr{1-p + p(1-s)^2 + ps(1-s)e^t }\\
1-ps & ps\pbr{1-s+se^t}
\end{bmatrix}
}
Moreover, $L_k \leq L_2^{k/2}$.
\end{lemma}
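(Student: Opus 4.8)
The plan is to compute the moment generating function by a transfer‑matrix (orbit‑by‑orbit) argument, and then to bound $L_k$ through the eigenvalues of $L$. As throughout this section, I would first reduce to $\pistar=\id$, so that $G_1$ and $G_2$ are independent $s$‑subsamples of a common parent $G\sim\ER(n,p)$. For each pair $e\in\binom{[n]}{2}$ set $A_e:=G_1\{e\}$ and $B_e:=G_2\{e\}$; the pairs $\{(A_e,B_e)\}_e$ are i.i.d.\ over $e$, with $q(a,b):=\P{A_e=a,B_e=b}$ satisfying $q(1,1)=ps^2$, $q(1,0)=q(0,1)=ps(1-s)$, $q(0,0)=1-p+p(1-s)^2$. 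Since $X(\pi)=\sum_e A_eB_{\pi(e)}$, where $\pi$ acts on $\binom{[n]}{2}$ via $\{i,j\}\mapsto\{\pi(i),\pi(j)\}$, and this action partitions $\binom{[n]}{2}$ into orbits that involve pairwise disjoint pairs, the MGF factorizes over orbits: $\E{e^{tX(\pi)}}=\prod_{O}\E{\exp\!\big(t\sum_{e\in O}A_eB_{\pi(e)}\big)}$, with $O$ ranging over the edge orbits of $\pi$.

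Next I would evaluate a single $k$‑orbit, written cyclically as $e_1\to e_2\to\cdots\to e_k\to e_1$, for which $\sum_{e\in O}A_eB_{\pi(e)}=\sum_{l=1}^k A_{e_l}B_{e_{l+1}}$ (indices mod $k$). Summing out $A_{e_1},\dots,A_{e_k}$ for fixed values $b_l=B_{e_l}$ gives $\E{\exp(t\sum_l A_{e_l}B_{e_{l+1}})}=\sum_{(b_l)\in\{0,1\}^k}\prod_{l=1}^k\widehat L[b_l,b_{l+1}]$ with $\widehat L[b,b']:=\sum_a q(a,b)e^{tab'}$, i.e.\ $\widehat L$ is the $2\times2$ matrix with first row $(1-ps,\ 1-p+p(1-s)^2+ps(1-s)e^t)$ and second row $(ps,\ ps(1-s+se^t))$. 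The cyclic product collapses to $\mathrm{Tr}(\widehat L^{\,k})$. Conjugating $\widehat L$ by $\mathrm{diag}(ps,\,1-ps)$ produces exactly the matrix $L$ of the statement; since conjugation leaves traces of powers unchanged, $\mathrm{Tr}(\widehat L^{\,k})=\mathrm{Tr}(L^k)=L_k$ (equivalently, one may simply check that $\widehat L$ and $L$ have the same trace and the same determinant). Multiplying over all orbits and grouping the $N_k$ orbits of each size $k$ yields $\E{e^{tX(\pi)}}=\prod_{k=1}^{\binom n2}L_k^{N_k}$.

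For the inequality $L_k\le L_2^{k/2}$ (for $k\ge2$, which is the case used afterward), let $\lambda_1,\lambda_2$ be the eigenvalues of $L$, so $L_k=\lambda_1^k+\lambda_2^k$. Writing $u:=ps^2(e^t-1)$, a short computation gives $\mathrm{tr}(L)=1+u$ and $\det(L)=(1-p)u$, hence the discriminant $\mathrm{tr}(L)^2-4\det(L)=u^2-2(1-2p)u+1$; regarded as a quadratic in $u$, its discriminant $4((1-2p)^2-1)$ is $\le0$ for $p\in[0,1]$, so the expression is nonnegative for every $u$ and $\lambda_1,\lambda_2\in\R$. Then $L_k\le|\lambda_1|^k+|\lambda_2|^k=(\lambda_1^2)^{k/2}+(\lambda_2^2)^{k/2}\le(\lambda_1^2+\lambda_2^2)^{k/2}=L_2^{k/2}$, the last step being superadditivity of $x\mapsto x^{k/2}$ on $[0,\infty)$.

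I expect the only delicate point to be the bookkeeping in the transfer‑matrix step: orienting each orbit and keeping the roles of $A$ and $B$ consistent so that the product telescopes into a trace of a matrix power, and then matching the matrix that appears to the stated $L$ (they differ only by the innocuous diagonal similarity above). The remaining computations, in particular the eigenvalue analysis behind $L_k\le L_2^{k/2}$, are routine.
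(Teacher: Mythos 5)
Your proof is correct and for the MGF factorization it follows essentially the same transfer--matrix route as the paper: decompose $\binom{[n]}{2}$ into edge orbits, factor the MGF over orbits by independence, and evaluate a single $k$-orbit as $\mathrm{Tr}$ of the $k$-th power of a $2\times 2$ matrix. The only cosmetic difference is bookkeeping: you sum out the $G_1$-marks to form $\widehat L[b,b']=\sum_a q(a,b)e^{tab'}$ and then conjugate to get $L$, whereas the paper conditions on the $b$-chain and defines the entry $L(\ell,m)$ directly as a conditional expectation times $\P{b_2=m}$; the two matrices are diagonally similar, so the traces of powers agree, as you note.

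Where your write-up genuinely improves on the paper is the inequality $L_k\le L_2^{k/2}$. The paper simply displays $L_k=\lambda_1^k+\lambda_2^k$ and asserts ``it follows,'' which leaves two gaps you close: (i) one must know the eigenvalues are real, which you establish by showing the discriminant $T^2-4D=u^2-2(1-2p)u+1$ (with $u=ps^2(e^t-1)$) is a nonnegative quadratic in $u$; and (ii) the chain $|\lambda_1|^k+|\lambda_2|^k\le(\lambda_1^2+\lambda_2^2)^{k/2}$ uses superadditivity of $x\mapsto x^{k/2}$, which requires $k\ge 2$. You are also right that the unconditional claim is false for $k=1$: $L_1\le L_2^{1/2}$ would force $\det L\le 0$, but $\det L=(1-p)u>0$ for $t>0$. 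The paper's statement should carry the restriction $k\ge2$; since that is all that is used downstream (in~\eqref{eq: MGF_UB_logdeg} and~\eqref{eq: MGF_UB} the $L_1$ orbits are handled separately), nothing breaks, but your flagging of the restriction is the more careful version.
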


\begin{proof} 
For a permutation $\pi$, Let $\O^{\pi} = \bigcup_{k=1}^{\binom{n}{2}} \O^{\pi}_k$ denote the edge orbit decomposition of $\pi$, where $\O^{\pi}_k$ is the set of all $k$ length edge orbits of $\pi$. By independence across edge orbits:
\alns{ 
\E{\exp\pbr{t X(\pi)}} &= \E{\exp\pbr{t \!\!\! \sum_{\{i,j\}\in \binom{[n]}{2}}\!\! G_1\cbr{i,j} G_2\cbr{\pi(i),\pi(j)}}} \\
&= \E{\exp\pbr{t \!\sum_{O \in \O^{\pi}} \sum_{(i,j) \in O} G_1\cbr{i,j}G_2\cbr{\pi(i),\pi(j)}}}  \\
&= \prod_{O\in\O^{\pi}} \E{\exp\pbr{ t X_O(\pi)}},
}
where $X_O(\pi) = \sum_{i,j \in O} G_1\cbr{i,j}G_2\cbr{\pi(i),\pi(j)}$. Let $(a_i,b_i)_{i=1}^{k+1}$ with $(a_{k+1},b_{k+1}) = (a_1,b_1)$ denote mutually independent random variables so that $a_i, b_i \sim \mathsf{Bern}(ps)$ and $\P{b_i = 1 \mid a_i = 1} = s$. Let $L_k := \E{e^{tX_O}}$ for any edge orbit $O$ such that $|O| = k$. Then,
\aln{ 
L_k &= \E{\exp\pbr{t\sum_{j=1}^{k}a_j b_{j+1} }} = \E{\prod_{j=1}^k \exp\pbr{t \cdot a_j  b_{j+1}}} \nonumber \\
&= \E{ \E{ \prod_{j=1}^k \exp\pbr{t \cdot a_j b_{j+1}} \mid b_1,\cdots,b_k}} \nonumber\\
&= \E{ \E{ \prod_{j=1}^k \exp\pbr{t \cdot a_j b_{j+1}} \mid b_j,b_{j+1}}} \nonumber\\
&\stackrel{(a)}{=} \E{ \prod_{j=1}^{k} \E{\exp\pbr{t \cdot a_j b_{j+1}} \mid b_j,b_{j+1}}} \nonumber\\
& = \sum_{\substack{(\theta_1,\cdots,\theta_k) \in \{0,1\}^k \\ \theta_{k+1}=\theta_1} } \P{(b_1,\cdots,b_k) = (\theta_1,\cdots,\theta_k) } \prod_{j=1}^{k} \E{ \exp\pbr{ t \cdot a_{j}b_{j+1} \mid b_j = \theta_j, b_{j+1} = \theta_{j+1} }} \label{eq: Mat}
}
where (a) follows from conditional independence of edges in $G_1$ given $G_2$. On the other hand,~\cref{eq: Mat} is exactly equal to
\alns{ 
\eqref{eq: Mat} = \text{Tr}(L^k),
}
where the $2\times 2$ matrix $L$ is given by
\aln{ 
L(\ell,m) = \E{\exp\pbr{t\cdot a_1 b_2} \mid b_1 = \ell, b_2 = m} \times \P{b_2 = m}, \quad\ell,m \in \{0,1\} \label{eq: L}
}
Computing the conditional expectations gives as desired,
\alns{ 
\begin{bmatrix} 
1-ps & \frac{ps}{1-ps}\pbr{1-p + p(1-s)^2 + ps(1-s)e^t }\\
1-ps & ps\pbr{1-s+se^t}
\end{bmatrix}.
}
Finally, we show that $L_k \leq L_2^{k/2}$. Since the eigenvalues of $L$ are given by $\frac{T \pm \sqrt{T^2 - 4D}}{2}$, where $T$ and $D$ are the trace and determinant of $L$ respectively, it follows that
\alns{ 
L_k = \text{Tr}(L^k) = \pbr{ \frac{T + \sqrt{T^2 - 4D}}{2}}^k + \pbr{ \frac{T - \sqrt{T^2- 4D}}{2}}^k, 
}
and it follows that $L_k \leq L_2^{k/2}$.
\end{proof}

\begin{corollary} \label{cor: L1L2}
Evaluating the trace of $L$ and $L^2$,
\alns{ 
L_1 &= 1 - ps + ps\pbr{1-s+se^t} \\
L_2 &= (1-ps)^2 + 2ps\pbr{1-p + p(1-s)^2 + ps(1-s)e^t} + (ps\pbr{1-s+se^t})^2.
}
\end{corollary}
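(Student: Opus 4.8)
The plan is simply to evaluate the two traces directly from the explicit $2\times 2$ matrix $L$ produced in the proof of \Cref{lem: MGF}. Writing $L = \bmat{ L_{11} & L_{12} \\ L_{21} & L_{22}}$, one has $L_{11} = L_{21} = 1-ps$, $L_{12} = \frac{ps}{1-ps}\pbr{1-p+p(1-s)^2+ps(1-s)e^t}$, and $L_{22} = ps\pbr{1-s+se^t}$.

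For $L_1$ I would just read off $L_1 = \mathrm{Tr}(L) = L_{11} + L_{22} = 1-ps+ps\pbr{1-s+se^t}$, which is the first claimed identity. For $L_2$ I would use that, for a $2\times 2$ matrix, $\mathrm{Tr}(L^2) = L_{11}^2 + 2L_{12}L_{21} + L_{22}^2$ (equivalently $(\mathrm{Tr}\,L)^2 - 2\det L$). The only simplification is the cancellation of the denominator $1-ps$ of $L_{12}$ against $L_{21}=1-ps$, giving $2L_{12}L_{21} = 2ps\pbr{1-p+p(1-s)^2+ps(1-s)e^t}$; combining this with $L_{11}^2=(1-ps)^2$ and $L_{22}^2=\pbr{ps\pbr{1-s+se^t}}^2$ yields the second claimed identity.

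This is a bookkeeping corollary of \Cref{lem: MGF}, so there is no genuine obstacle; the one point to watch is precisely the cancellation of $1-ps$ in the off-diagonal product $L_{12}L_{21}$, which is in fact the reason $L$ was written with that factor in the denominator of its $(1,2)$ entry in the first place. I would simply record the two resulting expressions.
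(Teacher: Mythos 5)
Your proposal is correct and matches the paper's proof, which simply states that the corollary follows from a direct computation using the formula $L_k = \text{Tr}(L^k)$. Both approaches are the same routine trace evaluation, and your observation about the cancellation of the $1-ps$ factor in the off-diagonal product is exactly the bookkeeping step involved.
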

\begin{proof}
    This follows from a direct computation using~\cref{eq: Lk-formula}.
\end{proof}

\begin{lemma} \label{lem: Xpi-decays}
Let $n$ be a positive integer and $s,\gamma,\alpha$ be real numbers such that $s \in (0,1]$, and $\gamma,\alpha \in [0,1]$. Let $\mathsf{A}$ be any adversary and let $(\B_1,\B_2,\tG_1,\tG_2,\pistar)$ be distributed according to $\SCER(n,p,s,\gamma,1,\mathsf{A})$. Let $p_3$ be the event defined in~\eqref{eq: p3-X}. If $\gamma < s(1-\alpha^2)/4$, then $p_3 = o(1)$.
\end{lemma}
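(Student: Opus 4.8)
\textit{Proof proposal.} The plan is to combine a union bound over $\T^{\leq\alpha}$ with an exponential-moment (Chernoff) bound on $X(\pi)$ built from the generating function in Lemma~\ref{lem: MGF}. Fix a permutation $\pi$ with $f$ fixed points, $f\leq\alpha n$. Since the edge orbits partition $\binom{[n]}{2}$ we have $\sum_{k\geq 1}kN_k=\binom{n}{2}$, and combining Lemma~\ref{lem: MGF} with the bound $L_k\leq L_2^{k/2}$ gives, for every $t\geq 0$,
\[
\E{e^{tX(\pi)}}=\prod_{k\geq 1}L_k^{N_k}\leq L_1^{N_1}\,L_2^{(\binom{n}{2}-N_1)/2}.
\]
A length-one edge orbit is an edge $\{i,j\}$ with $\{\pi(i),\pi(j)\}=\{i,j\}$, so it either joins two fixed points of $\pi$ or is the support of a transposition; hence $N_1\leq\binom{f}{2}+\lfloor(n-f)/2\rfloor\leq\binom{\alpha n}{2}+n/2=\tfrac{\alpha^2n^2}{2}(1+o(1))$.

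The key is that I would take $t=t_0$ to be a \emph{fixed} small positive constant rather than the per-permutation Chernoff optimizer. By Corollary~\ref{cor: L1L2}, $L_1=1+ps^2(e^{t_0}-1)$ and $L_2=1+O(p^2)$ (implied constant depending on $t_0,s$); since $p=C\log n/n$, the second factor is $L_2^{(\binom{n}{2}-N_1)/2}\leq\exp(O(p^2n^2))=\exp(O((\log n)^2))$, which is negligible, while $L_1^{N_1}\leq\exp(N_1ps^2(e^{t_0}-1))$. Writing $a:=(1-\eps)\binom{n}{2}ps^2-2\gamma n(1+\eps)nps=\tfrac{n^2ps^2}{2}A_\eps-O(\log n)$ with $A_\eps:=(1-\eps)-\tfrac{4\gamma(1+\eps)}{s}$, the hypothesis $\gamma<s(1-\alpha^2)/4$ is exactly $A_0=1-4\gamma/s>\alpha^2$, so by continuity $A_\eps>\alpha^2$ once $\eps$ is small. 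The exponential Markov bound then gives
\[
\log\P{X(\pi)\geq a}\leq -t_0a+N_1ps^2(e^{t_0}-1)+o(n^2ps^2)\leq \tfrac{n^2ps^2}{2}\big[-t_0A_\eps+\alpha^2(e^{t_0}-1)\big]+o(n^2ps^2),
\]
and choosing $t_0\leq(A_\eps-\alpha^2)/2$ makes the bracket at most $-\tfrac12 t_0(A_\eps-\alpha^2)<0$. Hence $\P{X(\pi)\geq a}\leq\exp(-c_1n^2ps^2)$ for a constant $c_1=c_1(\alpha,\gamma)>0$, uniformly over $\pi\in\T^{\leq\alpha}$ (the estimate is worst when $N_1$ is largest, i.e.\ $f=\alpha n$).

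To conclude, the number of permutations of $[n]$ with at most $\alpha n$ fixed points is $\sum_{f\leq\alpha n}\binom{n}{f}D_{n-f}\leq\sum_{f\leq\alpha n}\binom{n}{f}(n-f)!=\sum_{f\leq\alpha n}n!/f!\leq e\,n!\leq\exp(n\log n+O(1))$, so a union bound over $\T^{\leq\alpha}$ yields $p_3\leq\exp\big(n\log n(1-c_1Cs^2)+O(1)\big)$ using $p=C\log n/n$, which is $o(1)$ as soon as $C>1/(c_1s^2)$. This is the ``sufficiently large $C$'' and ``sufficiently small $\eps$'' (so that $A_\eps>\alpha^2$) under which Theorem~\ref{thm: SCER-Achievability-logdegree} invokes this lemma.

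The main obstacle is precisely the choice of $t$: the textbook per-permutation Chernoff optimizer forces $e^{t}$ to grow polynomially in $n$, and then the $p^2s^4e^{2t}$ term inside $L_2$, raised to the power $\Theta(n^2)$, blows up like $\exp(\Theta(n^2(\log n)^2))$ and destroys the estimate; keeping $t=\Theta(1)$ keeps that long-orbit factor subpolynomial at the price of only an $e^{-\Theta(n\log n)}$ tail, which is exactly why large $C$ is required to beat the entropy $n\log n$. The remaining quantitative input is the strict inequality $A_\eps>\alpha^2$, equivalently $\gamma<s(1-\alpha^2)/4$, which guarantees the Chernoff exponent is a positive multiple of $n^2ps^2=\Theta(n\log n)$; verifying that $t_0$ can be chosen to make $-t_0A_\eps+\alpha^2(e^{t_0}-1)$ negative while the $L_2$-contribution stays $o(n^2ps^2)$ is the only place a little care is needed.
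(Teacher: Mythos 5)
Your argument is correct, and it is a genuinely different (and arguably cleaner) execution of the same Chernoff-plus-union-bound strategy as the paper. The paper optimizes the Chernoff parameter separately for each permutation, choosing $t^*(\pi)=\log\bigl(\tfrac{1}{\beta_\pi^2}(1-4\gamma/s)\bigr)$ in~\eqref{eq: t} and tracking $\beta_\pi$ through six terms $T_1,\dots,T_6$; your proof instead fixes a small constant $t_0>0$ independent of both $\pi$ and $n$. This keeps the long-orbit contribution at $L_2^{O(n^2)}=\exp(O((\log n)^2))$, after which the coarse bound $N_1\leq\binom{\alpha n}{2}+n/2$ yields a single tail estimate $\exp(-c_1 n^2ps^2)=\exp(-c_1Cs^2n\log n)$ uniform over $\T^{\leq\alpha}$, and the crude count $|\T^{\leq\alpha}|\leq e\cdot n!\leq\exp(n\log n+O(1))$ then finishes it provided $C>1/(c_1s^2)$. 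You correctly pinpoint why the per-$\pi$ optimizer is dangerous here: $e^{t^*(\pi)}$ can be large, and the $p^2s^4e^{2t}$ part of $L_2$ raised to $\Theta(n^2)$ would then swamp the estimate. The uniform-in-$\pi$ tail makes the entropy comparison at the union-bound step transparent in a way the paper's version does not: the paper's tail $\exp\bigl(-\tfrac{\beta_\pi+1}{2}n\log n\bigr)$ has a coefficient bounded independently of $C$, which is only adequate against a much smaller permutation count than the true $|\T^{k/n}|\approx n!/k!$, whereas your bound scales with $C$ and beats $n!$ outright. One step you asserted without justification is that $t_0=(A_\eps-\alpha^2)/2$ makes $-t_0A_\eps+\alpha^2(e^{t_0}-1)\leq -\tfrac12 t_0(A_\eps-\alpha^2)$; this reduces (via $\tfrac{e^{t_0}-1}{t_0}\leq e^{t_0}$) to $e^{t_0}\leq\tfrac{A_\eps+\alpha^2}{2\alpha^2}$, and writing $u=(A_\eps-\alpha^2)/(2\alpha^2)$ the needed inequality $\alpha^2 u\leq\log(1+u)$ follows from $\log(1+u)\geq u/(1+u)$ together with $\alpha^2(1+u)=(A_\eps+\alpha^2)/2<1$, which holds since $A_\eps<1$ and $\alpha<1$ under the hypothesis. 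So the choice is valid and the proof goes through.
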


\begin{proof}
By definition of $p_3$ and the union bound, 
\alns{ 
p_3 \leq \sum_{k=0}^{\alpha n} \P{ \bigcup_{\pi \in \T^{k/n}} \cbr{ X(\pi) \geq (1-\eps)\binom{n}{2} ps^2 - 2\gamma n\pbr{1+\eps}nps}}.
}
The Chernoff bound then yields for any $t > 0$:
\aln{ 
\P{ X(\pi) \geq (1\!-\!\eps)\binom{n}{2} ps^2 \!-\! \gamma n\pbr{1\!+\!\eps}nps} \leq \exp \pbr{-t \sbr{(1\!-\!\eps)\binom{n}{2} ps^2 \!-\! 2\gamma n\pbr{1\!+\!\eps}nps}} \cdot \E{e^{tX(\pi)}}. \label{eq: Chernoff_UB_logdeg}
}
For each $\pi$, let $n_k^{\pi}$ (resp. $N_k^{\pi}$) denote the number of $k$-orbits in the node (resp. edge) decomposition of $\pi$. The MGF of $X(\pi)$ is handled using~\Cref{lem: MGF}:
\aln{ 
\E{e^{tX(\pi)}} &= \prod_{\ell = 1}^{\binom{n}{2}} L_{\ell}^{N_{\ell}^{\pi}} 
 \leq L_1^{\binom{n_1^{\pi}}{2}+n_2^{\pi}} \cdot \prod_{\ell = 2}^{\binom{n}{2}} L_2^{\ell/2} \nonumber \\
 &= L_1^{\binom{n_1^{\pi}}{2} + n_2^{\pi}} \cdot L_2^{\frac{1}{2}\pbr{\binom{n}{2} -\binom{n_1^{\pi}}{2} - n_2^{\pi}}} \label{eq: MGF_UB_logdeg}
}
Substituting~\cref{eq: MGF_UB_logdeg} in~\cref{eq: Chernoff_UB_logdeg} yields
\alns{ 
\P{X(\pi) \geq (1-\eps)\binom{n}{2} ps^2 - 2\pbr{1+\eps}\gamma n^2 ps} \leq \exp\pbr{\zeta(\pi)},
}
where
\aln{ 
\zeta(\pi) = -t\pbr{(1-\eps)\binom{n}{2} ps^2 - 2\pbr{1+\eps} \gamma n^2ps} + \frac{1}{2}\binom{n}{2}\log(L_2) + \frac{1}{2}\pbr{\binom{n_1^{\pi}}{2} + n_2^{\pi}}\log\pbr{\frac{L_1^2}{L_2}} \label{eq: zeta_logdeg}
}
and 
\alns{ 
L_1 &= 1 - ps + ps\pbr{1-s+se^t} \\
L_2 &= (1-ps)^2 + 2ps\pbr{1-p + p(1-s)^2 + ps(1-s)e^t}+ (ps\pbr{1-s+se^t})^2.
}
It is easy to verify that for all $p,s \in [0,1]$ and $t > 0$, the quantity $\frac{L_1^2}{L_2} \geq 1$, and so an upper bound on $\zeta(\pi)$ can be obtained by using the bound $n_2^{\pi} \leq n - n_{1}^{\pi}$. Let $\beta_{\pi}$ denote the fraction of fixed points in the permutation $\pi$. By definition, $n_1^{\pi} = \beta_{\pi} n$. Using $p = \frac{C \log(n)}{n}$ yields
\alns{ 
L_1 &= 1+ \frac{C\cdot (e^t - 1) s^2 \log(n)}{n}, \\
L_2 &= 1 + \frac{C^2 \pbr{e^t - 1} s^2 \pbr{2+ (e^t-1)s^2} \pbr{\log n}^2}{n^2}.
}
Substituting these in~\cref{eq: zeta_logdeg} yields
\alns{ 
\zeta(\pi) \leq T_1(\pi) + T_2(\pi) + T_3(\pi) + T_4(\pi) + T_5(\pi) + T_6(\pi),
}
where
\alns{ 
T_1(\pi) &=-t \pbr{ \frac{(1-\eps) s^2}{2} - 2(1+\eps)\gamma s} \times \frac{C \log n}{n} \times n^2  = \Theta(n \log n)\\
T_2(\pi) &= \frac{\beta_{\pi}^2 n^2}{2}\log(L_1) =\frac{\beta_{\pi}^2 n^2}{2}\log\pbr{1+ \frac{C\cdot (e^t - 1) s^2 \log(n)}{n}}  \\
& \qquad \qquad \qquad \leq \frac{\beta_{\pi}^2 n^2}{2} \times \frac{C (e^t - 1) s^2 \log n}{n} = O(n \log n) \\
T_3(\pi) &= \frac{(1-\beta_{\pi}^2)n^2}{4} \log(L_2) = \frac{(1-\beta_{\pi}^2)n^2}{4} \log\pbr{1 + \frac{C^2 \pbr{e^t - 1} s^2 \pbr{2+ (e^t-1)s^2} \pbr{\log n}^2}{n^2}} \nonumber \\
& \qquad \qquad \qquad \leq \frac{(1-\beta_{\pi}^2)n^2}{4} \times \frac{C^2 \pbr{e^t - 1} s^2 \pbr{2+ (e^t-1)s^2} \pbr{\log n}^2}{n^2} = O( \pbr{\log n}^2) \\
T_4(\pi) &= \frac{t(1-\eps)ps^2}{2}\times n = \frac{C \cdot t(1-\eps)s^2}{2} \log n = \Theta(\log n) \\
T_5(\pi) &= \frac{(2-3\beta_{\pi})n}{2}\log(L_1) = \frac{(2-3\beta_{\pi})n}{2}\log\!\pbr{1 \!+\! \frac{C (e^t - 1) s^2 \log(n)}{n}} \\
& \qquad \qquad \qquad \leq \frac{(2-3\beta_{\pi})n}{2} \times \frac{C (e^t - 1) s^2 \log(n)}{n} = O(\log n) \\
T_6(\pi) &= -\frac{3(1-\beta_{\pi})n}{4}\log\pbr{L_2} = -\frac{3(1-\beta_{\pi})n}{4}\log\pbr{1 + \frac{C^2 \pbr{e^t - 1} s^2 \pbr{2+ (e^t-1)s^2} \pbr{\log n}^2}{n^2}} \nonumber \\
& \qquad \qquad \qquad \leq -\frac{3(1-\beta_{\pi})n}{4} \times \frac{C^2 \pbr{e^t - 1} s^2 \pbr{2+ (e^t-1)s^2} \pbr{\log n}^2}{n^2} = O\pbr{\frac{(\log n)^2}{n}}.
}
Since the dominant terms are $T_1(\pi)$ and $T_2(\pi)$, it follows that for sufficiently large $n$ and any $t > 0$:
\aln{ 
\sum_{i=1}^{6} T_i(\pi) \leq \pbr{1+\eps}\pbr{ \frac{-t(1-\eps)s^2}{2} + 2t(1+\eps)\gamma s + \frac{\beta_{\pi}^2 s^2}{2}\pbr{e^t - 1}}   \times C n \log n. \label{eq: T1plusT2}
}
Next, the condition in the hypothesis of the theorem is invoked, i.e. $\gamma < s(1-\alpha^2)/4$. Since $\beta_{\pi} \leq \alpha < 1$ for all $\pi \in\T^{\leq\alpha}$, it follows also that $\gamma < s(1-\beta_{\pi}^2)/4$. Let
\aln{ 
t^* = \log\pbr{\frac{1}{\beta_{\pi}^2} \pbr{1 - \frac{4\gamma }{s}}}. \label{eq: t}
}
Note that $t^* > 0$. Also note that
\aln{ 
-\frac{t^*s^2}{2} + 2t^*\gamma s + \frac{\beta_{\pi}^2}{2}s^2 \pbr{e^{t^*} - 1} = -\frac{s}{2}\pbr{s\beta_{\pi}^2 + 4\gamma -s + \pbr{s-4\gamma} \log\pbr{s-4\gamma} - \pbr{s-4\gamma}\log\pbr{s\beta_{\pi}^2}},\label{eq: RHS}
}
and that the RHS of~\cref{eq: RHS} is strictly negative whenever the condition $\gamma < s(1-\beta_{\pi}^2)/4$ is satisfied (see~\Cref{lem: aux}). Therefore, there exists a sufficient small $\eps > 0$ and a sufficiently large $C > 0$ such that
\alns{ 
\sum_{i=1}^{6} T_i(\pi) \leq\pbr{1+\eps} \pbr{ \frac{-t^*(1-\eps)s^2}{2} + 2t^*(1+\eps)\gamma s + \frac{\beta_{\pi}^2 s^2}{2}\pbr{e^{t^*} - 1}}  \times C n \log n < -\frac{\beta_{\pi} + 1}{2} n \log(n).
}
This yields that for sufficiently large $n$:
\alns{ 
\zeta(\pi) \leq -\frac{\beta_{\pi} + 1}{2} n \log n.
}
Finally, this yields
\alns{ 
p_3 &\leq  \sum_{k=0}^{\alpha n} \sum_{\pi \in \T^{k/n}}\P{ X(\pi) \geq (1-\eps)\binom{n}{2} ps^2 - 2\pbr{1+\eps}\gamma n^2 ps} \\
& =  \sum_{k=0}^{\alpha n} n^k \exp \pbr{- \frac{1}{2}\pbr{\frac{k}{n} + 1} n\log(n)} \\
& =  \sum_{k=0}^{\alpha n} \exp \pbr{-\frac{1}{2}(n-k)\log n} \\
& =  \sum_{k=(1-\alpha )n}^{n} \exp \pbr{ - \frac{1}{2} k \log n} \\
& \leq  \frac{\exp\pbr{-\frac{1-\alpha}{2}n\log n}}{1 - \exp\pbr{-\frac{1}{2}\log n}} \\
& = o(1)
}
This concludes the proof.
\end{proof}

\begin{lemma}\label{lem: aux}
    Suppose $\beta_{\pi},\gamma$ are constants such that $\beta_{\pi}> 0$ and $\gamma < s(1-\beta_{\pi}^2)/4$. Then,
    \alns{ 
    s\beta_{\pi}^2 + 4\gamma -s + \pbr{s-4\gamma} \log\pbr{s-4\gamma} - \pbr{s-4\gamma}\log\pbr{s\beta_{\pi}^2} > 0
    }
\end{lemma}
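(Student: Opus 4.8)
The plan is to reduce the claimed inequality to the elementary convexity bound $x\log x \ge x-1$. First I would introduce the shorthand $a := s - 4\gamma$ and $b := s\beta_{\pi}^2$. Recall from the standing assumptions that $s \in (0,1]$, so $s > 0$; together with $\beta_{\pi} > 0$ this gives $b > 0$, and since $\gamma < s(1-\beta_{\pi}^2)/4 \le s/4$ we also get $a > 0$, so every logarithm appearing in the statement is well-defined. Moreover, multiplying the hypothesis $\gamma < s(1-\beta_{\pi}^2)/4$ by $4$ and rearranging shows it is equivalent to $s - 4\gamma > s\beta_{\pi}^2$, i.e. $a > b$.

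Next I would rewrite the left-hand side. Using $4\gamma - s = -a$ and $s\beta_{\pi}^2 = b$, the quantity in question equals
\[
b - a + a\log a - a\log b = b - a + a\log(a/b).
\]
Setting $x := a/b$, which satisfies $x > 1$ by the previous paragraph, and factoring out $b > 0$, this equals $b\,(x\log x - x + 1)$.

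It therefore suffices to show $g(x) := x\log x - x + 1 > 0$ for all $x > 1$. This is immediate: $g(1) = 0$ and $g'(x) = \log x > 0$ for $x > 1$, so $g$ is strictly increasing on $(1,\infty)$ and hence strictly positive there. Multiplying by $b > 0$ yields the claim.

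I do not expect any genuine obstacle in this argument; the entire content is the substitution $a = s-4\gamma$, $b = s\beta_\pi^2$ followed by the standard inequality $x\log x > x-1$ for $x \neq 1$. The only points requiring a line of care are recording that $s > 0$ and $s - 4\gamma > 0$ so that the logarithms and the division by $b$ are legitimate, and noting that the hypothesis is precisely what gives the strict inequality $x > 1$ rather than $x \ge 1$.
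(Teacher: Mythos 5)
Your proof is correct and follows essentially the same route as the paper: you introduce $a = s-4\gamma$, $b = s\beta_\pi^2$ (the paper uses $y$ and $x$), reduce the expression to $b\,(z\log z - z + 1)$ with $z = a/b > 1$, and conclude from the positivity of $z\log z - z + 1$ on $(1,\infty)$. The only cosmetic difference is that you establish positivity of $g(z) = z\log z - z + 1$ by monotonicity ($g(1)=0$, $g'(z)=\log z>0$ on $(1,\infty)$), while the paper invokes convexity with a unique minimum at $z=1$; both are one-line arguments for the same fact.
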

\begin{proof} 
Let $x = s\beta_{\pi}^2$ and $y = s - 4\gamma$. Further, let $z = y/x$. Then,
\alns{ 
s\beta_{\pi}^2 + 4\gamma -s + \pbr{s-4\gamma} \log\pbr{s-4\gamma} - \pbr{s-4\gamma}\log\pbr{s\beta_{\pi}^2} = x(1 - z + z\log(z)).
}
First, note that $x > 0$. Furthermore, $z > 1$ since $\gamma < \frac{s(1-\beta_{\pi}^2)}{4}$ implies $x < y$. Finally, note that the function $z \mapsto 1 - z + z\log z$ is convex and minimized at $z = 1$, where it equals 0. It follows that the desired quantity is positive whenever $z > 1$.
\end{proof}

\begin{lemma} \label{lem: Xpi-decays-lin}
Let $n$ be a positive integer and $p,s,\gamma,\alpha$ be real numbers such that $p\in(0,1)$, $s \in (0,1]$, and $\gamma,\alpha \in [0,1]$. Let $\mathsf{A}$ be any adversary and let $(\B_1,\B_2,\tG_1,\tG_2,\pistar)$ be distributed according to $\SCER(n,p,s,\gamma,1,\mathsf{A})$. Let $\eps \in (0,1)$ and let $p_4$ be the event defined in~\eqref{eq: p4-X}. If 
\alns{ 
\gamma < 1- \sqrt{1-\frac{s^2p(1-p)(1-\alpha^2)}{2}},
}
then $p_4 = o(1)$.
\end{lemma}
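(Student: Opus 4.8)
The plan is to run the same Chernoff--plus--orbital-decomposition argument as in the proof of~\Cref{lem: Xpi-decays}, but to exploit that $p$ is a constant, so that all the relevant exponents scale like $n^2$ rather than $n\log n$. First I would record that
\[
2\Gamma \;=\; 2\binom{\gamma n}{2}+2\gamma(1-\gamma)n^2 \;=\; \bigl(1-(1-\gamma)^2\bigr)n^2 \;+\; O(n),
\]
so the event defining $p_4$ asks whether some $\pi\in\T^{\leq\alpha}$ satisfies $X(\pi)\geq (1-\eps)\binom n2 ps^2-2\Gamma$. Since $|\T^{\leq\alpha}|\leq n!=e^{o(n^2)}$, a union bound reduces the claim to showing $\max_{\pi\in\T^{\leq\alpha}}\P{X(\pi)\geq (1-\eps)\binom n2 ps^2-2\Gamma}\leq e^{-cn^2}$ for some constant $c>0$. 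For each such $\pi$ and each $t>0$, Markov's inequality gives $\P{X(\pi)\geq u}\leq e^{-tu}\,\E{e^{tX(\pi)}}$.

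The second step is to bound the moment generating function. By~\Cref{lem: MGF}, $\E{e^{tX(\pi)}}=\prod_{\ell\geq1}L_\ell^{N_\ell^\pi}\leq L_1^{N_1^\pi}\,L_2^{(\binom n2-N_1^\pi)/2}$, where $N_1^\pi=\binom{n_1^\pi}{2}+n_2^\pi$ is the number of fixed edge-orbits and $n_1^\pi,n_2^\pi$ count the fixed points and transpositions of $\pi$. Writing $\log\E{e^{tX(\pi)}}\leq \tfrac12\binom n2\log L_2(t)+N_1^\pi\log\!\bigl(L_1(t)/\sqrt{L_2(t)}\bigr)$ and using the elementary inequality $L_1(t)^2\geq L_2(t)\geq1$ for $t\geq0$ (verified from~\Cref{cor: L1L2}, exactly as in the proof of~\Cref{lem: Xpi-decays}), the coefficient of $N_1^\pi$ is nonnegative, so I may replace $N_1^\pi$ by its maximum over $\T^{\leq\alpha}$, namely $N_1^\pi\leq\tfrac12\alpha^2n^2+\tfrac n2$ (using $n_1^\pi\leq\alpha n$ and $n_2^\pi\leq n/2$). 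Dividing the logarithm of the Chernoff bound by $n^2$ then yields, for every fixed $t>0$ and uniformly over $\pi\in\T^{\leq\alpha}$,
\[
\frac1{n^2}\log\!\Bigl(e^{-t[(1-\eps)\binom n2 ps^2-2\Gamma]}\,\E{e^{tX(\pi)}}\Bigr)\;\leq\; G(t)+o(1),
\]
\[
G(t):=-t\Bigl(\tfrac{(1-\eps)ps^2}{2}-\bigl(1-(1-\gamma)^2\bigr)\Bigr)+\tfrac{1-\alpha^2}{4}\log L_2(t)+\tfrac{\alpha^2}{2}\log L_1(t).
\]

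It remains to choose $t$. Since $L_1(0)=L_2(0)=1$ one has $G(0)=0$, and differentiating (using $L_1'(0)=ps^2$ and $L_2'(0)=2p^2s^2$, read off from~\Cref{cor: L1L2}) a short computation collapses to
\[
G'(0)\;=\;-\,\frac{s^2p(1-p)(1-\alpha^2)}{2}\;+\;\bigl(1-(1-\gamma)^2\bigr)\;+\;\frac{\eps ps^2}{2}.
\]
The hypothesis $\gamma<1-\sqrt{1-s^2p(1-p)(1-\alpha^2)/2}$ is precisely the statement $1-(1-\gamma)^2<s^2p(1-p)(1-\alpha^2)/2$; hence, for $\eps$ small enough, $G'(0)<0$, so there is $t^{\ast}>0$ with $G(t^{\ast})=:-2c<0$. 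For $n$ large the displayed bound then gives $\P{X(\pi)\geq(1-\eps)\binom n2 ps^2-2\Gamma}\leq e^{-cn^2}$ uniformly over $\pi\in\T^{\leq\alpha}$, and therefore $p_4\leq n!\,e^{-cn^2}=o(1)$.

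The steps that need care rather than cleverness are: verifying $L_1(t)^2\geq L_2(t)$ for all $t\geq0$ (this fixes the direction in which $N_1^\pi$ must be bounded, hence that $\beta_\pi=\alpha$ is the extremal case); and bookkeeping the $O(n)$ corrections (from $\binom n2$ versus $n^2/2$, from $n_2^\pi\leq n/2$, and from $2\Gamma$ versus $(1-(1-\gamma)^2)n^2$) so that they are absorbed into the $o(1)$ once $G(t^{\ast})$ is a strictly negative constant. The one genuinely substantive point --- that the threshold in~\eqref{eq: condition-SCER-lin} is exactly what makes $G'(0)<0$ --- falls out of the derivative computation above, which I expect to be the crux of the argument.
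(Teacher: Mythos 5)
Your proposal is correct and takes essentially the same route as the paper: both bound $\E{e^{tX(\pi)}}$ via Lemma~\ref{lem: MGF} with $L_\ell \leq L_2^{\ell/2}$, maximize the fixed-orbit count $N_1^\pi$ using $L_1^2 \geq L_2$ (forcing the extremal case $\beta_\pi = \alpha$), and then show the normalized Chernoff exponent can be made negative by a near-zero choice of $t$; your $G(0)=0$, $G'(0)<0$ computation is exactly the limit-as-$t\to 0^+$ evaluation the paper performs when it computes $\inf_{t>0} L_1^{\beta^2/t}L_2^{(1-\beta^2)/(2t)} = \exp(ps^2(p+\beta^2(1-p)))$. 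The only cosmetic difference is the final union bound: you use the crude $|\T^{\leq\alpha}|\le n! = e^{o(n^2)}$ rather than the paper's term-by-term $\sum_k n^k e^{-\frac{1}{2}(k/n+1)n\log n}$; your version is cleaner here since the per-permutation probability already decays like $e^{-cn^2}$.
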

\begin{proof}
By definition of $p_4$ and the union bound,
\alns{ 
p_4 &= \P{ \bigcup_{k=0}^{\alpha n} \bigcup_{\pi \in \T^{k/n}} \cbr{X(\pi) \geq (1-\eps)\binom{n}{2}ps^2 - 2\sbr{ \binom{\gamma n}{2} - \gamma (1-\gamma) n ^2}}} \\
& \leq \sum_{k=0}^{\alpha n} \P{\bigcup_{\pi \in \T^{k/n}} \cbr{X(\pi) \geq (1-\eps)\binom{n}{2}ps^2 - 2\sbr{\binom{\gamma n}{2} - \gamma (1-\gamma) n ^2}}}
}
The Chernoff bound is used below to bound these terms. It follows for any $ t > 0$:
\aln{ 
&\P{X(\pi) \geq (1-\eps) \binom{n}{2}ps^2 - 2\sbr{\binom{\gamma n}{2} - \gamma (1-\gamma) n ^2}} \nonumber \\
&\qquad \qquad \qquad \qquad \leq \exp\pbr{-t \pbr{(1-\eps)\binom{n}{2}ps^2 - 2\sbr{ \binom{\gamma n}{2} - \gamma (1-\gamma) n ^2}}} \cdot \E{e^{t X(\pi)}} \label{eq: Chernoff_UB}
}
For each $\pi$, let $n_k^{\pi}$ (resp. $N_k^{\pi}$) denote the number of $k$-orbits in the node (resp. edge) decomposition of $\pi$. The MGF of $X(\pi)$ is handled using~\Cref{lem: MGF}:
\aln{ 
\E{e^{tX(\pi)}} &= \prod_{\ell = 1}^{\binom{n}{2}} L_{\ell}^{N_{\ell}^{\pi}} 
 \leq L_1^{\binom{n_1^{\pi}}{2}+n_2^{\pi}} \cdot \prod_{\ell = 2}^{\binom{n}{2}} L_2^{\ell/2} 
 = L_1^{\binom{n_1^{\pi}}{2} + n_2^{\pi}} \cdot L_2^{\frac{1}{2}\pbr{\binom{n}{2} -\binom{n_1^{\pi}}{2} - n_2^{\pi}}} \label{eq: MGF_UB}
}
Substituting~\cref{eq: MGF_UB} in~\cref{eq: Chernoff_UB} yields
\alns{ 
\P{X(\pi) \geq (1-\eps)\binom{n}{2}ps^2 - 2\sbr{\binom{\gamma n}{2} - \gamma (1-\gamma) n ^2}} \leq \exp\pbr{\zeta(\pi)},
}
where
\alns{ 
\zeta(\pi) = -t\pbr{(1-\eps)\binom{n}{2}ps^2 - 2\sbr{\binom{\gamma n}{2} - \gamma (1-\gamma) n^2}} + \frac{1}{2}\binom{n}{2}\log(L_2) + \frac{1}{2}\pbr{\binom{n_1^{\pi}}{2} + n_2^{\pi}}\log\pbr{\frac{L_1^2}{L_2}}
}
Let $\beta_{\pi}$ denote the fraction of fixed points in the permutation $\pi$. By definition, $n_1^{\pi} = \beta_{\pi} n$. Letting $\tp = p(1-\eps)$ and using the fact that $n_2^\pi \leq n - n_1^{\pi}$, it follows that
\aln{ 
\zeta(\pi) &\leq \pbr{-\frac{t}{2} \pbr{\tp s^2 + 2\gamma^2 - 4\gamma} + \frac{1}{4} \log(L_2) + \frac{\beta_{\pi}^2}{4}\log\pbr{\frac{L_1^2}{L_2}}}  n^2 \label{eq: nsquared} \\
& +  \pbr{ - \frac{t}{2} \pbr{2\gamma - \tp s^2} - \frac{1}{4}\log(L_2) + \frac{2-3\beta_{\pi}}{4}\log\pbr{\frac{L_1^2}{L_2}}}  n 
}
Next, it is shown that if~\eqref{eq: condition-SCER-lin} is satisfied, then there exists $\delta > 0$ such that coefficient of the $n^2$ term in~\eqref{eq: nsquared} is less than or equal to $-\delta$. To that end, notice that this coefficient is strictly negative when
\aln{ 
 \tp s^2 + 2\gamma^2 - 4\gamma > \inf_{t > 0} \cbr{ \frac{1}{t} \pbr{\frac{1}{2} \log(L_2) + \frac{\beta_{\pi}^2}{2}\log\pbr{\frac{L_1^2}{L_2}}} } = \inf_{t> 0}\cbr{\log \pbr{ L_1 ^{\frac{\beta_{\pi}^2}{t}} \cdot L_2^{\frac{1-\beta_{\pi}^2}{2t}} } } \label{eq: Condition}
}
Therefore,~\eqref{eq: Condition} holds if
\aln{ 
\exp\pbr{\tp s^2 + 2\gamma^2 - 4\gamma} &> \exp \pbr{ \inf_{t > 0} \log\pbr{ L_1^{\frac{\beta_{\pi}^2}{t}} \cdot L_2^{\frac{1-\beta_{\pi}^2}{2t}}}} \label{eq: condition_exponent}\\ 
&= \inf_{t>0} L_1^{\frac{\beta_{\pi}^2}{t}} \cdot L_2^{\frac{1-\beta_{\pi}^2}{2t}}  \label{eq: condition_exponent2}
}
since $L_1, L_2 \geq 1$ and therefore the objective function being infimized in~\cref{eq: condition_exponent} is always non-negative. 
Observing that the objective function for the infimization in~\eqref{eq: condition_exponent2} is monotonically increasing in $t$, and substituting for $L_1$ and $L_2$ from~\Cref{cor: L1L2}, it follows that
\alns{
\eqref{eq: condition_exponent2} &= \lim_{t\to 0}\cbr{ \pbr{1 \!-\!ps \!+\! ps(1\!-\!s\!+\!se^t)}^{\frac{\beta_{\pi}^2}{t}} \pbr{(1\!-\!ps)^2 \!+\! 2ps\pbr{1\!-\!p\!+\!p(1\!-\!s)^2 + ps(1\!-\!s)e^t} \!+\! \pbr{ps(1\!-\!s\!+\!se^t)}^2}^{\frac{1-\beta_{\pi}^2}{2t}} }\\
& = \exp\pbr{ps^2 \pbr{p + \beta_{\pi}^2 - p\beta_{\pi}^2}},
}
i.e. the condition~\eqref{eq: Condition} holds if $\exp\pbr{\tp s^2 + 2\gamma^2 - 4\gamma} > \exp\pbr{ps^2 \pbr{p + \beta_{\pi}^2 - p\beta_{\pi}^2}}$. Rewriting this as a condition on $\gamma$ yields that~\cref{eq: Condition} holds whenever 
\aln{ 
\gamma < 1 - \sqrt{1 - \frac{ps^2\pbr{ (1-p)(1-\beta_{\pi}^2) - \eps}}{2} }. \label{eq: condition-SCER-lin-true}
}
Indeed, since $\beta_{\pi} \leq\alpha$ for all $\pi \in \T^{\leq\alpha}$, it follows that whenever~\eqref{eq: condition-SCER-lin} holds, $\eps$ can be chosen sufficiently small so that~\eqref{eq: condition-SCER-lin-true} is satisfied for all $\pi \in \T^{\leq\alpha}$. It follows that there exists $\delta > 0$ such that the coefficient of the $n^2$ term in~\eqref{eq: nsquared} is less than or equal to $-\delta$. Therefore, for sufficiently large $n$:
\alns{ 
\zeta(\pi) &\leq - \frac{\delta}{2} n^2  \leq -\frac{1}{2}\pbr{\beta_{\pi}+ 1}n\log(n).
}
Combining all the above, 
\alns{ 
p_4 &\leq  \sum_{k=0}^{\alpha n} \sum_{\pi \in \T^{k/n}}\P{ X(\pi) \geq \binom{n}{2}\tp s^2 - 2\sbr{\binom{\gamma n}{2} - \gamma(1-\gamma)n^2}} \\
& \leq \sum_{k=0}^{\alpha n} n^k \exp \pbr{- \frac{1}{2}\pbr{\frac{k}{n} + 1} n\log(n)} \\
& = \sum_{k=0}^{\alpha n} \exp \pbr{-\frac{1}{2}(n-k)\log n} \\
& = \sum_{k=(1-\alpha )n}^{n} \exp \pbr{ - \frac{1}{2} k \log n} \\
& \leq \frac{\exp\pbr{-\frac{1-\alpha}{2}n\log n}}{1 - \exp\pbr{-\frac{1}{2}\log n}} \\
& = o(1)
}
as desired. This concludes the proof.
\end{proof}

\section{The \texorpdfstring{$\SCER$}{SCG} Model when both networks are compromised} \label{apx: adv}
In this section, the maximum overlap estimator is analyzed for the case when the adversary corrupts the same number of nodes in both graphs, i.e. $\spl = 1/2$. The action of the adversary is described by Algorithm~\ref{Alg: Overwhelm_Adversary}. A similar adversary can cause the maximum overlap estimator to fail horribly when $\spl \in (0,1)$ but $\spl \neq 1/2$. However, the setting of $\spl = 1/2$ conveys the main ideas without complicating notation and will be the focus of the analysis.

\begin{theorem}
Let $(\B_1,\B_2,\tG_1,\tG_2,\id)$ be distributed according to $\SCER(n,p,s,\gamma,1/2, \mathsf{A})$, where the adversary $\mathsf{A}$ is described in Algorithm~\ref{Alg: Overwhelm_Adversary}. Let $\pihatMO$ denote the matching output by the maximum overlap estimator $\widehat{\mathcal{E}}_{\mathsf{MO}}(\tG_1,\tG_2)$. Then, for any $\eps > 0$, 
\alns{ 
\P{ \ov(\pihatMO, \id) \geq \eps n } = o(1).
}
\end{theorem}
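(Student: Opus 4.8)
The plan is to exploit the very specific structure that the adversary of Algorithm~\ref{Alg: Overwhelm_Adversary} plants. Write $D_1:=\{\{i,j\}:i\in\B_1,\ j\in\mathcal{G}_1\}$ and $D_2:=\{\{i,j\}:i\in\B_2,\ j\in\mathcal{G}_2\}$; the adversary forces \emph{every} pair of $D_1$ to be an edge of $\tG_1$ and every pair of $D_2$ to be an edge of $\tG_2$, while leaving all other pairs equal to the corresponding pair of $G_1$, resp.\ $G_2$. With $g:=|\B_1|=|\B_2|=\gamma n/2$ and $h:=|\mathcal{G}_1|=|\mathcal{G}_2|=(1-\gamma)n/2$ (I ignore integrality issues, which cost only $O(n)$), we have $|D_1|=|D_2|=gh=\tfrac{\gamma(1-\gamma)}{4}n^2$, whereas --- since $p=C\log n/n$ --- the honest part of each graph has only $O(n\log n)$ edges. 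The first step is a deterministic lower bound on the maximum overlap: take any permutation $\mu_0$ that maps $\B_1$ bijectively onto $\B_2$, maps $\mathcal{G}_1$ bijectively onto $\mathcal{G}_2$, and acts as the corresponding inverse on $\B_2\cup\mathcal{G}_2$. Then $\mu_0$ has no fixed point and sends each pair of $D_1$ to a pair of $D_2$, so $\tX(\mu_0)\ge|D_1|=gh$ surely, and hence $\tX(\pihatMO)\ge gh$ (recall $\pihatMO$ may be taken to be a permutation on $[n]$).

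The core step is to show that, with high probability, every permutation $\pi$ with at least $\eps n$ fixed points has $\tX(\pi)<gh$. Given $\pi$, I classify where $\B_1$ and $\mathcal{G}_1$ are sent: let $a,b$ count the $i\in\B_1$ with $\pi(i)\in\B_2$, resp.\ $\pi(i)\in\mathcal{G}_2$, and let $c,d$ count the $j\in\mathcal{G}_1$ with $\pi(j)\in\B_2$, resp.\ $\pi(j)\in\mathcal{G}_2$. The number of $D_1$-pairs that $\pi$ maps into $D_2$ is then exactly $ad+bc$; since the induced map on pairs is injective, each $D_1$-pair mapped to a $\tG_2$-edge outside $D_2$ is charged to a distinct element of $E(\tG_2)\setminus D_2$, and each $\tG_1$-edge outside $D_1$ contributes at most one, so with $W:=|E(\tG_1)\setminus D_1|+|E(\tG_2)\setminus D_2|$ we obtain $\tX(\pi)\le ad+bc+W$ for every permutation $\pi$. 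Now, because $\B_1,\mathcal{G}_1,\B_2,\mathcal{G}_2$ partition $[n]$, at least one of these four sets contains $\ge\eps n/4$ fixed points of $\pi$, and counting a fixed point as an element that does not leave its set yields the linear constraints $a+b\le g-f_{\B_1}$, $c+d\le h-f_{\mathcal{G}_1}$, $a+c\le g-f_{\B_2}$, and $b+d\le h-f_{\mathcal{G}_2}$, where $f_X$ denotes the number of fixed points of $\pi$ in $X$. Using $ad+bc\le(a+b)(c+d)$ and $ad+bc\le(a+c)(b+d)$ (the omitted cross terms are nonnegative) gives $ad+bc\le\min\{(g-f_{\B_1})(h-f_{\mathcal{G}_1}),\,(g-f_{\B_2})(h-f_{\mathcal{G}_2})\}$, and checking the four cases for which set carries the $\ge\eps n/4$ fixed points yields $ad+bc\le gh-\delta n^2$ with $\delta:=\tfrac{\eps}{8}\min(\gamma,1-\gamma)>0$.

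To finish, I bound $W$: clearly $|E(\tG_1)\setminus D_1|\le|E(G_1)|$ and $|E(\tG_2)\setminus D_2|\le|E(G_2)|$, and each of $|E(G_1)|,|E(G_2)|$ is $\mathsf{Bin}(\binom{n}{2},ps)$ with mean $\Theta(n\log n)$, so \Cref{lem: concentration-of-binomial} gives $\P{W>n\log^2 n}=o(1)$. On that event, every $\pi$ with at least $\eps n$ fixed points satisfies $\tX(\pi)\le gh-\delta n^2+n\log^2 n<gh\le\tX(\mu_0)$ for all large $n$, so no such $\pi$ maximizes $\tX$; since $\pihatMO$ does, its number of fixed points --- which equals $\ov(\pihatMO,\id)$ --- is less than $\eps n$, and therefore $\P{\ov(\pihatMO,\id)\ge\eps n}=o(1)$. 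The one genuinely delicate step is the middle one: extracting the four linear constraints from the fixed-point structure and bounding the bilinear form $ad+bc$ by $gh-\delta n^2$; the reference-matching lower bound via $\mu_0$ and the concentration of $W$ are routine.
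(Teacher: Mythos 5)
Your proof is correct, and it takes a genuinely different route from the paper. The paper bounds $\tX(\pi')$ through a degree decomposition: it writes $\tX(\pi')\le \tfrac12\sum_i \min(\deg_{\tG_1}(i),\deg_{\tG_2}(\pi'(i)))$, splits the sum by which of $\B_1,\B_2,\G_1,\G_2$ the node $i$ and its image $\pi'(i)$ live in, controls each piece with the degree increments coming from the adversary (Remark~\ref{rem: deg}), and finally needs a combinatorial claim (Claim~\ref{claim: e+d<2}) that the fractions $\eps_1(\pi')+\delta_1(\pi')$ stay bounded away from $2$; that argument requires concentration of the \emph{maximum} degree in each $G_j$. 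You instead decompose $\tX(\pi)$ at the level of edge pairs rather than node degrees: you isolate the planted sets $D_1,D_2$, observe that the planted-to-planted overlap is \emph{exactly} the bilinear form $ad+bc$, bound everything else deterministically by $W=|E(\tG_1)\setminus D_1|+|E(\tG_2)\setminus D_2|$, and then kill $ad+bc$ with the four injectivity/fixed-point constraints $a+b\le g-f_{\B_1}$, $c+d\le h-f_{\G_1}$, $a+c\le g-f_{\B_2}$, $b+d\le h-f_{\G_2}$ and the elementary inequalities $ad+bc\le(a+b)(c+d)$, $ad+bc\le(a+c)(b+d)$. This gives an explicit quantitative deficit $\delta n^2$ with $\delta=\tfrac{\eps}{8}\min(\gamma,1-\gamma)$ (valid for $\gamma\in(0,1)$, which is implicit in the statement since otherwise the adversary does nothing), and the only probabilistic input you need is concentration of the total edge count --- strictly weaker than the paper's max-degree control. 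The edge-counting route is shorter, makes the combinatorial bottleneck crisply visible, and in particular replaces the paper's somewhat indirect Claim~\ref{claim: e+d<2} (which establishes strictness but not a uniform gap in a single stroke) with a transparent linear-programming bound. Both proofs, like the discussion preceding the theorem, implicitly use $p=C\log n/n$ to control the honest edge count.
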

\begin{proof}

Assume without loss of generality that $\gamma \leq 1/2$, since the same argument works by appropriately interchanging the role of $\gamma$ and $1-\gamma$. For any permutation $\pi$, let $X(\pi)$ denote the number of edges in the intersection graph $G_1\wedge_{\pi}G_2$. Similarly, let $\tX(\pi)$ denote the number of edges in the intersection graph $\tG_1\wedge_{\pi}\tG_2$. For $j\in \{1,2\}$, let $E(\tG_j)$ denote the set of edges present in $\tG_j$, and furthermore, let $\widetilde{E}(\tG_j) \subseteq E(\tG_j)$ denote the edges added by the adversary in Algorithm~\ref{Alg: Overwhelm_Adversary}.

Let $\tpi$ be a matching with $\dom(\tpi) = [n]$ such that
\alns{ 
\tpi(i) \in 
\begin{cases} 
\B_2, & i \in \B_1 \\
\B_1, & i \in \B_2 \\
\G_2, & i \in \G_1 \\
\G_1, & i \in \G_2
\end{cases},
}
where $\G_1$, $\G_2$ are as defined in Algorithm~\ref{Alg: Overwhelm_Adversary}. Since $|\B_1| = |\B_2|$ and $|\G_1| = |\G_2|$, such a matching exists. Note that $\ov(\tpi,\id) = 0$. First, a lower bound on $\tX(\tpi)$ is computed.
\alns{ 
\tX(\tpi) &= \left|E\pbr{\tG_1\wedge_{\tpi}\tG_2}\right|  \\
& \geq \left| \widetilde{E}\pbr{\tG_1\wedge_{\tpi} \tG_2}\right| \\
& \stackrel{\text{(a)}}{=} \left| \B_1 \right| \times \left| \G_1 \right| \\
&= \frac{\gamma n}{2} \times \frac{(1-\gamma)n}{2} \\
& = \frac{\gamma(1-\gamma)}{4}\times n^2,
}
where (a) follows from the fact that each node in $\B_1$ connects to every node in $\G_1$ in the graph $\tG_1\wedge_{\tpi}\tG_2$. 

Thus, in order to complete the proof, it suffices to show that
\aln{ 
\P{ \bigcup_{\eps \in (0,1]} \bigcup_{\substack{\pi \\ \ov(\pi,\id) = \eps n} } \tX(\pi) > \frac{\gamma(1-\gamma)}{4}\times n^2} = o(1). \label{eq: STP-adv}
}

A union bound argument is presented below to show~\eqref{eq: STP-adv}. Before proceeding, let us collect some observations about the degrees of nodes in $\tG_1$ and $\tG_2$.
\begin{remark} \label{rem: deg}
Let $\B_1,\B_2,\G_1,\G_2$ be as defined in Algorithm~\ref{Alg: Overwhelm_Adversary}. Since the adversary only adds edges and does not remove any edges,
\alns{ 
\deg_{\tG_1}(i) \leq 
\begin{cases}
 \deg_{G_1}(i) + \frac{(1-\gamma)n}{2}, & i \in \B_1 \\
 \deg_{G_1}(i) + \frac{\gamma n}{2}, & i \in \G_1 \\
 \deg_{G_1}(i), & i \in \B_2 \cup \G_2
\end{cases}
}
and 
\aln{ 
\deg_{\tG_2}(j) \leq 
\begin{cases}
 \deg_{G_2}(j) + \frac{(1-\gamma)n}{2}, & j \in \B_2 \\
 \deg_{G_2}(j) + \frac{\gamma n}{2}, & j \in \G_2 \\
 \deg_{G_2}(j), & j \in \B_1 \cup \G_1
\end{cases}
}
\end{remark}

Continuing, let $([n],\pi')$ be a matching such that $\ov(\pi',\id) = \eps n$. Then,
\aln{ 
\tX(\pi') & = \frac{1}{2} \sum_{i\in[n]} \deg_{\tG_1\wedge_{\pi'}\tG_2} (i) \leq \frac{1}{2} \sum_{i\in [n]} \min\pbr{\deg_{\tG_1}(i), \deg_{\tG_2}(\pi'(i))}  = \frac{1}{2} \pbr{Z_1 + Z_2}, \label{eq: ZX}
}
where
\alns{
Z_1 &:= \sum_{ \substack{ i \in[n] \\ \pi'(i) = i}} \min\pbr{\deg_{\tG_1}(i), \deg_{\tG_2}(i)}, \\
Z_2 &:= \sum_{ \substack{ i \in[n] \\ \pi'(i) \neq i}} \min\pbr{\deg_{\tG_1}(i), \deg_{\tG_2}(\pi'(i))}
}
First, the term $Z_1$ is analyzed. Clearly, for any $i\in[n]$,~\Cref{rem: deg} yields
\alns{ 
\min\pbr{\deg_{\tG_1}(i), \deg_{\tG_2}(i)} \leq  \min\pbr{\deg_{G_1}(i), \deg_{G_2}(i)} ,
}
and so it follows that 
\aln{ 
Z_1 \leq \eps n \times \max_{i \in [n]} \pbr{ \min\pbr{\deg_{G_1}(i), \deg_{G_2}(i)} } \leq  n \times \max_{i\in[n]} \deg_{G_1}(i).\label{eq: Z1-bound}
}
Next, the term $Z_2$ is analyzed. Notice that it can be upper bounded as
\aln{ 
Z_2 \leq Z_{2,1} + Z_{2,2} + Z_{2,3} + Z_{2,4}, \label{eq: Z2-split}
}
where each term is defined and then bounded using~\Cref{rem: deg} as
\aln{ 
Z_{2,1} &:= \sum_{ \substack{i \in [n]  \\ i \in \B_2 \cup \G_2}} \min\pbr{\deg_{\tG_1}(i), \deg_{\tG_2}(\pi'(i))} \leq \left| \B_2\cup\G_2\right| \max_{i\in [n]} \pbr{ \deg_{G_1}(i)} \leq n \max_{i \in[n]} \pbr{\deg_{G_1}(i)},  \\
Z_{2,2} &:= \!\!\!\!\sum_{ \substack{i \in [n] \\ \pi'(i) \in \B_1 \cup \G_1 }} \!\!\!\!\min\pbr{\deg_{\tG_1}(i), \deg_{\tG_2}(\pi'(i))} \leq\left| \B_1\cup\G_1\right| \max_{\pi'(i) \in [n]} \pbr{ \deg_{G_2}(\pi'(i))} \leq n \max_{i \in[n]} \pbr{\deg_{G_2}(i)}, \\
Z_{2,3} &:=\!\!\!\!\sum_{ \substack{i \in [n] \\ i \in \B_1 \\ \pi'(i) \in \B_2 \cup \G_2 }} \!\!\!\!\min\pbr{\deg_{\tG_1}(i), \deg_{\tG_2}(\pi'(i))} \leq \left|\cbr{i: i \in \B_1, \pi'(i)\in\B_2 \!\cup\! \G_2 } \right| \sbr{ \frac{(1\!-\!\gamma )n}{2} + \max_{i\in [n]}\pbr{ \deg_{G_1}(i)}}, \label{eq: Z23}\\
Z_{2,4} &:=\!\!\!\!\sum_{ \substack{i \in [n] \\ i \in \G_1 \\ \pi'(i) \in \B_2 \cup \G_2 }} \!\!\!\!\min\pbr{\deg_{\tG_1}(i), \deg_{\tG_2}(\pi'(i))} \leq \left|\cbr{i: i \in \G_1, \pi'(i)\in\B_2 \!\cup\! \G_2 } \right| \sbr{ \frac{\gamma n}{2} + \max_{i\in [n]}\pbr{ \deg_{G_1}(i)}}. \label{eq: Z24}
}
Let $\eps_1(\pi')$ denote the fraction of nodes $i$ in $\B_1$ such that $\pi'(i) \in \B_2 \cup \G_2$. Similarly, let $\delta_1(\pi')$ denote the fraction of nodes $j$ in $\G_1$ such that $\pi'(j) \in \B_2\cup\G_2$. Therefore,~\eqref{eq: Z23} and~\eqref{eq: Z24} can be written as
\aln{ 
Z_{2,3} &\leq \eps_1(\pi') |\B_1| \pbr{\frac{(1\!-\!\gamma )n}{2} + \max_{i\in [n]}\pbr{ \deg_{G_1}(i)}}  \leq \eps_1(\pi') \frac{\gamma(1-\gamma)}{4}n^2 + n\max_{i\in[n]}\pbr{\deg_{G_1}(i)}, \\
Z_{2,4} &\leq \delta_1(\pi') |\G_1| \pbr{ \ \ \ \frac{\gamma n}{2} \ \ \ \ + \max_{i\in [n]}\pbr{ \deg_{G_1}(i)}}  \leq \delta_1(\pi') \frac{\gamma(1-\gamma)}{4}n^2 + n\max_{i\in[n]}\pbr{\deg_{G_2}(i)}. \label{eq: Z24-bound}
}
Therefore, combining~\eqref{eq: Z2-split}-\eqref{eq: Z24-bound} yields
\aln{ 
Z_2 \leq 2n \max_{i\in[n]}\pbr{\deg_{G_1}(i)} + 2n \max_{i\in[n]}\pbr{\deg_{G_2}(i)} + \pbr{\eps_1(\pi') + \delta_1(\pi')} \frac{\gamma(1-\gamma)}{4} \times n^2 \label{eq: Z2-bound}
}
Substituting~\eqref{eq: Z1-bound} and~\eqref{eq: Z2-bound} in~\eqref{eq: ZX}, 
\aln{ 
\tX(\pi') \leq \frac{1}{2}\pbr{Z_1 + Z_2} \leq \frac{1}{2}\pbr{3n \max_{i\in[n]}\deg_{G_1}(i) + 2n \max_{i\in[n]}\pbr{\deg_{G_2}(i)} + \pbr{\eps_1(\pi') + \delta_1(\pi')} \frac{\gamma(1-\gamma)}{4} \times n^2}. \label{eq: ZZ}
}
The following claim is made next.
\begin{claim} \label{claim: e+d<2}
Let $\eps > 0$ and $\pi'$ be any permutation such that $\ov(\pi',\id) \geq \eps n$. Let $\eps_1$ and $\delta_1$ be as defined above. Then, $\eps_1(\pi') + \delta_1(\pi') < 2$.
\end{claim}
\begin{proof}
    The proof of the claim relies on a simple observation: If node $i$ is not a fixed point of $\pi'$, then neither is the node $\pi'(i)$. Specifically, for each node $i \in \B_1\cup\G_1$ such that $\pi'(i) \in \B_2\cup\G_2$, there exists a unique node $k = \pi'(i)$ in $\B_2\cup\G_2$ such that $\pi'(k)\neq k$. This is true simply because $\pi'$ is a permutation and therefore bijective.

    Assume to the contrary that $\eps_1(\pi') + \delta_1(\pi') = 2$. Since $\eps_1(\pi')$ and $\delta_1(\pi')$ are fractions, our assumption also implies that $\eps(\pi') = 1$ and $\delta_1(\pi') = 1$. Since the number of wrongly matched nodes in $\B_1$ (resp. $\G_1$) is at least as large as the number of nodes in $\B_1$ (resp. $\G_1$) that are mapped to $\B_2\cup\G_2$, it follows that
    \alns{ 
    \left|\cbr{i \in [n]: \pi'(i) \neq i}\right| &\geq 2 \pbr{ \eps_1(\pi')|\B_1| + \delta_1(\pi')|\G_1| } \label{eq: factor-of-2}\\
    &= 2 \pbr{|\B_1| + |\G_1|} \\
    & = n,
    }
    which contradicts the fact that $\ov(\pi', \id) \geq \eps n$. Here, the factor of $2$ in~\eqref{eq: factor-of-2} is due to the aforementioned observation. It is concluded that $\eps_1(\pi') + \delta_1(\pi') < 2$.
\end{proof}
\Cref{claim: e+d<2} confirms the existence of a small positive constant $\eps'$ such that $\eps_1(\pi') + \delta_1(\pi') \leq 2 - \eps'$. Combining this with~\eqref{eq: ZZ} and using the fact that $2n\leq 3n$ for all $n$  yields
\aln{ 
\tX(\pi') \leq \frac{3n}{2} \pbr{\max_{i\in[n]}\deg_{G_1}(i) + \max_{i\in[n]}\deg_{G_2}(i)} + (1- \eps'/2)\times\pbr{ \frac{\gamma(1-\gamma)}{4} \times n^2}.
}
Therefore,
\alns{ 
\P{ \tX(\pi') > \frac{\gamma(1-\gamma)}{4}\times n^2} &\leq \P{ \frac{3n}{2} \pbr{\max_{i\in[n]}\deg_{G_1}(i) + \max_{i\in[n]}\deg_{G_2}(i)} > \frac{\eps'}{2}\times \frac{\gamma(1-\gamma)}{4}\times n^2 } \\
& = \P{ \max_{i\in[n]}\deg_{G_1}(i) + \max_{i\in[n]}\deg_{G_2}(i) > \frac{\gamma(1-\gamma)}{12}\times \eps 'n } \\
& \leq \P{ \max_{i\in[n]}\deg_{G_1}(i) > \frac{\gamma(1-\gamma)}{24} \times \eps' n} + \P{ \max_{i\in[n]}\deg_{G_2}(i) > \frac{\gamma(1-\gamma)}{24} \times \eps' n} \\
& \stackrel{\text{(a)}}{\leq} 2n \times \P{ \mathsf{Bin}(n,ps^2) > \frac{\gamma(1-\gamma)}{24}\times \eps' n}, \\
& \stackrel{\text{(b)}}{\leq} 2n \times \P{ \mathsf{Bin}(n,ps^2) > n}
}
where (a) uses a union bound, the stochastic dominance of $\mathsf{Bin}(n,ps^2)$ over $\mathsf{Bin}(n-1,ps^2)$, and (b) uses the fact that $\eps'\gamma(1-\gamma)/24 < 1$. We use the tail bound on the Binomial distribution~\cite{mitzenmacher2017} 
\alns{ 
\P{\mathsf{Bin}(n,ps^2) > (1+\delta)nps^2} \leq \exp\pbr{-nps^2\delta^2/3},
}
with $\delta = \frac{n}{Cs^2\log(n)}-1$ to get
\alns{ 
\P{ \tX(\pi') > \frac{\gamma(1-\gamma)}{4}\times n^2} \leq 2n \times \P{ \mathsf{Bin}(n,ps^2) > n} \leq 2n \times \exp\pbr{ - \frac{n^2}{3Cs^2\log(n)} }.
}
It remains to perform a union bound over all $\pi'$ such that $\ov(\pi',\id) > \eps n$. However, the number of such permutations is trivially upper bounded by $n! < \exp\pbr{n^{1.1}\log(n)}$ for all sufficiently large $n$. Therefore,
\alns{ 
\P{ \bigcup_{\eps \in (0,1]} \bigcup_{\substack{\pi \\ \ov(\pi,\id) = \eps n} } \tX(\pi) > \frac{\gamma(1-\gamma)}{4}\times n^2} \leq 2n \exp\pbr{n^{1.1}\log(n)} \times \exp\pbr{ - \frac{n^2}{3Cs^2\log(n)} } = o(1), 
}
as desired. This completes the proof.
\end{proof}

\end{document}